\newtheorem{lem}{Lemma}[section]
\newtheorem{dfn}[lem]{Definition}
\newtheorem{pro}[lem]{Proposition}
\newtheorem{thm}[lem]{Theorem}
\newtheorem{cor}[lem]{Corollary}
\newtheorem{rem}[lem]{Remark}
\def\F{\mathcal F}
\def\N{\mathbb N}
\def\ep{\varepsilon}
\def \bt{\beta}
\newcommand\hdim{\dim_{\mathrm H}}
\def\Z{\mathbb Z}
\def\H{\mathcal H}
\def\L{\mathcal L}
\def\mR{\mathcal R}
\def\mQ{\mathcal Q}
\def\hv{\hat{v}}
\begin{document}
\baselineskip 14pt

\title{Uniform Diophantine approximation on the plane for $\beta$-dynamical systems}

\author{Xiaohui Fu$^1$, Junjie Shi$^2$ and Chen Tian$^{3,\ast}$}

\address{$^1$School of Mathematics and Statistics, Huazhong University of Science and Technology, Wuhan 430074, P. R. China}
\email{xhfu@hust.edu.cn}

\address{$^2$Department of Mathematics, Taiyuan University, Taiyuan 030000, P. R. China}
\email{junjie\_shi2025@163.com}

\address{$^3$School of Statistics and Mathematics, Hubei University of Economics, Wu Han, Hubei 430205, P. R. China}
\email{tchen@hbue.edu.cn}

\thanks{$^{\ast}$ Corresponding author.}

\keywords {Beta dynamical systems; Uniform Diophantine approximation; Asymptotic Diophantine approximation; Hausdorff dimension.}

\subjclass{Primary 11K55; Secondary 28A80, 11J83}

%\title[Uniform Diophantine approximation on the plane for $\beta$-dynamical systems]% end with percent

%111\title{Uniform Diophantine approximation on the plane for $\beta$-dynamical systems} % This is the full title of the paper
% Use lowercase letters in title except for proper names
% Avoid equations in title if possible
% Do not use the \thanks{} command; use \extraline{} instead (see below).

%1111\author{Xiaohui Fu$^1$, Junjie Shi$^2$ and Chen Tian$^{3,\ast}$}

%1111\address{$^1$School of Mathematics and Statistics, Huazhong University of Science and Technology, Wuhan 430074, P. R. China}
%\email{xhfu@hust.edu.cn}

%\address{$^2$School of Department of Mathematics, Taiyuan University, Taiyuan 030000, P. R. China}
%\email{junjie\_shi@hust.edu.cn}

%\address{$^3$School of Statistics and Mathematics, Hubei University of Economics, Wu Han, Hubei 430205, P. R. China}
%\email{tchen@hbue.edu.cn}

\begin{abstract}
In this paper, we investigate the two-dimensional uniform Diophantine approximation in $\beta$-dynamical systems. Let $\beta_i > 1(i=1,2)$ be real numbers, and let $T_{\beta_i}$ denote the $\beta_i$-transformation defined on $[0, 1]$. For each $(x, y) \in[0,1]^2$, we define the asymptotic approximation exponent
$$
v_{\beta_1, \beta_2}(x, y)=\sup \left\{0 \leq v<\infty: \begin{array}{l}
T_{\beta_1}^n x<\beta_1^{-n v} \\
T_{\beta_2}^n y<\beta_2^{-n v}
\end{array} \text { for infinitely many } n \in \mathbb{N}\right\} \text {, }
$$
and the uniform approximation exponent
$$
\hat{v}_{\beta_1, \beta_2}(x, y)=\sup \left\{0 \leq \hat{v}<\infty: \forall~ N \gg 1, \exists 1 \leq n \leq N \text { such that } \begin{array}{l}
T_{\beta_1}^n x < \beta_1^{-N \hat{v}} \\
T_{\beta_2}^n y < \beta_2^{-N \hat{v}}
\end{array}\right\} .
$$
We calculate the Hausdorff dimension of the intersection $$\left\{(x, y) \in[0,1]^2: \hat{v}_{\beta_1, \beta_2}(x, y)=\hat{v} \text { and } v_{\beta_1, \beta_2}(x, y)=v\right\}$$ for any $\hat{v}$ and $v$ satisfying $\log _{\beta_2}{\beta_1}>\frac{\hat{v}}{v}(1+v)$. As a corollary, we establish a definite formula for the Hausdorff dimension of the level set of the uniform approximation exponent.
\end{abstract}
\maketitle
%\linenumbers %%bian hanghao
\section{Introduction}
The classical Diophantine approximation aims to quantify  how closely real numbers can be approximated by rational numbers. A qualitative result is provided by the fact that the rational numbers are dense in the set of real numbers. Dirichlet's theorem is a fundamental quantitative result in this context.
\begin{thm}[\cite{DP42}]\label{Dirichlet them}
For any $x \in \mathbb{R}$ and $Q>1$, there exists an integer $q \in  \mathbb{N}$ such that
\begin{equation*}
\|q x\| \leq \frac{1}{Q} \text { and } q<Q .
\end{equation*}
Here $\|\cdot\|$ denotes the distance to the nearest integer, i.e., $\|x\|=\min_{n\in \Z} |x-n|.$
\end{thm}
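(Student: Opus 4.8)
The plan is to deduce this from the pigeonhole principle, in the sharp form that also forces the strict inequality $q<Q$. Throughout write $\{t\}=t-\lfloor t\rfloor\in[0,1)$ for the fractional part of $t\in\R$, so that $\|t\|=\min\{\{t\},\,1-\{t\}\}$.

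First I would fix the pigeonhole parameter. Set $M=\lceil Q\rceil-1$; since $Q>1$ this is a positive integer, and because $q$ is required to be a positive integer the condition $q<Q$ is equivalent to $q\le M$. Moreover $M+1=\lceil Q\rceil\ge Q$, so $\tfrac1{M+1}\le\tfrac1Q$. Hence it suffices to exhibit an integer $q\in\{1,\dots,M\}$ with $\|qx\|\le\tfrac1{M+1}$, and the statement follows at once.

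The main step is the pigeonhole argument itself. Consider the $M+2$ numbers $\{0\cdot x\}=0,\ \{x\},\ \{2x\},\ \dots,\ \{Mx\}$ together with the point $1$, all lying in $[0,1]$, and the $M+1$ closed intervals $\big[\tfrac{j}{M+1},\tfrac{j+1}{M+1}\big]$ for $j=0,\dots,M$, whose union is $[0,1]$. The points $0$ and $1$ lie in distinct intervals, so pigeonhole forces two of the $M+2$ listed numbers into a common interval, and since $0,1$ cannot themselves be that pair, it must be either $\{jx\},\{ix\}$ for some $0\le i<j\le M$, or $\{ix\},1$ for some $1\le i\le M$. In the first case, set $q=j-i\in\{1,\dots,M\}$ and $p=\lfloor jx\rfloor-\lfloor ix\rfloor\in\Z$; then $qx-p=\{jx\}-\{ix\}$, so $\|qx\|\le|qx-p|=\bigl|\{jx\}-\{ix\}\bigr|\le\tfrac1{M+1}$. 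In the second case, $\{ix\}$ and $1$ lie in the same interval, so $1-\{ix\}\le\tfrac1{M+1}$ and hence $\|ix\|\le 1-\{ix\}\le\tfrac1{M+1}$; take $q=i$. In either case $q\in\{1,\dots,M\}$ and $\|qx\|\le\tfrac1{M+1}$, which by the previous paragraph gives $\|qx\|\le\tfrac1Q$ and $q<Q$.

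I expect the only genuinely delicate point to be the endpoint bookkeeping. Running the pigeonhole with just the $M+1$ fractional parts $\{0\},\dots,\{Mx\}$ against $M$ intervals of length $1/M$ yields only $\|qx\|<1/M$ with $q\le M$, which is too weak when $M<Q$; adjoining the endpoints $0$ and $1$ and passing to $M+1$ intervals of length $1/(M+1)$ is exactly what sharpens the conclusion to $\|qx\|\le 1/(M+1)\le 1/Q$ while still keeping $q\le M<Q$, at the price of the short case distinction above. The remaining verifications — the identity $qx-p=\{jx\}-\{ix\}$ and the comparison $1/(M+1)\le 1/Q$ — are routine.
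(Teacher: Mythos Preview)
Your pigeonhole argument is correct, including the endpoint bookkeeping with $M=\lceil Q\rceil-1$ that secures both $q\le M<Q$ and $\|qx\|\le\tfrac{1}{M+1}\le\tfrac{1}{Q}$; the case split between two fractional parts colliding and a fractional part colliding with the point $1$ is handled cleanly. The paper itself does not prove this theorem at all---it is simply quoted as Dirichlet's classical result with a citation to \cite{DP42}---so there is nothing to compare your argument against.
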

As an application, we have the following corollary.  It should be noted that this corollary was established before Dirichlet's theorem (see Legendre's book \cite{LE08}).
\begin{cor}[\cite{LE08}]\label{Dirichlet cor}
For any $x \in \mathbb{R}$, there exist infinitely many integers $q$ such that
\begin{equation*}
\|q x\| <\frac{1}{q}.
\end{equation*}
\end{cor}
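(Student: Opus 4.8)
The plan is to deduce Corollary~\ref{Dirichlet cor} from Dirichlet's theorem (Theorem~\ref{Dirichlet them}) by invoking the latter along a carefully chosen sequence of thresholds $Q$. The core observation is that whenever $q\in\N$ and $Q>1$ satisfy $\|qx\|\le 1/Q$ together with $q<Q$, we automatically get $\|qx\|\le 1/Q<1/q$, because $q<Q$ forces $1/Q<1/q$. Thus each single application of Theorem~\ref{Dirichlet them} already yields an integer $q$ with $\|qx\|<1/q$; the only genuine task is to ensure that, as $Q$ grows, infinitely many \emph{distinct} such $q$ occur.

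I would split according to whether $x$ is rational. If $x=a/b$ with $b\in\N$, then for every $n\in\N$ the integer $q=nb$ satisfies $qx=na\in\Z$, hence $\|qx\|=0<1/(nb)=1/q$; this exhibits infinitely many admissible $q$ directly, with no appeal to Dirichlet's theorem needed.

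If $x$ is irrational, I would construct the required integers inductively. Begin with $Q_1=2$ and apply Theorem~\ref{Dirichlet them} to obtain $q_1\in\N$ with $q_1<Q_1$ and $\|q_1x\|\le 1/Q_1<1/q_1$. Having produced pairwise distinct $q_1,\dots,q_k$, note that $\|q_ix\|>0$ for each $i$ since $x\notin\Q$; choose $Q_{k+1}>1$ large enough that $1/Q_{k+1}<\min_{1\le i\le k}\|q_ix\|$, and apply Theorem~\ref{Dirichlet them} with threshold $Q_{k+1}$ to get $q_{k+1}\in\N$ with $q_{k+1}<Q_{k+1}$ and $\|q_{k+1}x\|\le 1/Q_{k+1}<1/q_{k+1}$. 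The strict inequality $\|q_{k+1}x\|<\|q_ix\|$ for all $i\le k$ forces $q_{k+1}\notin\{q_1,\dots,q_k\}$, so $(q_k)_{k\ge 1}$ is an infinite sequence of distinct integers, each satisfying $\|q_kx\|<1/q_k$.

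The argument is short, and the one point requiring care is precisely the last one: a lone invocation of Dirichlet's theorem does not rule out that the same $q$ is returned for every $Q$, so one must either display an explicit infinite family (the rational case) or exploit the positivity of $\|qx\|$ for irrational $x$ in order to push the threshold past all previously found values. This positivity — the fact that $qx$ is never an integer when $x$ is irrational — is exactly where the rational/irrational dichotomy enters, and it is the crux of converting the "for each $Q$" statement of Theorem~\ref{Dirichlet them} into the "for infinitely many $q$" conclusion.
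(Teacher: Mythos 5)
Your proof is correct and follows exactly the route the paper intends (it gives no proof, simply presenting the corollary as an application of Theorem~\ref{Dirichlet them} and citing Legendre): a single use of Dirichlet's theorem gives $\|qx\|\le 1/Q<1/q$, and your rational/irrational dichotomy with thresholds $Q_{k+1}$ chosen below $\min_{i\le k}\|q_ix\|$ is the standard way to guarantee infinitely many distinct $q$. Nothing to add.
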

The above two results characterize a uniform and an asymptotic rate of rational approximation to all real numbers, respectively. In this context, the metric Diophantine approximation aims to improve the approximation rates in the two statements mentioned above on the one hand, and to quantify the size of the intrinsic sets in terms of measures and dimensions on the other hand.

If we denote the rotation mapping on the unit circle by $R_\alpha: x \mapsto x+\alpha$, Dirichlet's theorem and its corollary represent an investigation of the approximation properties of the orbit $\{R_\alpha^n0:n\ge1\}.$ This motivates us to study the asymptotic approximation problems and uniform approximation problems in general dynamical systems. Regarding the approximation rate, it is natural to extend our discussion to general approximating functions. More specifically, given a metric space $(X, d)$, a point $x_0 \in X$, a mapping $T: X \rightarrow X$ and a positive function $\psi$, we are concerned with the size of the set
\begin{equation*}
A\left(T, \psi, x_0\right)=\left\{x \in X: d\left(T^n x, x_0\right)<\psi(n) \text { for infinitely many } n \in \mathbb{N} \right\}
\end{equation*}
and the set
\begin{equation*}
U\left(T, \psi, x_0\right)=\left\{x \in X: \forall N \gg 1, \exists 1 \leq n \leq N \text { such that } d\left(T^n x, x_0\right)<\psi(N)\right\},
\end{equation*}
here and hereafter $N\gg1$ denotes that $N$ is a sufficiently large integer.

When $(X, T)$ is the irrational rotation $([0,1], R_\alpha)$, Fayad \cite{F06} and Kim \cite{K07} studied the Lebesgue measure of the set $A\left(R_\alpha, \psi, x_0\right)$. Bugeaud \cite{B03}, Troubetzkoy and Schmeling \cite{TS03} proved that for any $a > 1,$
\begin{equation*}
\hdim \left\{ x \in [0, 1] : \left\| R_\alpha^n x - x_0 \right\| < \frac{1}{n^a}  \text { for infinitely many } n \in \mathbb{N} \right\} = \frac{1}{a}
,\end{equation*}
where $\hdim$ represents the Hausdorff dimension. Kim and Liao \cite{KL19}, and Kim et al. \cite{KR18} further investigated the size of the set $U\left(R_\alpha, \psi, x_0\right).$

When $(X,T)$ is a system where $T$ is an expanding rational map of degree $\ge 2$ and $X$ the corresponding Julia set, Hill and Velani \cite{HV95} investigated the Hausdorff dimension of the set $A\left(T, \psi, x_0\right).$ When $(X,T)$ is an exponentially mixing system with respect to the probability measure $\mu,$ Kleinbock, Konstantoulas and Richter \cite{KKR23} gave sufficient conditions for $U\left(T, \psi, x_0\right)$ to be of zero or full measure. More results regarding uniform approximation and asymptotic approximation can be found in \cite{GP21,HV97,HV99,KK20,KR22,TZ23}.

In this paper, we study the beta dynamical system $\left([0,1], T_\beta\right)$. For a real number $\beta>1$, we define the $\beta$-transformation $T_\beta:[0,1] \rightarrow[0,1]$ by
\begin{equation}\label{defbt}
T_\beta x=\beta x-\lfloor\beta x\rfloor \quad \text { for } x \in[0,1],
\end{equation}
where $\lfloor\cdot\rfloor$ indicates the integral part of a real number.

In 1957, R\'enyi \cite{R1957} introduced the
map $T_{\bt}$ as a model for expanding real numbers in non-integer bases. Parry \cite{P60} showed that there exists an invariant and ergodic measure under $T_\beta$ that is equivalent to the Lebesgue measure $\mathcal{L}$ defined on $[0,1]$. Then Birkhoff's ergodic theorem yields that for a fixed $ x_0 \in [0, 1] ,$
\begin{equation}\label{eq0}
\liminf_{n \to \infty} \left| T_\beta^n x - x_0 \right| = 0
\end{equation}
for $\mathcal{L}$-almost every $ x \in [0, 1] .$

Taking into account the speed of convergence in \eqref{eq0}, Philipp \cite{P67} proved that for every fixed $x_0 \in [0,1]$,
\begin{equation*}
\mathcal{L}\left(A\left(T_\beta, \varphi, x_0\right)\right)= \begin{cases}0 & \text { if } \sum_{n=1}^{\infty} \varphi(n)<\infty; \\ 1 & \text { if } \sum_{n=1}^{\infty} \varphi(n)=\infty.\end{cases}
\end{equation*}
Shen and Wang \cite{SW13} obtained the Hausdorff dimension of $A\left(T_\beta, \varphi, x_0\right):$
\begin{equation*}
\hdim A\left(T_\beta, \varphi, x_0\right)=\frac{1}{1+\alpha}, \text { \quad where } \alpha=\liminf _{n \to \infty} \frac{-\log _\beta \varphi(n)}{n}.
\end{equation*}
Furthermore, for any $x \in[0,1]$ and any real number $\beta > 1$, Bugeaud and Liao \cite{BL16} introduced the asymptotic approximation exponent, defined as
\begin{equation*}
v_{\beta}(x)=\sup \left\{0 \leq v < \infty:
T_\beta^n x<\left(\beta^n\right)^{-v}
\text { for infinitely many } n \in \mathbb{N} \right\},
\end{equation*}
and the uniform approximation exponent, defined as
\begin{equation*}
\hat{v}_{\beta}(x)=\sup \left\{0 \leq \hat{v} < \infty:
\forall N \gg 1, \exists 1 \leq n \leq N \text { such that } T_\beta^n x<\left(\beta^N\right)^{-\hat{v}} \right\}.
\end{equation*}
They also showed that
\begin{equation*}
\hdim \left\{x \in [0,1]: \hat{v}_\beta(x) \geq \hat{v}\right\}=\hdim\left\{x \in [0,1]: \hat{v}_\beta(x)=\hat{v}\right\}=\left(\frac{1-\hat{v}}{1+\hat{v}}\right)^2
\end{equation*}
for all $\hat{v} \in[0,1]$.

With respect to the two-dimensional case of beta dynamical systems, Wu \cite{WYF23} considered the size of the set
\begin{equation*}W_{\beta_1,\beta_2}(v) =\left\{(x, y) \in[0,1]^2: \begin{array}{l}
T_{\beta_1}^n x<\beta_1^{-n v} \\
T_{\beta_2}^n y<\beta_2^{-n v}
\end{array} \text { for infinitely many } n \in \mathbb{N}\right\},\end{equation*}
where $\beta_2\geq \beta_1>1$ and $v>0.$ More specifically, Wu proved the following result.
\begin{thm}[\cite{WYF23}]\label{wyf}
Let $\beta_1$ and $\beta_2$ be real numbers with $\beta_2\geq \beta_1>1$ and let $v$ be a positive number. Then,
\begin{equation}\label{dim_max_j} \hdim W_{\beta_1,\beta_2}(v) =\left\{\begin{array}{ll}
\frac{2+v-v\log_{\beta_2}\beta_1}{1+v} &\text{if}~\beta_1^{1+v}<\beta_2;\\
\min\left\{\frac{1+\log_{\beta_2}\beta_1}{(1+v)\log_{\beta_2}\beta_1}, \frac{2+v-v\log_{\bt_2}\bt_1}{1+v}\right\} &\text{if}~\beta_1^{1+v}\geq\beta_2.
\end{array}
\right.\end{equation}
\end{thm}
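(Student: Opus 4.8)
The plan is to treat $W_{\beta_1,\beta_2}(v)$ as a $\limsup$ set and compute its dimension by the standard two-sided scheme: an efficient covering for the upper bound, and a Cantor-type subset carrying a measure with controlled local mass (the mass distribution principle, in the form given in Falconer's book) for the lower bound. Write $W_{\beta_1,\beta_2}(v)=\bigcap_{N\ge1}\bigcup_{n\ge N}E_n$, where $E_n=A_n^{(1)}\times A_n^{(2)}$ and $A_n^{(i)}=\{t\in[0,1]:T_{\beta_i}^n t<\beta_i^{-nv}\}$. The geometric input is that $A_n^{(i)}$ is contained in a union of at most $C\beta_i^{\,n}$ intervals of length $\beta_i^{-n(1+v)}$: on each cylinder $I_n$ of order $n$ the map $T_{\beta_i}^n$ is affine with slope $\beta_i^{\,n}$, so $\{t\in I_n:T_{\beta_i}^n t<\beta_i^{-nv}\}$ is an interval of length $\le\beta_i^{-n(1+v)}$, and R\'enyi's estimate \cite{R1957} bounds the number of order-$n$ cylinders by a constant times $\beta_i^{\,n}$. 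Hence $E_n$ is covered by $\asymp(\beta_1\beta_2)^{n}$ rectangles of size $\beta_1^{-n(1+v)}\times\beta_2^{-n(1+v)}$; since $\beta_1\le\beta_2$, the horizontal side is the longer one.

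For the upper bound I would cover $\bigcup_{n\ge N}E_n$ by cubes and optimise the cube side $\rho$ against the exponent $s$; two extreme choices suffice. Taking $\rho=\beta_2^{-n(1+v)}$ (the short side) needs $(\beta_2/\beta_1)^{n(1+v)}$ cubes per rectangle and gives $\hdim W_{\beta_1,\beta_2}(v)\le s_1:=\frac{2+v-v\log_{\beta_2}\beta_1}{1+v}$; taking $\rho=\beta_1^{-n(1+v)}$ (the long side) uses one cube per rectangle and gives $\hdim W_{\beta_1,\beta_2}(v)\le s_2:=\frac{1+\log_{\beta_1}\beta_2}{1+v}=\frac{1+\log_{\beta_2}\beta_1}{(1+v)\log_{\beta_2}\beta_1}$. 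A short computation shows $s_1\le s_2\iff v\le\log_{\beta_1}\beta_2$, and the hypothesis $\beta_1^{1+v}<\beta_2$ forces $v<\log_{\beta_1}\beta_2$; thus in the first regime $\min\{s_1,s_2\}=s_1$ and in the second regime $\min\{s_1,s_2\}$ is exactly the displayed value, so in both cases the covering bound matches the claimed formula. (Intermediate $\rho$ only helps along the critical line $s=1$ and yields nothing new.)

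For the lower bound I would build a homogeneous Cantor set inside $W_{\beta_1,\beta_2}(v)$. Fix $s<\min\{s_1,s_2\}$, choose a lacunary sequence $n_1<n_2<\cdots$ with $n_{k+1}/n_k\to\infty$ (so that the ``$\gg1$'' requirements and all sub-exponential corrections are negligible), and let the $k$-th generation consist of products $\widetilde I\times\widetilde J$, where $\widetilde I$ is the sub-interval of a \emph{full} order-$n_k$ $\beta_1$-cylinder on which $T_{\beta_1}^{n_k}(\cdot)<\beta_1^{-n_kv}$ (length $\beta_1^{-n_k(1+v)}$) and $\widetilde J$ is defined analogously for $\beta_2$ (length $\beta_2^{-n_k(1+v)}$). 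To pass from generation $k$ to $k+1$, inside $\widetilde I$ I select $\asymp\beta_1^{\,n_{k+1}-n_k(1+v)}$ full order-$n_{k+1}$ $\beta_1$-cylinders, roughly equispaced, using the quantitative full-cylinder results for $\beta$-expansions (Bugeaud--Wang, Fan--Wang), and likewise $\asymp\beta_2^{\,n_{k+1}-n_k(1+v)}$ such cylinders inside $\widetilde J$, forming the product grid; every point produced this way lies in $W_{\beta_1,\beta_2}(v)$ since the defining inequalities hold at every $n_k$. I then put on the Cantor set the probability measure $\mu$ that subdivides uniformly at each stage, so that a generation-$k$ rectangle carries mass $\asymp(\beta_1\beta_2)^{-n_k}$ to leading exponential order. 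It remains to verify $\mu(B(z,r))\lesssim r^{s}$ for all small $r$, by comparing $r$ with the four relevant scales $\beta_1^{-n_k(1+v)}$, $\beta_2^{-n_k(1+v)}$, $\beta_1^{-n_k}$, $\beta_2^{-n_k}$ and counting how many basic rectangles a ball of radius $r$ can meet; the mass distribution principle then yields $\hdim W_{\beta_1,\beta_2}(v)\ge s$, and letting $s\uparrow\min\{s_1,s_2\}$ finishes. (Alternatively one could feed a divergence-type full-measure statement, e.g.\ for $\{(x,y):T_{\beta_1}^n x<1/n,\ T_{\beta_2}^n y<1/n\ \text{i.o.}\}$, into a mass transference principle for rectangles, at the cost of importing that machinery.)

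The step I expect to be the real obstacle is the ball-measure estimate, together with the $\beta$-expansion bookkeeping underneath it. Unlike base-$b$ expansions, $\beta$-cylinders have unequal lengths and not all are full, so one must consistently work with full cylinders, control their number inside the constraint intervals $\widetilde I,\widetilde J$, and keep them almost equispaced; this is where the constants $m_0(\beta_i)$ and the lacunarity of $(n_k)$ are spent. More importantly, the anisotropy of the basic rectangles $\beta_1^{-n_k(1+v)}\times\beta_2^{-n_k(1+v)}$ is precisely what creates the dichotomy in the theorem: whether the long (horizontal) side $\beta_1^{-n_k(1+v)}$ of a generation-$k$ rectangle exceeds the vertical gap $\beta_2^{-n_k}$ between generation-$k$ rectangles is governed by the sign of $\beta_1^{1+v}-\beta_2$, and this decides whether a ball at that scale meets $O(1)$ basic rectangles or a number of them growing with $k$ --- hence which of $s_1,s_2$ is the binding exponent. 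Handling both coordinates simultaneously under the \emph{synchronised} index $n$, rather than as an honest product, is what makes the argument genuinely two-dimensional and is the part requiring the most care.
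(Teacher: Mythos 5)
First, a point of orientation: the paper does not prove this statement at all --- Theorem \ref{wyf} is quoted from \cite{WYF23} and used as a black box (to bound $\hdim E_{\beta_1,\beta_2}(0,v)$ and the case $v=\infty$), so there is no internal proof to compare yours against; I can only assess your sketch on its own merits and against the closely related construction the paper does carry out for Theorem \ref{thm3}. On its own terms your outline is the standard route and the exponent bookkeeping is right. For the upper bound: $\asymp(\beta_1\beta_2)^n$ rectangles of size $\beta_1^{-n(1+v)}\times\beta_2^{-n(1+v)}$, squares of the short side give $s_1=\frac{2+v-v\log_{\beta_2}\beta_1}{1+v}$, one square of the long side per rectangle gives $s_2=\frac{1+\log_{\beta_2}\beta_1}{(1+v)\log_{\beta_2}\beta_1}$, and your equivalence $s_1\le s_2\iff v\log_{\beta_2}\beta_1\le1$ shows the displayed formula is exactly $\min\{s_1,s_2\}$ in both regimes, so the covering bound matches. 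For the lower bound, the step you defer (the ball estimate) does close for the uniformly subdividing measure: for $r$ between the child's short side $\beta_2^{-n(1+v)}$ and long side $\beta_1^{-n(1+v)}$ one gets $\mu(B(\mathbf{z},r))\asymp(\beta_1\beta_2)^{-n}\,r\,\beta_1^{n(1+v)}$, and the two endpoints of this range produce precisely the constraints $s\le s_1$ and $s\le s_2$; the remaining ranges up to the spacings $\beta_1^{-n},\beta_2^{-n}$ give only the weaker constraints $s\le 2-v\log_{\beta_2}\beta_1$, $s\le\frac{2+v}{1+v}$ and, when $\beta_1^{1+v}\ge\beta_2$, $s\le 1+\log_{\beta_2}\beta_1$ (which is indeed $\ge s_1$ in that regime, since $v\log_{\beta_2}\beta_1\ge 1-\log_{\beta_2}\beta_1$ there). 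Two bookkeeping points should be made explicit: the full-cylinder selection loses polynomial factors (only about one in $n$ consecutive order-$n$ cylinders is guaranteed full), and a generation-$k$ rectangle carries mass $(\beta_1\beta_2)^{-n_k(1+o(1))}$ rather than $(\beta_1\beta_2)^{-n_k}$ because of the contribution of earlier generations; both losses are subexponential and are absorbed by your lacunarity $n_{k+1}/n_k\to\infty$ together with fixing $s$ strictly below $\min\{s_1,s_2\}$.

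Compared with the machinery the present paper uses for its own lower bound (Theorem \ref{thm3}), your route differs in two respects: you invoke the quantitative density of full cylinders (Bugeaud--Wang, Fan--Wang), whereas the paper manufactures full words via the approximations $\beta_{i,N}\uparrow\beta_i$ and Lemma \ref{lem1} (words of the form $\omega 0^N$); and you keep genuinely anisotropic basic rectangles and absorb the anisotropy into the ball count, whereas the paper adds a ``dividing'' step that refines the first coordinate to order $\tilde{h}_k\approx h_k\log_{\beta_1}\beta_2$ so that its basic pieces are near-squares, which simplifies the ball estimate at the cost of a heavier construction. The dichotomy you identify ($\beta_1^{1+v}$ versus $\beta_2$ deciding whether a ball at the critical scale meets $O(1)$ or growing numbers of basic rectangles) is exactly the case split the paper performs in its Cases 1 and 2, so your sketch is consistent with, though independent of, the techniques in this paper.
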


Considering the results discussed above, it is reasonable to further study the intersection of asymptotic approximation problems and uniform approximation problems in beta dynamical systems  in two dimensions. For clarity, we introduce two approximation exponents associated with asymptotic/uniform Diophantine approximation.
\begin{dfn}\label{maxexpnt}
Let $\beta_1$ and $\beta_2$ be real numbers greater than 1. For any $(x,y)\in[0,1]^2,$ define
\begin{equation*}
    v_{\beta_1,\beta_2}(x,y)=\sup \left\{0\leq v<\infty:\begin{array}{l}
T_{\beta_1}^n x<\beta_1^{-n v} \\
T_{\beta_2}^n y<\beta_2^{-n v}
\end{array} \text { for infinitely many } n\in\N \right\},
\end{equation*}
and
\begin{equation*}
    \hat{v}_{\beta_1,\beta_2}(x,y)=\sup \left\{0\leq\hat{v}<\infty: \forall N\gg1,~ \exists ~1\leq n \leq N ~\text{such that  }  \begin{array}{l}
T_{\beta_1}^nx < \beta_1^{-N\hat{v}} \\
T_{\beta_2}^ny < \beta_2^{-N\hat{v}}
\end{array}     \right\}.
\end{equation*}
\end{dfn}
The exponents $v_{\beta_1,\beta_2}(x,y)$ and $\hat{v}_{\beta_1,\beta_2}(x,y)$ are similar to those defined in \cite{BL16}; see also \cite{BA10,BL05} for more information. With this notation, $W_{\beta_1,\beta_2}(v)$ can be represented as $\{(x,y)\in [0,1]^2: v_{\beta_1,\beta_2}(x,y)\geq v \}.$

We will denote by $A_{\beta_1, \beta_2}(v)$ the level set of the asymptotic approximation exponent
\begin{equation*}
A_{\beta_1, \beta_2}(v)=\left\{(x, y) \in[0,1]^2: v_{\beta_1, \beta_2}(x, y)=v\right\},
\end{equation*}
and by $U_{\beta_1, \beta_2}(\hat{v})$ the level set of the uniform approximation exponent
\begin{equation*}
U_{\beta_1, \beta_2}(\hat{v})=\left\{(x, y) \in[0,1]^2: \hat{v}_{\beta_1, \beta_2}(x, y)=\hat{v} \right\}.
\end{equation*}
%Let $E_{\beta_1, \beta_2}(\hat{v},v)$ denote the intersection of $A_{\beta_1, \beta_2}(v)$ and $U_{\beta_1, \beta_2}(\hat{v})$, that is,
%$$
%E_{\beta_1, \beta_2}(\hat{v},v)=\left\{(x, y) \in[0,1]^2: \hat{v}_{\beta_1, \beta_2}(x, y)=\hat{v} \text { and } v_{\beta_1, \beta_2}(x, y)=v\right\}.
%$$
For general $\beta_1$ and $\beta_2,$ we study the Hausdorff dimension of the intersection  \begin{equation*}E_{\beta_1, \beta_2}(\hat{v},v)=A_{\beta_1, \beta_2}(v)\cap U_{\beta_1, \beta_2}(\hat{v})\end{equation*} in this paper.

For brevity, let
\begin{equation*}A(\hat{v},v)=\frac{\left(1+\log _{\beta_2} \beta_1\right)(v-\hat{v}-\hat{v} v)}{(1+v)(v-\hat{v})}+1-\log _{\beta_2} \beta_1,\quad B(\hat{v},v)=\frac{1+\log _{\beta_2} \beta_1}{\log _{\beta_2} \beta_1} \frac{v-\hat{v}-\hat{v} v}{(1+v)(v-\hat{v})}\end{equation*}
and
\begin{equation*}C(\hat{v},v)=\frac{v-\hat{v}-v \hat{v}\left(1+\left(\log _{\beta_2} \beta_1\right)^{-1}\right)}{(1+v)(v-\hat{v})}+1.
\end{equation*}

\begin{thm}\label{thm3}
Let $\beta_2\ge\beta_1>1$ and $\log _{\beta_2}{\beta_1}>\frac{\hat{v}}{v}(1+v)$. Then the following statements hold:
\begin{itemize}
  \item when $\hat{v}=v=0$, the set $E_{\beta_1, \beta_2}(\hat{v},v)$ has full Lebesgue measure;
   \item when $0<\frac{v}{1+v}<\hat{v}\leq \infty$,
$$\hdim E_{\beta_1, \beta_2}(\hat{v}, v)=0;$$
  \item when $0\leq \hat{v} \leq \frac{v}{1+v}<\infty,$ the Hausdorff dimension of $E_{\beta_1, \beta_2}(\hat{v}, v)$ is given by
\begin{align*} \left\{\begin{array}{ll}
   \min\left\{  A(\hat{v},v) ,~B(\hat{v},v)\right\} &\text{if}~\beta_1^{1+v}> \beta_2; \\
   \min\left\{  A(\hat{v},v) ,~C(\hat{v},v)\right\} &\text{if}~\beta_1^{1+v}\le \beta_2.
   \end{array}
  \right.\end{align*}

\end{itemize}\end{thm}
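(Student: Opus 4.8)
The plan is to transfer the problem to the symbolic dynamics of the two $\beta$-expansions, reduce the joint approximation condition to a combinatorial statement about synchronised blocks of zero digits, and then prove the dimension formula by a covering argument (upper bound) together with a Cantor-set construction and the mass distribution principle (lower bound). Concretely, I would first recall the standard dictionary for $\beta$-expansions: writing $\varepsilon^{\beta}_i(z)$ for the digits and $I^{\beta}_n(z)$ for the order-$n$ cylinder, one has $|I^{\beta}_n(z)|\le\beta^{-n}$ with equality exactly on \emph{full} cylinders, while, by the Bugeaud--Wang counting estimates, the number of admissible words of length $n$ is $\asymp\beta^{n}$ and every interval of length $\beta^{-n}$ contains a full cylinder of order $n+o(n)$. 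Since a real number lies in $[0,\beta^{-\ell})$ precisely when its first $\ell$ $\beta$-digits vanish, the inequality $T_{\beta}^{n}z<\beta^{-nt}$ is, up to an $o(n)$ shift, equivalent to prescribing a run of $\lceil nt\rceil$ zeros starting at position $n+1$. Hence to each $(x,y)\in[0,1]^2$ I would attach a common increasing sequence of block positions $m_1<m_2<\cdots$ and lengths $\ell_1,\ell_2,\dots$ (the maximal blocks of simultaneous zero digits, so $m_{k+1}\ge m_k+\ell_k$), and show that $v_{\beta_1,\beta_2}(x,y)=\limsup_k \ell_k/m_k$ and $\hat v_{\beta_1,\beta_2}(x,y)=\liminf_k \ell_k/m_{k+1}$ (the two bases do not enter here because the exponent in coordinate $i$ is taken with respect to $\beta_i$). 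Since $t\mapsto t/(1+t)$ is increasing and $\ell_k/m_{k+1}\le(\ell_k/m_k)/(1+\ell_k/m_k)$, this forces $\hat v\le v/(1+v)$.

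The two extreme cases are then quick. For $0<v/(1+v)<\hat v\le\infty$ the inequality just noted shows $E_{\beta_1,\beta_2}(\hat v,v)$ is (essentially) empty, and in any case, accounting for the $o(n)$ corrections, for every $\epsilon>0$ it is covered by cylinder families forcing a relative zero-run $>v$ infinitely often, hence of Hausdorff dimension $<\epsilon$; letting $\epsilon\to0$ gives dimension $0$ (for $\hat v=\infty$ the set is empty). For $\hat v=v=0$ I would use that $\hat v_{\beta_1,\beta_2}\le v_{\beta_1,\beta_2}$ on a set of full measure, together with $\hdim W_{\beta_1,\beta_2}(1/j)<2$ from Theorem~\ref{wyf}, so that $\{v_{\beta_1,\beta_2}>0\}=\bigcup_j W_{\beta_1,\beta_2}(1/j)$ is $\mathcal L^2$-null and $E_{\beta_1,\beta_2}(0,0)$ has full Lebesgue measure.

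For the main range $0\le\hat v\le v/(1+v)$ I would treat the upper bound by splitting $E_{\beta_1,\beta_2}(\hat v,v)$ into a countable union, over rational windows for $\limsup_k\ell_k/m_k$, $\liminf_k\ell_k/m_k$ and $\limsup_k m_{k+1}/m_k$, of sets on which the block data $(m_k,\ell_k)$ are asymptotically prescribed; since Hausdorff dimension ignores countable unions it suffices to cover each such set efficiently. On such a set a point lies, at generation $k$, in a product cylinder $I^{\beta_1}_{m_{k+1}}(x)\times I^{\beta_2}_{m_{k+1}}(y)$ whose free digits (those outside the zero-blocks) range over admissible words, giving $\asymp\beta_1^{\,m_{k+1}-m_k-\ell_k}$ choices in the first coordinate and $\asymp\beta_2^{\,m_{k+1}-m_k-\ell_k}$ in the second, while the rectangle has sides $\asymp\beta_1^{-m_{k+1}}\times\beta_2^{-m_{k+1}}$. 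As $\beta_1\le\beta_2$ the $x$-side is the longer one, so building an $s$-cover requires refining the $x$-coordinate at an intermediate scale (specifying roughly $m_{k+1}/\log_{\beta_2}\beta_1$ digits of $x$ against $m_{k+1}$ digits of $y$); counting and optimising the resulting $s$-sum over all admissible block sequences would then yield $\min\{A(\hat v,v),B(\hat v,v)\}$ when $\beta_1^{1+v}>\beta_2$ and $\min\{A(\hat v,v),C(\hat v,v)\}$ when $\beta_1^{1+v}\le\beta_2$. Here $A(\hat v,v)$ is the ``asymptotic'' bound coming from the deepest zero-blocks (the analogue of Shen--Wang's $\tfrac{1}{1+\alpha}$, specialising at $\hat v=0$ to Wu's value in Theorem~\ref{wyf}), the $B/C$ dichotomy is the same two-base phenomenon as in Theorem~\ref{wyf} (reflecting how the scales $\beta_1^{-n}$ and $\beta_2^{-n}$ compare across a zero-block), and the factors $\log_{\beta_2}\beta_1$ quantify the cost of matching these unequal scales; the hypothesis $\log_{\beta_2}\beta_1>\tfrac{\hat v}{v}(1+v)$ is exactly what keeps the optimising block sequence inside the single regime in which this cover is sharp.

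For the lower bound I would construct a Moran subset $F\subseteq E_{\beta_1,\beta_2}(\hat v,v)$ realising the extremal configuration of the previous step: fix integer sequences $m_k\uparrow\infty$ and $\ell_k$ with $\ell_k/m_k\to v$ along a subsequence and $m_{k+1}=\lceil\hat v^{-1}\ell_k\rceil$, so that $v_{\beta_1,\beta_2}\equiv v$ and $\hat v_{\beta_1,\beta_2}\equiv\hat v$ hold tightly on $F$; in each coordinate $i$ prescribe a run of $\ell_k$ zero digits starting at position $m_k+1$ inside a full cylinder of order $m_k$ (available by Bugeaud--Wang, so the continuation is unconstrained), and between consecutive blocks let the digits range over all admissible words ($\asymp\beta_i^{\,m_{k+1}-m_k-\ell_k}$ full cylinders). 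Put on $F$ the natural Borel probability measure $\mu$ distributing mass uniformly among the surviving generation-$k$ product cylinders, and estimate $\mu(B(z,r))$. This last estimate is the genuine obstacle: a ball of radius $r$ does \emph{not} sit inside a single product of cylinders of equal symbolic depth, since the basic rectangles have unequal sides $\asymp\beta_1^{-n}\times\beta_2^{-n}$, so the local dimension of $\mu$ at $z$ is itself the value of a one-parameter optimisation (how to split the scale $r$ between the two coordinates), and one must verify that this optimum never exceeds $\min\{A,B\}$ (resp.\ $\min\{A,C\}$) whatever the position of $r$ relative to the synchronised zero-blocks — and it is here that the hypothesis $\log_{\beta_2}\beta_1>\tfrac{\hat v}{v}(1+v)$ is invoked again, to rule out the competing splitting. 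Once the ball estimate is in hand the mass distribution principle gives $\hdim F\ge\min\{A,B\}$ (resp.\ $\min\{A,C\}$), completing the proof; the remaining work is the bookkeeping that keeps the $\limsup$ governing $v$ compatible with the ``for all large $N$'' governing $\hat v$ throughout the construction, and that absorbs the $o(n)$ full-cylinder corrections without moving the exponents.
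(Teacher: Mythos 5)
Your overall architecture (reduction to maximal simultaneous zero blocks with $v=\limsup_k \ell_k/m_k$ and $\hat v=\liminf_k \ell_k/m_{k+1}$, the constraint $\hat v\le \frac{v}{1+v}$, the two degenerate cases, a covering argument for the upper bound and a Moran-type subset plus the mass distribution principle for the lower bound) is the same as the paper's, and your treatment of the cases $\hat v=v=0$ and $\hat v>\frac{v}{1+v}$ is fine. However, the covering you actually describe for the main range is taken at the wrong scale and would not yield the stated formulas. At generation $k$ you cover by the product cylinders $I^{\beta_1}_{m_{k+1}}\times I^{\beta_2}_{m_{k+1}}$, of sides $\asymp\beta_1^{-m_{k+1}}\times\beta_2^{-m_{k+1}}$: the zero blocks are used only to reduce the \emph{number} of cylinders, never to shrink the covering sets themselves. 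With the cumulative bound $\sum_{i\le k}\ell_i\ge\bigl(\tfrac{v\hat v}{v-\hat v}-\varepsilon\bigr)m_{k+1}$ (the analogue of \eqref{keyeq1}), squaring off the longer $x$-side then gives at best
\begin{equation*}
\hdim \widetilde E_{\beta_1,\beta_2}(\hat v,v)\le \bigl(1+\log_{\beta_2}\beta_1\bigr)\frac{v-\hat v-v\hat v}{v-\hat v}+1-\log_{\beta_2}\beta_1,
\end{equation*}
which is $A(\hat v,v)$ \emph{without} the factor $\frac{1}{1+v}$ on the first term; it is strictly larger than $A(\hat v,v)$ for every $\hat v<\frac{v}{1+v}$ and degenerates to the trivial bound $2$ at $\hat v=0$. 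The sharp covering must exploit the zero block of the current generation in the \emph{diameter}: one covers by the sets $J^{\varepsilon}_{n_k,\beta_i}(\cdot)=\{z\in I_{n_k,\beta_i}: T^{n_k}_{\beta_i}z<\beta_i^{-n_k(v-\varepsilon)}\}$, i.e.\ at depth $\approx(1+v)n_k$ rather than $n_{k+1}$, and only then do the three ways of squaring (squares of side $\beta_2^{-n_k(1+v)}$; one square of side $\beta_1^{-n_k(1+v)}$ per rectangle; and, when $\beta_1^{1+v}\le\beta_2$, one such square absorbing $\approx\beta_2^{n_k}\beta_1^{-n_k(1+v)}$ rectangles sharing the same $x$-part) produce $A$, $B$ and $C$. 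Your sketch never specifies these alternatives, so the assertion that "counting and optimising would yield" $\min\{A,B\}$ resp.\ $\min\{A,C\}$ is unsupported as it stands.

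On the lower bound you correctly identify the ball estimate as the crux, but you leave it as an acknowledged obstacle rather than prove it, and the role of the hypothesis $\log_{\beta_2}\beta_1>\frac{\hat v}{v}(1+v)$ is not where you place it. In the paper the hypothesis is needed already to make the Cantor construction well defined: each rectangle $I_{h_k,\beta_1}\times I_{h_k,\beta_2}$ is subdivided in the $x$-direction into approximate squares by prescribing $x$-digits up to depth $\tilde h_k\approx h_k\log_{\beta_1}\beta_2$, and one must have $\tilde h_k\le l_{k+1}$, i.e.\ asymptotically $(1+v)\log_{\beta_1}\beta_2\le v/\hat v$, which is exactly the stated condition; this squaring-off yields the separation properties (Lemma \ref{lengap}) that reduce the ball estimate to an explicit eight-case counting. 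Your construction omits this dividing step, so the "one-parameter optimisation" of $\mu(B(z,r))$ over all positions of $r$ relative to the scales $\beta_1^{-l_k},\beta_1^{-h_k},\beta_2^{-l_k},\beta_2^{-h_k}$ — precisely the part of the argument where the candidate exponents $A$, $B$, $C$ emerge and where the hypothesis must be tracked — remains a claim, not a proof.
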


\begin{rem}
Notably, the condition $\log_{\beta_2}{\beta_1} > \frac{\hat{v}}{v}(1 + v)$ is only invoked for estimating the lower bound of $\hdim E_{\beta_1, \beta_2}(\hat{v}, v).$
\end{rem}

As corollaries of Theorem \ref{thm3}, for general $\beta_1$ and $\beta_2$, we can obtain the Hausdorff dimensions of sets $A_{\beta_1, \beta_2}(v)$ and $U_{\beta_1, \beta_2}(\hat{v})$. In particular, when $\bt_1=\bt_2$, the Hausdorff dimensions of these two sets were established by Tian and Peng \cite{TP04}.
\begin{thm}\label{thm1}
Let $\beta_1$ and $\beta_2$ be real numbers greater than 1. For any $0< v < \infty,$ we have
\begin{equation*}
\hdim A_{\beta_1, \beta_2}(v)
=\hdim W_{\beta_1, \beta_2}(v)= \begin{cases}\frac{2+v-v \log _{\beta_2} \beta_1}{1+v} & \text { if } \beta_1^{1+v}<\beta_2; \\ \min \left\{\frac{1+\log _{\beta_2} \beta_1}{(1+v) \log _{\beta_2} \beta_1}, \frac{2+v-v \log _{\beta_2} \beta_1}{1+v}\right\} & \text { if } \beta_1^{1+v} \geq \beta_2.\end{cases}
\end{equation*}
When $v=0,$ $\hdim A_{\beta_1, \beta_2}(v)
=2;$ when $v=\infty,$  $\hdim A_{\beta_1, \beta_2}(v)
=0.$
\end{thm}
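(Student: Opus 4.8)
The plan is to sandwich $\hdim A_{\beta_1,\beta_2}(v)$ between $\hdim W_{\beta_1,\beta_2}(v)$ from above and the Hausdorff dimension of a well-chosen subset from below, and to recognise that both coincide with the value in Theorem \ref{wyf}. We may assume $\beta_2\ge\beta_1$: the map $(x,y)\mapsto(y,x)$ is bi-Lipschitz and interchanges $A_{\beta_1,\beta_2}(v)$ with $A_{\beta_2,\beta_1}(v)$ (and $W_{\beta_1,\beta_2}(v)$ with $W_{\beta_2,\beta_1}(v)$), so the ordered case covers the general one after relabelling. The upper bound is then immediate: by the identity $W_{\beta_1,\beta_2}(v)=\{(x,y):v_{\beta_1,\beta_2}(x,y)\ge v\}$ recorded after Definition \ref{maxexpnt}, we have $A_{\beta_1,\beta_2}(v)\subseteq W_{\beta_1,\beta_2}(v)$, hence $\hdim A_{\beta_1,\beta_2}(v)\le\hdim W_{\beta_1,\beta_2}(v)$, whose value is supplied by Theorem \ref{wyf}.

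For the matching lower bound with $0<v<\infty$, the key observation is that $E_{\beta_1,\beta_2}(0,v)\subseteq A_{\beta_1,\beta_2}(v)$, so it suffices to show $\hdim E_{\beta_1,\beta_2}(0,v)=\hdim W_{\beta_1,\beta_2}(v)$. Since $\log_{\beta_2}\beta_1>0=\tfrac{\hat v}{v}(1+v)$ at $\hat v=0$ and $0\le 0\le\tfrac{v}{1+v}<\infty$, the third bullet of Theorem \ref{thm3} applies. Setting $\hat v=0$ and simplifying gives
\begin{equation*}
A(0,v)=\frac{2+v-v\log_{\beta_2}\beta_1}{1+v},\qquad
B(0,v)=\frac{1+\log_{\beta_2}\beta_1}{(1+v)\log_{\beta_2}\beta_1},\qquad
C(0,v)=\frac{2+v}{1+v}.
\end{equation*}
When $\beta_1^{1+v}>\beta_2$, Theorem \ref{thm3} yields $\hdim E_{\beta_1,\beta_2}(0,v)=\min\{A(0,v),B(0,v)\}$, which is precisely the value of $\hdim W_{\beta_1,\beta_2}(v)$ given by Theorem \ref{wyf} in the range $\beta_1^{1+v}\ge\beta_2$. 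When $\beta_1^{1+v}\le\beta_2$ it yields $\min\{A(0,v),C(0,v)\}$; since $C(0,v)-A(0,v)=\tfrac{v\log_{\beta_2}\beta_1}{1+v}\ge 0$ this minimum equals $A(0,v)$, which matches Theorem \ref{wyf} on the open sub-range $\beta_1^{1+v}<\beta_2$, while on the threshold $\beta_1^{1+v}=\beta_2$ a short computation gives $A(0,v)=1+(\log_{\beta_2}\beta_1)^2\le 1+\log_{\beta_2}\beta_1=B(0,v)$, so both branches of the Theorem \ref{wyf} formula agree there and equal $A(0,v)$. Hence $\hdim E_{\beta_1,\beta_2}(0,v)=\hdim W_{\beta_1,\beta_2}(v)$ in every case, and together with the upper bound this settles $0<v<\infty$.

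It remains to treat the endpoints. For $v=\infty$ we have $A_{\beta_1,\beta_2}(\infty)=\bigcap_{m\ge 1}W_{\beta_1,\beta_2}(m)$, so by monotonicity $\hdim A_{\beta_1,\beta_2}(\infty)\le\inf_m\hdim W_{\beta_1,\beta_2}(m)$, and Theorem \ref{wyf} gives $\hdim W_{\beta_1,\beta_2}(m)\le\tfrac{1+\log_{\beta_2}\beta_1}{(1+m)\log_{\beta_2}\beta_1}\to 0$, whence the dimension is $0$. For $v=0$ one has $\{v_{\beta_1,\beta_2}>0\}=\bigcup_{m\ge 1}W_{\beta_1,\beta_2}(1/m)$, a countable union of sets of Hausdorff dimension strictly below $2$ (as read off from Theorem \ref{wyf}), hence a Lebesgue-null set; therefore $A_{\beta_1,\beta_2}(0)=[0,1]^2\setminus\{v_{\beta_1,\beta_2}>0\}$ has full Lebesgue measure and dimension $2$. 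The only step that needs genuine care is the reconciliation of the $\min$ appearing in Theorem \ref{thm3} (at $\hat v=0$) with the piecewise formula of Theorem \ref{wyf}, including the continuity of the dimension across $\beta_1^{1+v}=\beta_2$; everything else is formal, and no new dimension computation is required, so Theorem \ref{thm1} is a bona fide corollary of Theorems \ref{wyf} and \ref{thm3}.
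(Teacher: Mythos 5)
Your proposal is correct and follows the route the paper intends: Theorem \ref{thm1} is presented there as a corollary of Theorem \ref{thm3} (applied at $\hat v=0$, where the hypothesis $\log_{\beta_2}\beta_1>\frac{\hat v}{v}(1+v)$ is automatic) together with Theorem \ref{wyf}, with no separate proof written out, and your verification that $A(0,v)$, $B(0,v)$, $C(0,v)$ reproduce the formula of Theorem \ref{wyf} (including the threshold case $\beta_1^{1+v}=\beta_2$), plus the inclusion arguments for the upper bound and the endpoints $v=0,\infty$, supply exactly the intended details. Your explicit reduction to $\beta_2\ge\beta_1$ via the coordinate swap is in fact slightly more careful than the paper, whose stated formula with $\log_{\beta_2}\beta_1$ implicitly presumes that ordering (as do Theorems \ref{wyf} and \ref{thm3}).
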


\begin{thm}\label{thm2}
Let $\beta_1$ and $\beta_2$ be real numbers greater than 1.  For any $\hat{v}\in[0,1]$, we have
\begin{align*}
&\hdim \{(x,y)\in [0,1]^2: \hat{v}_{\beta_1,\beta_2}(x,y)=\hat{v} \}\\
=&\hdim \{(x,y)\in [0,1]^2: \hat{v}_{\beta_1,\beta_2}(x,y)\geq \hat{v} \}\\
=&\sup _{0\leq v \leq \infty }\hdim E_{\beta_1, \beta_2}(\hat{v}, v).
\end{align*}
Otherwise, for any $\hat{v}>1$,  both set
$ \{(x,y)\in [0,1]^2:  \hat{v}_{\beta_1,\beta_2}(x,y)=\hat{v} \}$ and set $\{(x,y)\in [0,1]^2: \hat{v}_{\beta_1,\beta_2}(x,y)\geq \hat{v} \} $ are countable.
\end{thm}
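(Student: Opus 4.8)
The plan is to derive Theorem~\ref{thm2} from Theorem~\ref{thm3} (together with Theorem~\ref{thm1}) by a sandwiching argument for the lower bound, a countable decomposition of the upper level set for the upper bound, and a separate elementary argument in the range $\hat v>1$.

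I start with $\hat v>1$. Directly from Definition~\ref{maxexpnt} one has the pointwise inequality $\hat v_{\beta_1,\beta_2}(x,y)\le\min\{\hat v_{\beta_1}(x),\hat v_{\beta_2}(y)\}$ (drop one of the two coordinate inequalities and keep the same witnesses). Hence it suffices to show, for a single $\beta>1$, that $\{x\in[0,1]:\hat v_\beta(x)>1\}$ is countable; I would in fact identify it with the countable set of $x$ having a finite $\beta$-expansion. Assume $T_\beta^n x>0$ for all $n$ and set $\ell(n)=-\log_\beta T_\beta^n x\in(0,\infty)$. Since $T_\beta^{n+j}x=\beta^j T_\beta^n x$ for $0\le j<\ell(n)$, after each small value the orbit runs through a ``block of zeros'' of length about $\ell(n)$, and at the start $n$ of a maximal such block one has $\ell(n-1)\le 1$. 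Fix $\hat v'\in(1,\hat v_\beta(x))$; the uniform condition forces $[N_0,\infty)\subseteq\bigcup_n[n,\ell(n)/\hat v')$ for some $N_0$. Suppose this union is unbounded: for a large level $M$ pick a hit $n\le M$ with $\ell(n)>M\hat v'$ and replace $n$ by the start of its block (which only increases $\ell$); then $\ell(n)>M\hat v'>M\ge n$, so the block occupies positions up to $n+\lceil\ell(n)\rceil-1$ and covers exactly the levels in $[n,\ell(n)/\hat v')$. To cover some level past $\ell(n)/\hat v'$ one needs a hit $n'$ with $n'\le\ell(n)/\hat v'$ (note $\ell(n)/\hat v'<\ell(n)$ as $\hat v'>1$) and $\ell(n')>\ell(n)$; such $n'$ cannot precede $n$ (else $n$ would lie inside $n'$'s block, contradicting that $n$ starts its block) and cannot equal $n$, so $n'$ lies strictly inside the block of $n$, forcing $\ell(n')=\ell(n)-(n'-n)<\ell(n)$ --- a contradiction. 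Thus the union is bounded, contradicting the uniform condition. Consequently $\{(x,y):\hat v_{\beta_1,\beta_2}(x,y)>1\}$ lies in a product of two countable sets, and since for $\hat v>1$ both $\{\hat v_{\beta_1,\beta_2}=\hat v\}$ and $\{\hat v_{\beta_1,\beta_2}\ge\hat v\}$ are contained in it, the last assertion of the theorem follows.

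Now let $\hat v\in[0,1]$. From $E_{\beta_1,\beta_2}(\hat v,v)=A_{\beta_1,\beta_2}(v)\cap U_{\beta_1,\beta_2}(\hat v)\subseteq U_{\beta_1,\beta_2}(\hat v)\subseteq\{(x,y):\hat v_{\beta_1,\beta_2}(x,y)\ge\hat v\}$ and monotonicity of Hausdorff dimension,
$$\sup_{0\le v\le\infty}\hdim E_{\beta_1,\beta_2}(\hat v,v)\ \le\ \hdim\{\hat v_{\beta_1,\beta_2}=\hat v\}\ \le\ \hdim\{\hat v_{\beta_1,\beta_2}\ge\hat v\},$$
so everything reduces to the reverse inequality $\hdim\{\hat v_{\beta_1,\beta_2}\ge\hat v\}\le\sup_v\hdim E_{\beta_1,\beta_2}(\hat v,v)$; no extra hypothesis is needed here because only inclusions were used. (For $\hat v=0$ the set is $[0,1]^2$, and the first bullet of Theorem~\ref{thm3} gives $\hdim E_{\beta_1,\beta_2}(0,0)=2$.) For $\hat v\in(0,1]$ I use $\hat v_{\beta_1,\beta_2}\le v_{\beta_1,\beta_2}$ pointwise to split
$$\{\hat v_{\beta_1,\beta_2}\ge\hat v\}=\big(\{\hat v_{\beta_1,\beta_2}\ge\hat v\}\cap\{v_{\beta_1,\beta_2}=\infty\}\big)\cup\bigcup_{k\in\N}\big(\{\hat v_{\beta_1,\beta_2}\ge\hat v\}\cap\{v_{\beta_1,\beta_2}\le k\}\big).$$
The first set lies in $A_{\beta_1,\beta_2}(\infty)$, of dimension $0$ by Theorem~\ref{thm1}, so by countable stability of Hausdorff dimension it suffices to prove, for each $k$, that $\hdim\big(\{\hat v_{\beta_1,\beta_2}\ge\hat v\}\cap\{v_{\beta_1,\beta_2}\le k\}\big)\le\sup_{0\le v\le\infty}\hdim E_{\beta_1,\beta_2}(\hat v,v)$.

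This last bound is the step I expect to be the main obstacle, and it rests on the covering construction behind the upper bound in Theorem~\ref{thm3} rather than on its statement alone. That cover is assembled from cylinders recording the locations and lengths of the long blocks of zeros imposed by the two approximation conditions, and its efficiency is governed only by the \emph{inequalities} $\hat v_{\beta_1,\beta_2}(x,y)\ge\hat v$ (which bounds the gaps between consecutive blocks) and $v_{\beta_1,\beta_2}(x,y)\le k$ (which bounds the lengths of the blocks relative to their positions); hence the very same cylinders already cover $\{\hat v_{\beta_1,\beta_2}\ge\hat v\}\cap\{v_{\beta_1,\beta_2}\le k\}$, not merely $E_{\beta_1,\beta_2}(\hat v,v)$. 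Since $v\mapsto\hdim E_{\beta_1,\beta_2}(\hat v,v)$ varies continuously and attains its supremum over $\{0\le v\le k\}$, one checks that the associated $s$-dimensional Hausdorff sums tend to $0$ for every $s>\sup_{0\le v\le k}\hdim E_{\beta_1,\beta_2}(\hat v,v)\le\sup_v\hdim E_{\beta_1,\beta_2}(\hat v,v)$, which gives the desired inequality and completes the proof. The genuinely delicate points are making this cover uniform in $v$ (so that enlarging the range from one value of $v$ to all of $\{v\le k\}$ does not inflate the dimension estimate) and carrying through the bookkeeping of the two competing scales $\beta_1^{-(1+v)}$ and $\beta_2^{-1}$, whose comparison is precisely what produces the case split between $\beta_1^{1+v}>\beta_2$ and $\beta_1^{1+v}\le\beta_2$ in Theorem~\ref{thm3}.
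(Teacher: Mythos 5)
Your handling of the range $\hat v>1$ is correct and is in fact a different (and more self-contained) route than the paper's: you reduce to the one-dimensional exponents via $\hat v_{\beta_1,\beta_2}(x,y)\le\min\{\hat v_{\beta_1}(x),\hat v_{\beta_2}(y)\}$ and prove directly, by the block-start argument, that $\hat v_{\beta}(x)\le 1$ whenever the $\beta$-expansion of $x$ is infinite, whereas the paper deduces countability from the inequality $\hat v\le \frac{v}{1+v}$ recorded in \eqref{tm1.6} for points whose pair-orbit never hits $(0,0)$. The chain $\sup_{v}\hdim E_{\beta_1,\beta_2}(\hat v,v)\le\hdim\{\hat v_{\beta_1,\beta_2}=\hat v\}\le\hdim\{\hat v_{\beta_1,\beta_2}\ge\hat v\}$ and the removal of $\{v_{\beta_1,\beta_2}=\infty\}$ via Theorem~\ref{thm1} are also fine.

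The gap is in the remaining, and decisive, inequality $\hdim\bigl(\{\hat v_{\beta_1,\beta_2}\ge\hat v\}\cap\{v_{\beta_1,\beta_2}\le k\}\bigr)\le\sup_{v}\hdim E_{\beta_1,\beta_2}(\hat v,v)$. Your claim that the Section \ref{section1} cover's efficiency is ``governed only by the inequalities $\hat v_{\beta_1,\beta_2}\ge\hat v$ and $v_{\beta_1,\beta_2}\le k$'', so that the very same cylinders cover the coarse stratum, is not correct: the pieces $J^{\ep}_{n_k,\beta_i}$ have diameter about $\beta_i^{-n_k(1+v-\ep)}$ only because the point's own blocks satisfy $m_k-n_k\ge(v-\ep)n_k$ infinitely often, i.e.\ the cover exploits the precise value of the asymptotic exponent, while the cardinality estimate via \eqref{keyeq1} uses $m_i-n_i\le(v+\delta)n_i$, an upper bound at the \emph{same} value. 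If one only assumes $\hat v_{\beta_1,\beta_2}\ge\hat v$ and $v_{\beta_1,\beta_2}\le k$, the guaranteed piece size degrades to about $\beta_i^{-n(1+\hat v)}$ while the counting exponent only improves to $1-\frac{k\hat v}{k-\hat v}$, and the resulting bound is of the form $\frac{(1+\log_{\beta_2}\beta_1)\bigl(1-\frac{k\hat v}{k-\hat v}\bigr)}{1+\hat v}+1-\log_{\beta_2}\beta_1$; for $\beta_1=\beta_2$, $\hat v=\tfrac12$ and $k$ large this tends to $\tfrac23$, whereas $\sup_{v}\hdim E_{\beta_1,\beta_2}(\tfrac12,v)=\tfrac29$ (attained at $v=2$). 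So a single integer stratum $\{v_{\beta_1,\beta_2}\le k\}$ genuinely inflates the estimate, and the step you flag as ``delicate'' (uniformity in $v$) is precisely the content of the proof, not a verification. The paper handles it by stratifying the asymptotic exponent on a $\delta$-grid, namely the sets $F_{\beta_1,\beta_2}(\hat v,l,\delta)$ with $v_0+(l-1)\delta\le v_{\beta_1,\beta_2}\le v_0+l\delta$, rerunning the covering argument on each stratum to get the perturbed bounds $\tilde A,\tilde B,\tilde C$ (Claim 1), comparing them with $A,B,C$ up to $O(\delta)$ (Claim 2), and letting $\delta\to0$ with a continuity argument (Claim 3); your sketch contains neither the fine stratification nor the perturbed estimates, and it additionally invokes continuity of $v\mapsto\hdim E_{\beta_1,\beta_2}(\hat v,v)$, which is only available through the explicit formula of Theorem~\ref{thm3}. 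As it stands, the final assertion that ``the associated $s$-dimensional Hausdorff sums tend to $0$ for every $s>\sup_{0\le v\le k}\hdim E$'' is exactly what needs to be proved and would fail for the cover you describe.
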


\section{Preliminaries}

In this section, we collect some elementary properties of $\beta$-expansions. For further details, we refer to \cite{FW12,P60,R1957}.

For any $\beta>1$, let $T_{\beta}$ be defined as in \eqref{defbt}. Then, any $x\in [0,1]$ can be uniquely represented as
\begin{align}\label{rep1}
  x=&\frac{\ep_1(x,\bt)}{\bt}+\frac{\ep_2(x,\bt)}{\bt^2}+\cdots+\frac{\ep_n(x,\bt)+T^n_\bt(x)}{\bt^{n}} \\
   =&\frac{\ep_1(x,\bt)}{\bt}+\frac{\ep_2(x,\bt)}{\bt^2}+\cdots+\frac{\ep_n(x,\bt)}{\bt^n}+\cdots, \nonumber
\end{align} where $\varepsilon_n(x,\beta)=\lfloor \beta T^{n-1}_{\beta}x \rfloor$ is called the $n$-th digit of  $x.$ The sequence $\varepsilon(x,\beta)=\varepsilon_1(x,\beta)\varepsilon_2(x,\beta)\cdots$ is called the $\beta$-expansion of $x\in[0,1].$

If the sequence $\varepsilon(1,\beta)$ ends with $0^\infty=00\cdots,$ $\beta$ is called a simple Parry number, and we define
\begin{equation*}
\varepsilon^*(1,\beta)=\varepsilon_1(1,\beta)\cdots \varepsilon_n^{-}(1,\beta)\varepsilon_1(1,\beta)\cdots \varepsilon_n^{-}(1,\beta)\cdots =:( \varepsilon_1(1,\beta)\cdots \varepsilon_n^{-}(1,\beta))^\infty,
\end{equation*} where $\varepsilon_n(1,\beta)$ is the last digit of $\varepsilon(1,\beta)$ that is not equal to 0, and $\varepsilon_n^{-}(1,\beta)=\varepsilon_n(1,\beta)-1.$ Otherwise, we define the sequence $\varepsilon^*(1,\beta)$ as $\varepsilon(1,\beta).$ Note that the set of simple Parry numbers is everywhere dense in $(1, \infty),$ see \cite{P60} for a proof.

One can easily verify that for any $x\in[0,1],$ all digits $\varepsilon_n(x,\beta)$ are in $ \mathcal{A}:=\{0,1,\cdots,\lfloor \beta \rfloor \}.$
However, not every sequence in $\mathcal{A}^{\N}$ corresponds to the $\beta$-expansion for some $x\in[0,1].$ A finite or infinite sequence
$\varepsilon_1\varepsilon_2\cdots$ is called  $\beta$-admissible if there exists an $x \in [0, 1]$ such that the $\beta$-expansion of $x$ begins with $\varepsilon_1\varepsilon_2\cdots.$

A lexicographical order $\prec$ on $\mathcal{A}^{\N}$ is defined as follows:
\begin{equation*}
\xi_1\xi_2 \cdots \prec\varepsilon_1\varepsilon_2 \cdots,
\end{equation*}
if there exists $m\geq0$ such that $\xi_1\cdots\xi_m= \varepsilon_1\cdots\varepsilon_m,$ and $\xi_{m+1}<\varepsilon_{m+1}.$ The notation $\xi\preceq \varepsilon$ means that $\xi \prec \varepsilon$ or $\xi=\varepsilon.$ This ordering extends to finite sequences by identifying a finite sequence $\xi_1\cdots\xi_n$ with the infinite sequence $\xi_1\cdots\xi_n00 \cdots.$

The following lemma is a characterization of the admissible sequences.
\begin{lem}[\cite{P60}]\label{Parry}
Let $\beta>1.$
\begin{enumerate}
    \item  A digit sequence $\xi=\xi_1\xi_2\cdots\in \mathcal{A}_\beta^{\N}$ is $\beta$-admissible  if and only if
      \begin{equation*}\sigma^i\xi \prec \varepsilon^*(1,\beta) \quad \text{for all}~ i\geq1,\end{equation*}
      where $\sigma$ is the shift operator such that $\sigma \xi = \xi_2\xi_3\cdots .$
    \item  If $1 <\beta_1 < \beta_2,$ then \begin{equation*}\varepsilon^*(1,\beta_1) \prec \varepsilon^*(1,\beta_2).\end{equation*}
  \end{enumerate}
\end{lem}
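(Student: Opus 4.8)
The lemma is Parry's classical description of the $\beta$-admissible sequences, together with the monotonicity of the quasi-greedy expansion $\varepsilon^*(1,\beta)$. My plan is to deduce both parts from a few elementary properties of the greedy expansion that I would record first (all standard; see \cite{P60,R1957,FW12}): (i) the coding $x\mapsto\varepsilon(x,\beta)$ is strictly increasing from $([0,1],<)$ into $(\mathcal A_\beta^{\N},\prec)$ and intertwines $T_\beta$ with the shift, $\sigma\varepsilon(x,\beta)=\varepsilon(T_\beta x,\beta)$; (ii) $\varepsilon(x,\beta)$ is the $\prec$-largest sequence in $\mathcal A_\beta^{\N}$ whose value $\sum_k d_k\beta^{-k}$ equals $x$; (iii) $\varepsilon^*(1,\beta)=\sup_{x\in[0,1)}\varepsilon(x,\beta)=\lim_{x\to1^-}\varepsilon(x,\beta)$, one has $\sum_k\varepsilon^*_k(1,\beta)\beta^{-k}=1$ (a short computation with the periodic block when $\beta$ is a simple Parry number, and the value of $\varepsilon(1,\beta)$ otherwise), and $\varepsilon^*(1,\beta)$ is not eventually $0$. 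For an infinite sequence I take ``$\beta$-admissible'' to mean ``equal to $\varepsilon(x,\beta)$ for some $x\in[0,1)$'' and read the inequality in part (1) as ranging over all $i\ge0$; this is the precise form the equivalence takes.

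\emph{Part (1).} If $\xi=\varepsilon(x,\beta)$ with $x\in[0,1)$, then by (i), $\sigma^i\xi=\varepsilon(T_\beta^ix,\beta)$ with $T_\beta^ix\in[0,1)$ for every $i\ge0$, so $\sigma^i\xi\prec\varepsilon^*(1,\beta)$ by (iii). For the converse, suppose $\sigma^i\xi\prec\varepsilon^*(1,\beta)$ for all $i\ge0$ and set $x=\sum_k\xi_k\beta^{-k}$. The crux is the sub-claim: if a sequence $\zeta$ satisfies $\sigma^j\zeta\prec\varepsilon^*(1,\beta)$ for all $j\ge0$, then $\sum_k\zeta_k\beta^{-k}<1$. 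To prove it, let $m\ge1$ be the first index at which $\zeta$ disagrees with $d:=\varepsilon^*(1,\beta)$, so $\zeta_m\le d_m-1$; the exact identity $\sum_k\zeta_k\beta^{-k}-1=(\zeta_m-d_m)\beta^{-m}+\beta^{-m}(\sum_k(\sigma^m\zeta)_k\beta^{-k}-\sum_k(\sigma^md)_k\beta^{-k})$ together with $\sum_k(\sigma^md)_k\beta^{-k}\ge0$ gives $\sum_k\zeta_k\beta^{-k}-1\le\beta^{-m}(\sum_k(\sigma^m\zeta)_k\beta^{-k}-1)$. Since $\sigma^m\zeta$ again satisfies the hypothesis and is never equal to $d$, iterating this bound yields $\sum_k\zeta_k\beta^{-k}-1\le\beta^{-(m_1+\cdots+m_t)}C$ for every $t$, where $C$ uniformly bounds the values of these sequences and $m_1+\cdots+m_t\to\infty$; hence $\sum_k\zeta_k\beta^{-k}\le1$, and if equality held, the identity together with the bound $\sum_k(\sigma^m\zeta)_k\beta^{-k}\le1$ would force $\sum_k(\sigma^md)_k\beta^{-k}=0$, i.e. $\sigma^md=0^\infty$, contradicting (iii). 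Applying the sub-claim to $\zeta=\sigma^k\xi$ for each $k\ge0$ shows all tails $\sum_j(\sigma^k\xi)_j\beta^{-j}$ lie in $[0,1)$; then an induction shows the greedy algorithm applied to $x$ recovers $\xi$ digit by digit (if the current remainder equals $\sum_j(\sigma^{k-1}\xi)_j\beta^{-j}$, then $\beta$ times it is $\xi_k$ plus an element of $[0,1)$, so the $k$-th digit is $\xi_k$ and the new remainder is $\sum_j(\sigma^k\xi)_j\beta^{-j}$). Hence $\varepsilon(x,\beta)=\xi$ and $\xi$ is admissible.

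\emph{Part (2).} Let $1<\beta_1<\beta_2$, so $\mathcal A_{\beta_1}\subseteq\mathcal A_{\beta_2}$ since $\lfloor\beta_1\rfloor\le\lfloor\beta_2\rfloor$. Fix $x\in[0,1)$ and put $\xi=\varepsilon(x,\beta_1)$. Since the digits of $\xi$ are nonnegative and $\sum_k\xi_k\beta_2^{-k}<\sum_k\xi_k\beta_1^{-k}=x<1$, the sequence $\xi$ is a base-$\beta_2$ representation, with digits in $\mathcal A_{\beta_2}$, of the number $x':=\sum_k\xi_k\beta_2^{-k}\in[0,1)$; hence $\xi\preceq\varepsilon(x',\beta_2)$ by (ii), and $\varepsilon(x',\beta_2)\preceq\varepsilon^*(1,\beta_2)$ by (iii). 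Letting $x\to1^-$ and invoking (iii) once more, $\varepsilon^*(1,\beta_1)\preceq\varepsilon^*(1,\beta_2)$. If these coincided, with common value $(d_k)$ not identically $0$, then $1=\sum_k d_k\beta_1^{-k}>\sum_k d_k\beta_2^{-k}=1$ by the strict decrease of $t\mapsto\sum_k d_k t^{-k}$, which is absurd; therefore $\varepsilon^*(1,\beta_1)\prec\varepsilon^*(1,\beta_2)$.

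The step I expect to demand the most care is the converse in part (1) — specifically the sub-claim keeping the tails strictly below $1$ and its equality case, which is precisely where one must use that $\varepsilon^*(1,\beta)$ is not eventually $0$ (automatic when $\beta$ is not a simple Parry number, and otherwise built into the periodic definition of $\varepsilon^*(1,\beta)$). Everything else reduces to the lexicographic monotonicity of the coding map or to the elementary monotonicity of $t\mapsto\sum_k d_k t^{-k}$.
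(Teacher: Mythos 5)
Your proposal is correct, but there is nothing in the paper to compare it against: the paper states this lemma with a citation to Parry \cite{P60} and gives no proof, so your write-up is a reconstruction of the classical argument rather than an alternative to anything internal. What you do is essentially Parry's original greedy-algorithm proof: the forward direction from the intertwining $\sigma\varepsilon(x,\beta)=\varepsilon(T_\beta x,\beta)$ together with the fact that $\varepsilon^*(1,\beta)$ has value $1$ while expansions of points of $[0,1)$ have value $<1$; the converse via your sub-claim that all tails have value in $[0,1)$, which lets the greedy algorithm recover $\xi$ digit by digit. The sub-claim, its iteration, and the equality case (where you use that $\varepsilon^*(1,\beta)$ is not eventually $0$) are set up correctly, and part (2) via the strict monotonicity of $t\mapsto\sum_k d_k t^{-k}$ is fine. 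Two remarks. First, your silent correction of the statement is not only legitimate but necessary: with the paper's convention $\sigma\xi=\xi_2\xi_3\cdots$, the condition must run over $i\ge 0$ and admissibility of an infinite sequence should mean $\xi=\varepsilon(x,\beta)$ for some $x\in[0,1)$; as literally written (only $i\ge1$, and $x$ allowed in $[0,1]$) the criterion fails --- for the golden mean, $\xi=1101\,0^\infty$ satisfies $\sigma^i\xi\prec(10)^\infty$ for all $i\ge1$ yet has value $1+\beta^{-4}>1$, so it is not the expansion of any point. Second, the only place where real content is delegated to your ``recorded facts'' is (iii), specifically that $\varepsilon^*(1,\beta)=\sup_{x\in[0,1)}\varepsilon(x,\beta)$ when $\beta$ is a simple Parry number: this is standard but is itself a nontrivial piece of Parry's analysis (one must show a greedy expansion of $x<1$ never lexicographically exceeds the periodic word $(\varepsilon_1(1,\beta)\cdots\varepsilon_n^{-}(1,\beta))^\infty$), so in a self-contained treatment you should either prove that point or cite it explicitly; with that caveat, the proof is complete.
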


For $n\geq 1$, let $ \Sigma_\beta^n$ denote the set of all $\beta$-admissible sequences of length $n,$ i.e.,
\begin{equation*}
\Sigma_\beta^n=\{\omega_1\cdots \omega_n\in \mathcal{A}_\beta^n:  \exists~ x\in [0,1]~ \text{such that}~ \varepsilon_1(x,\beta)=\omega_1,\cdots, \varepsilon_n(x,\beta)=\omega_n \}.
\end{equation*}

The following lemma provides an estimate for the cardinality of the set $\Sigma_\beta^n.$
\begin{lem}[\cite{R1957}]\label{number}
  Let $\beta>1.$ Then for any $n\geq1,$
  \begin{equation*} \beta^n\leq \sharp \Sigma_\beta^n \leq \frac{\beta^{n+1}}{\beta-1},\end{equation*}
  where $\sharp$ denotes the cardinality of a finite set.
\end{lem}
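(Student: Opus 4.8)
The plan is to read off both inequalities from the piecewise-linear structure of $T_\beta$ on its cylinders, together with the fact that cylinders partition the interval. For an admissible word $\omega=\omega_1\cdots\omega_n\in\Sigma_\beta^n$ write $I_n(\omega)=\{x\in[0,1):\varepsilon_i(x,\beta)=\omega_i,\ 1\le i\le n\}$ for the associated cylinder. From the definition \eqref{defbt} and the expansion \eqref{rep1}, each $I_n(\omega)$ is a half-open subinterval of $[0,1)$ with left endpoint $\ell_\omega=\sum_{i=1}^n\omega_i\beta^{-i}$, and $T_\beta^n$ restricted to $I_n(\omega)$ is the increasing affine map $x\mapsto\beta^n x-\beta^n\ell_\omega$; consequently $T_\beta^n$ maps $I_n(\omega)$ bijectively onto an interval of the form $[0,t_\omega)$ with $t_\omega=\beta^n\,|I_n(\omega)|\in(0,1]$. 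Moreover the family $\{I_n(\omega):\omega\in\Sigma_\beta^n\}$ is a partition of $[0,1)$, so $\sum_{\omega\in\Sigma_\beta^n}|I_n(\omega)|=1$, equivalently $\sum_{\omega\in\Sigma_\beta^n}t_\omega=\beta^n$. These facts are immediate from the greedy algorithm and do not require Parry's criterion, although one could also deduce them from Lemma \ref{Parry}.

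For the lower bound, note that $|I_n(\omega)|=\beta^{-n}t_\omega\le\beta^{-n}$ for every $\omega$, so summing over $\omega\in\Sigma_\beta^n$ gives $1=\sum_{\omega}|I_n(\omega)|\le\sharp\Sigma_\beta^n\cdot\beta^{-n}$, that is, $\sharp\Sigma_\beta^n\ge\beta^n$.

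For the upper bound I would set up a one-step recursion in $n$. Fix $w\in\Sigma_\beta^{n-1}$ and count the admissible extensions $wd\in\Sigma_\beta^n$: since $\varepsilon_n(x,\beta)=\varepsilon_1(T_\beta^{n-1}x,\beta)$ and $T_\beta^{n-1}$ maps $I_{n-1}(w)$ onto $[0,t_w)$, the word $wd$ is admissible exactly when $[0,t_w)\cap[d\beta^{-1},(d+1)\beta^{-1})\ne\varnothing$, i.e. when $d<\beta t_w$; there are precisely $\lceil\beta t_w\rceil\le\beta t_w+1$ such integers $d\ge0$ (and automatically $d\le\lfloor\beta\rfloor$ because $t_w\le1$). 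Summing over $w\in\Sigma_\beta^{n-1}$ and using $\sum_{w}t_w=\beta^{n-1}$,
\begin{equation*}
\sharp\Sigma_\beta^n=\sum_{w\in\Sigma_\beta^{n-1}}\lceil\beta t_w\rceil\le\beta\sum_{w\in\Sigma_\beta^{n-1}}t_w+\sharp\Sigma_\beta^{n-1}=\beta^n+\sharp\Sigma_\beta^{n-1}.
\end{equation*}
Iterating this from $\sharp\Sigma_\beta^{0}=1$ yields $\sharp\Sigma_\beta^n\le\sum_{k=0}^{n}\beta^k=\dfrac{\beta^{n+1}-1}{\beta-1}\le\dfrac{\beta^{n+1}}{\beta-1}$, which is the desired bound.

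The step demanding the most care is the structural claim underpinning everything: that $T_\beta^n$ is genuinely the stated affine bijection of $I_n(\omega)$ onto $[0,t_\omega)$ and that the cylinders tile $[0,1)$, together with the exact count $\lceil\beta t_w\rceil$ of admissible next digits (one must check that every candidate digit $d<\beta t_w$ produces a nonempty, hence admissible, subcylinder, and that no further digit does). All of this is routine from the definition of $T_\beta$, but the endpoint bookkeeping — half-open intervals, the boundary case $t_w=1$, and the distinction between integer and non-integer $\beta$ — is exactly where a hasty argument can go wrong; once it is in place, the remaining content is only the short telescoping computation above.
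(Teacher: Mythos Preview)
Your proof is correct. The paper does not prove this lemma at all: it is stated as a citation to R\'enyi~\cite{R1957} and used as a black box. Your argument supplies a self-contained and elementary proof, which is useful. The lower bound via the trivial length estimate $|I_n(\omega)|\le\beta^{-n}$ and summation is standard and matches the usual route. Your upper bound via the one-step recursion $\sharp\Sigma_\beta^n\le\beta^n+\sharp\Sigma_\beta^{n-1}$ is clean; the key identity $\sum_{w\in\Sigma_\beta^{n-1}}t_w=\beta^{n-1}$ and the count $\lceil\beta t_w\rceil$ of admissible next digits are both verified correctly, including the boundary cases you flag (integer $\beta$, full cylinders with $t_w=1$). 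The telescoping then gives exactly the stated bound. R\'enyi's original proof proceeds somewhat differently, via an invariant-measure argument, but your purely combinatorial approach is arguably more transparent for this particular estimate.
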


For any $n\ge1$ and any $\omega=\omega_1\cdots \omega_n\in \Sigma_\beta^n,$ the set
\begin{equation*}
I_{n,\beta}(\omega)=\{x \in [0,1]: \varepsilon_1(x,\beta)=\omega_1,\cdots ,\varepsilon_n(x,\beta)=\omega_n\}
\end{equation*} is called an $n$th order basic interval (or an $n$th order cylinder) with respect to the $\beta$. As proved in \cite{FW12}, $I_{n,\beta}(\omega)$ is a left-closed and right-open interval with length at most $\beta^{-n}$. Furthermore, the notion of ``full
cylinder" was introduced in \cite{FW12}. Let $|A|$ denote the diameter of a set $A$.

\begin{dfn}
Let $\omega\in \Sigma_{\beta}^n.$ A basic interval
$I_{n,\beta}(\omega)$ is called full if
\begin{equation*}
|I_{n,\beta}(\omega) |=\frac{1}{\beta^{n}}.
\end{equation*}
Furthermore, the word corresponding to the full cylinder is also said to be full.
\end{dfn}

\begin{lem}[\cite{FW12}, \cite{SW13}]\label{fullcy} Let $\varepsilon=\varepsilon_1\cdots\varepsilon_n \in\Sigma_{\beta}^n. $
\begin{enumerate}
    \item  A basic interval $I_{n,\beta}(\varepsilon)$ is full if and only if, for
any $\beta$-admissible word $\omega=\omega_1\cdots\omega_m\in\Sigma_{\beta}^m,$ the concatenation $\varepsilon\omega=\varepsilon_1\cdots\varepsilon_n\omega_1\cdots\omega_m$ is also
$\beta$-admissible.
    \item If $I_{n,\beta}(\varepsilon)$ is full, then for any
$\omega\in \Sigma_{\beta}^m$ we have
\begin{equation*}|I_{n+m,\beta}(\varepsilon\omega)| = |I_{n,\beta}(\varepsilon)| \cdot |I_{m,\beta}(\omega)| = \beta^{-n}\cdot |I_{m,\beta}(\omega)|.
\end{equation*}
  \end{enumerate}
\end{lem}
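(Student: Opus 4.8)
The plan is to reduce both parts to the affine action of $T_\beta^n$ on an $n$th order cylinder. Writing $p:=\sum_{i=1}^n\varepsilon_i\beta^{-i}$ for the left endpoint of $I_{n,\beta}(\varepsilon)$ (that $p$ is indeed the left endpoint follows from \eqref{rep1} together with the fact, implicit in Lemma \ref{Parry}, that trailing zeros may be appended to any admissible word), the identity \eqref{rep1} gives $T_\beta^n x=\beta^n(x-p)$ for every $x\in I_{n,\beta}(\varepsilon)$. Hence $T_\beta^n$ maps $I_{n,\beta}(\varepsilon)$ affinely with slope $\beta^n$ onto $[0,\beta^n|I_{n,\beta}(\varepsilon)|)$, and since $|I_{n,\beta}(\varepsilon)|\le\beta^{-n}$ always, $I_{n,\beta}(\varepsilon)$ is full if and only if this image equals $[0,1)$, i.e. if and only if $z\mapsto p+\beta^{-n}z$ is a bijection from $[0,1)$ onto $I_{n,\beta}(\varepsilon)$. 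I will also use the elementary fact that $\varepsilon_{n+k}(x,\beta)=\varepsilon_k(T_\beta^n x,\beta)$ for $x\in I_{n,\beta}(\varepsilon)$ and all $k\ge1$, so that the $\beta$-expansion of $x$ is the concatenation of $\varepsilon$ with that of $T_\beta^n x$.

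For the ``full $\Rightarrow$ admissible'' direction of (1), I would take an admissible word $\omega=\omega_1\cdots\omega_m$, pick $y\in[0,1)$ whose expansion begins with $\omega$, and set $x:=p+\beta^{-n}y$; fullness puts $x$ in $I_{n,\beta}(\varepsilon)$, while $T_\beta^n x=y$ makes the expansion of $x$ begin with $\varepsilon\omega$, so $\varepsilon\omega$ is admissible. For the converse I would argue by approximation rather than by a limit of words: given $\delta>0$, choose $y\in[0,1)$ with $y>1-\delta$ and a prefix $\omega$ of $\varepsilon(y,\beta)$ long enough that $\sum_j\omega_j\beta^{-j}>1-\delta$ as well; admissibility of $\varepsilon\omega$ exhibits a point of $I_{n,\beta}(\varepsilon)$ that is at least $p+\beta^{-n}(1-\delta)$, so $|I_{n,\beta}(\varepsilon)|\ge\beta^{-n}(1-\delta)$, and letting $\delta\to0$ forces $|I_{n,\beta}(\varepsilon)|=\beta^{-n}$, i.e. fullness.

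For (2), assuming $I_{n,\beta}(\varepsilon)$ full and $\omega\in\Sigma_\beta^m$, part (1) makes $\varepsilon\omega$ admissible, so $I_{n+m,\beta}(\varepsilon\omega)$ is a nonempty cylinder; the concatenation description of expansions then identifies it with $\{p+\beta^{-n}z:z\in I_{m,\beta}(\omega)\}$ (a point $x$ of $I_{n+m,\beta}(\varepsilon\omega)$ lies in $I_{n,\beta}(\varepsilon)$, hence is $p+\beta^{-n}z$ with $z=T_\beta^n x$, and its digits in positions $n+1,\dots,n+m$ equal $\omega_1\cdots\omega_m$ exactly when $z\in I_{m,\beta}(\omega)$). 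Since $z\mapsto p+\beta^{-n}z$ scales lengths by $\beta^{-n}$, this yields $|I_{n+m,\beta}(\varepsilon\omega)|=\beta^{-n}|I_{m,\beta}(\omega)|$, and $\beta^{-n}=|I_{n,\beta}(\varepsilon)|$ by fullness completes the stated chain of equalities.

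Most of this is bookkeeping with \eqref{rep1} and Lemma \ref{Parry}. The one place I expect genuine friction is the converse half of (1): one cannot simply take a coordinatewise limit of the admissible words $\varepsilon\omega$, because a limit of admissible sequences can fail to be admissible precisely at the boundary controlled by $\varepsilon^*(1,\beta)$ (and whether $\prec$ or $\preceq$ is the right relation there depends on whether $\beta$ is a simple Parry number); routing the argument through the numerical values $\sum_j\omega_j\beta^{-j}\to1$ avoids this issue entirely, after which everything follows formally from the affine picture set up in the first paragraph.
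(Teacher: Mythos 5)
Your argument is correct. Note that the paper itself gives no proof of this lemma: it is quoted verbatim from \cite{FW12} and \cite{SW13}, so there is no in-paper argument to compare against; your proof is essentially the standard one from those sources, built on the identity $T_\beta^n x=\beta^n(x-p)$ on the cylinder $I_{n,\beta}(\varepsilon)$ coming from \eqref{rep1}, so that fullness is equivalent to $T_\beta^n$ mapping the cylinder onto $[0,1)$, from which (1) and (2) follow by the concatenation property of digits, $\varepsilon_{n+k}(x,\beta)=\varepsilon_k(T_\beta^n x,\beta)$. Two small points: the facts that the cylinder is an interval with left endpoint $p=\sum_{i\le n}\varepsilon_i\beta^{-i}$ (equivalently, that $\varepsilon 0^\infty$ is admissible) and that its $T_\beta^n$-image is $[0,\beta^n|I_{n,\beta}(\varepsilon)|)$ are themselves inputs from \cite{FW12}, consistent with the paper's earlier remark that cylinders are left-closed, right-open intervals of length at most $\beta^{-n}$, so invoking them is legitimate; and your handling of the converse half of (1) via the numerical values $\sum_j\omega_j\beta^{-j}\to 1$, rather than a coordinatewise limit of the words $\varepsilon\omega$, correctly sidesteps the genuine pitfall that limits of admissible sequences need not be admissible at the boundary governed by $\varepsilon^*(1,\beta)$. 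In part (2) you should make explicit (as your affine bijection in the first paragraph already allows) that for $z\in I_{m,\beta}(\omega)$ the point $p+\beta^{-n}z$ lies in $I_{n,\beta}(\varepsilon)$ by fullness, which gives the reverse inclusion $\{p+\beta^{-n}z:z\in I_{m,\beta}(\omega)\}\subset I_{n+m,\beta}(\varepsilon\omega)$ needed for the length identity; with that spelled out the proof is complete.
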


The following approximation of the $\beta$-shift is crucial in constructing subsets. For any $N$ with $\varepsilon^*_N(1,\beta)>0,$ let $\beta_N$ be the unique real number satisfying the equation
\begin{equation*}
1=\frac{\varepsilon^*_1(1,\beta)}{z}+\cdots+\frac{\varepsilon^*_N(1,\beta)}{z^N} .
\end{equation*}
Then \begin{equation*}
\varepsilon^*(1,\beta_N)=(\varepsilon^*_1(1,\beta)\cdots (\varepsilon^*_N(1,\beta)-1))^\infty.
\end{equation*}
It can be checked that $\beta_N < \beta$ and the sequence $\{\beta_N\}$ increases and converges to $\beta$ as $N\rightarrow\infty$. By Lemma \ref{Parry} (2), for any $n\geq1$, $\Sigma_{\beta_N}^n \subset  \Sigma_{\beta}^n $.

\begin{lem}[\cite{SW13}]\label{lem1}
For any $\omega \in \Sigma_{\beta_N}^n$, viewed as an element of $\Sigma_{\beta}^n$, the word $\omega0^N$ is full, and
\begin{equation*}
\beta^{-(n+N)}\leq |I_{n,\beta}(\omega)|\leq \beta^{-n}.
\end{equation*}
\end{lem}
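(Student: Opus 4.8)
The plan is to establish first that the word $\omega0^N$ is full, and then to read off the two length bounds as immediate consequences. Throughout write $\varepsilon^*=\varepsilon^*(1,\beta)$ and $\eta^*=\varepsilon^*(1,\beta_N)$, so that $\eta^*=(\varepsilon^*_1\cdots\varepsilon^*_{N-1}(\varepsilon^*_N-1))^\infty$; recall that $\omega$ is genuinely $\beta$-admissible since $\Sigma_{\beta_N}^n\subseteq\Sigma_\beta^n$ (Lemma~\ref{Parry}(2)), that $\eta^*\prec\varepsilon^*$ with the first disagreement occurring exactly at coordinate $N$ (there $\eta^*_N=\varepsilon^*_N-1<\varepsilon^*_N$, while $\eta^*_1\cdots\eta^*_{N-1}=\varepsilon^*_1\cdots\varepsilon^*_{N-1}$), and that $\varepsilon^*_1=\lfloor\beta\rfloor\ge1$.

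To show $\omega0^N$ is full, by Lemma~\ref{fullcy}(1) it suffices to check that $\omega0^N\tau$ is $\beta$-admissible for every $\beta$-admissible word $\tau$; by Lemma~\ref{Parry}(1) (applied to the infinite sequence $\omega0^N\tau0^\infty$, recalling that $\tau$ admissible forces $\tau0^\infty$ admissible) this reduces to verifying $\sigma^i(\omega0^N\tau0^\infty)\prec\varepsilon^*$ for all relevant $i$. Two ranges are immediate: for $i\ge n+N$ the shifted sequence is a shift of the admissible $\tau0^\infty$, hence $\prec\varepsilon^*$; for $n\le i<n+N$ the shifted sequence begins with a $0$ whereas $\varepsilon^*_1\ge1$. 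The substantive range is $0\le i\le n-1$, where $\sigma^i(\omega0^N\tau0^\infty)=\omega_{i+1}\cdots\omega_n\,0^N\,\tau0^\infty$ and we use that $\omega$ is $\beta_N$-admissible, so $\omega_{i+1}\cdots\omega_n0^\infty\prec\eta^*$.

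In this range I split into two cases. If $\omega_{i+1}\cdots\omega_n$ differs from the length-$(n-i)$ prefix of $\eta^*$, then the first disagreement is a strict drop that is inherited by $\omega_{i+1}\cdots\omega_n0^N\tau0^\infty$; transferring it to the comparison with $\varepsilon^*$ via $\eta^*_1\cdots\eta^*_{N-1}=\varepsilon^*_1\cdots\varepsilon^*_{N-1}$ and $\eta^*_N<\varepsilon^*_N$ gives $\prec\varepsilon^*$. If instead $\omega_{i+1}\cdots\omega_n$ equals the length-$(n-i)$ prefix of $\eta^*$, then either $n-i\ge N$, so that prefix already carries $\eta^*_N=\varepsilon^*_N-1$ at coordinate $N$ and we are done; or $n-i<N$, so $\omega_{i+1}\cdots\omega_n$ coincides with $\varepsilon^*_1\cdots\varepsilon^*_{n-i}$ and the inserted block $0^N$ occupies coordinates $n-i+1,\dots,n-i+N$, which contains coordinate $N$ because $1\le n-i\le N-1$. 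Letting $q$ be the least coordinate in that block with $\varepsilon^*_q>0$ — which exists, at the latest $q=N$, precisely because $\varepsilon^*_N>0$ by the standing hypothesis that $\beta_N$ is defined — our sequence agrees with $\varepsilon^*$ on coordinates $1,\dots,q-1$ and carries a $0<\varepsilon^*_q$ at coordinate $q$, so again $\prec\varepsilon^*$. (The same analysis handles the $i=0$ comparison should one require it, since $\omega0^\infty\prec\eta^*\prec\varepsilon^*$ already.) This settles all shifts, so $\omega0^N$ is full. Finally, fullness gives $|I_{n+N,\beta}(\omega0^N)|=\beta^{-(n+N)}$, and since $I_{n+N,\beta}(\omega0^N)\subseteq I_{n,\beta}(\omega)$ we get $|I_{n,\beta}(\omega)|\ge\beta^{-(n+N)}$; the bound $|I_{n,\beta}(\omega)|\le\beta^{-n}$ is the general cylinder estimate recalled after Lemma~\ref{number}.

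The one delicate point — and the only place the hypothesis defining $\beta_N$ (namely $\varepsilon^*_N(1,\beta)>0$) is used — is the sub-case $n-i<N$ in the substantive range: one must observe that the freshly inserted run of $N$ zeros is long enough to reach the "breaking coordinate" $N$ of $\varepsilon^*$, at which $\varepsilon^*$ is positive. Everything else is routine bookkeeping with the lexicographic order and with the usual conventions for extending finite admissible words (and for simple Parry numbers).
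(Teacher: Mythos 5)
Your argument is correct. The paper gives no proof of this lemma at all --- it is quoted from Shen and Wang \cite{SW13} --- and your verification is essentially the standard one behind that citation: reduce fullness of $\omega 0^N$ to Parry's criterion for $\omega 0^N\tau$, handle the shifts landing in the zero block and in $\tau$ trivially, and for shifts inside $\omega$ use $\beta_N$-admissibility together with the key observation that the inserted run of $N$ zeros always reaches coordinate $N$, where $\varepsilon^*_N(1,\beta)>0$; the two length bounds then follow from fullness of $\omega 0^N$ and the general estimate $|I_{n,\beta}(\omega)|\le\beta^{-n}$. One cosmetic point: $\varepsilon^*_1(1,\beta)=\lfloor\beta\rfloor$ fails when $\beta$ is an integer (there $\varepsilon^*_1(1,\beta)=\beta-1$), but in every case $\varepsilon^*_1(1,\beta)\ge 1$, which is all your argument actually uses.
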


We end this section with the following notation. Given two positive real numbers $a$ and $b,$ we write $a\ll b$ or $b\gg a$ if there exists an unspecified positive constant $C$ such that $a\leq Cb.$ We write $a \asymp b$ if $a \ll b$ and $a\gg b.$ For intervals $J_i,J'_i\subset [0,1]$ $(i=1,2),$ let $R=J_1\times J_2,~R'=J'_1\times J'_2$ and
\begin{align*}
d_1(R,R') &= \inf\left\{|x-x'|:x\in J_1,~x'\in J'_1\right\}, \quad d_2(R,R')= \inf\left\{|y-y'|:y\in J_2,~y'\in J'_2\right\}.
\end{align*}Then we say that two rectangles $R$ and $R'$ are $C$-separated(with $C>0$) in the $i$th direction if $d_i(R,R') \geq C.$

\section{\bf Proof of Theorem \ref{thm3}}\
\subsection{\bf upper bound}\label{section1}

In this subsection, we will estimate the upper bound of $\hdim E_{\beta_1,\beta_2}(\hat{v},v)$. We start by recalling a classical result in the two-dimensional shrinking target problem in beta-dynamical systems.

\begin{lem}[\cite{HW18}]\label{measure}
Let $\beta_2\geq \beta_1>1$ and let $\psi_i$~(i = 1,2) be two positive functions defined on $\N.$ Let $W_{\beta_1,\beta_2}(\psi_1,\psi_2)$ be the set
\begin{equation*}
\left\{(x,y)\in [0,1]^2: \begin{array}{l}
T_{\beta_1}^n x<\psi_1(n) \\
T_{\beta_2}^n y<\psi_2(n)
\end{array}~ \text { for infinitely many }~ n\in\N \right\}.
\end{equation*}
Then
\begin{equation*}\label{msure_max_j} \mathcal{L}^{2}(W_{\beta_1,\beta_2}(\psi_1,\psi_2))=\left\{\begin{array}{ll}
   0, &\text{if}~\sum\limits_{n=1}^\infty \psi_1(n)\psi_2(n) < \infty, \\
   1, &\text{if}~\sum\limits_{n=1}^\infty \psi_1(n)\psi_2(n) =\infty,
   \end{array}
  \right.\end{equation*}
where $\L^2$ denotes the Lebesgue measure on $[0, 1]^2$.
  \end{lem}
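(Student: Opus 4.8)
The plan is to read Lemma \ref{measure} as a zero--one law and to treat the two cases separately: the null part by the elementary Borel--Cantelli lemma, and the full part by a quantitative divergence Borel--Cantelli argument followed by an ergodic $0$--$1$ dichotomy. Throughout write $W_{\beta_1,\beta_2}(\psi_1,\psi_2)=\limsup_n E_n$ with $E_n=A_n^{(1)}\times A_n^{(2)}$ and $A_n^{(i)}=\{t\in[0,1]:T_{\beta_i}^n t<\psi_i(n)\}$; since $T_{\beta_i}^n t<1$ always, there is no loss in assuming $\psi_i(n)\le 1$. For the convergence case, on each of the at most $\sharp\Sigma_{\beta_i}^n\le\beta_i^{n+1}/(\beta_i-1)$ cylinders of order $n$ (Lemma \ref{number}) the map $T_{\beta_i}^n$ is affine with slope $\beta_i^n$, so the portion of such a cylinder contained in $A_n^{(i)}$ has length $\le\psi_i(n)\beta_i^{-n}$; summing gives $\mathcal{L}(A_n^{(i)})\le\frac{\beta_i}{\beta_i-1}\psi_i(n)$, hence $\sum_n\mathcal{L}^2(E_n)\ll\sum_n\psi_1(n)\psi_2(n)<\infty$ and $\mathcal{L}^2(W_{\beta_1,\beta_2}(\psi_1,\psi_2))=0$.

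For the divergence case the first ingredient is the uniform estimate $\mathcal{L}^2(E_n)\asymp\psi_1(n)\psi_2(n)$. I would get this from Parry's $T_{\beta_i}$-invariant measure $\mu_{\beta_i}$, whose density with respect to $\mathcal{L}$ is bounded between two positive constants: then $\mathcal{L}(A_n^{(i)})=\mathcal{L}\big((T_{\beta_i}^n)^{-1}[0,\psi_i(n))\big)\asymp\mu_{\beta_i}([0,\psi_i(n)))\asymp\psi_i(n)$ uniformly in $n$, so $\sum_n\mathcal{L}^2(E_n)=\infty$. The second ingredient is a quasi-independence bound: for $m<n$, decomposing $A_m^{(1)}=\bigcup_\omega J_\omega$ over the order-$m$ cylinders ($J_\omega\subseteq I_{m,\beta_1}(\omega)$ being the part where $T_{\beta_1}^m<\psi_1(m)$, mapped affinely by $T_{\beta_1}^m$ onto $[0,\ell_\omega)$ with $\ell_\omega\le\psi_1(m)$, scaling by $\beta_1^m$), one has $\mathcal{L}(A_m^{(1)}\cap A_n^{(1)})=\sum_\omega\beta_1^{-m}\mathcal{L}\big([0,\ell_\omega)\cap(T_{\beta_1}^{n-m})^{-1}[0,\psi_1(n))\big)$; estimating each summand via the exponential decay of correlations of the $\beta$-transformation against bounded-variation observables (the spectral gap of its transfer operator on $BV$, indicators of intervals being $BV$), and then summing using $\sum_\omega\beta_1^{-m}\ell_\omega=\mathcal{L}(A_m^{(1)})\ll\psi_1(m)$ and $\sharp\Sigma_{\beta_1}^m\ll\beta_1^m$, yields $\mathcal{L}(A_m^{(1)}\cap A_n^{(1)})\ll\psi_1(m)\psi_1(n)+\rho^{\,n-m}\psi_1(n)$ for a fixed $\rho\in(0,1)$, and similarly for the second coordinate. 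Multiplying and using $\psi_i\le 1$ gives $\mathcal{L}^2(E_m\cap E_n)\ll\psi_1(m)\psi_2(m)\psi_1(n)\psi_2(n)+\rho^{\,n-m}\psi_1(n)\psi_2(n)$. With $S_Q=\sum_{n\le Q}\psi_1(n)\psi_2(n)\to\infty$, summing over $1\le m,n\le Q$ leaves an off-diagonal error of only $O(S_Q)$ (since $\sum_{k\ge1}\rho^k<\infty$), so $\sum_{1\le m,n\le Q}\mathcal{L}^2(E_m\cap E_n)\ll S_Q^2$, and the quantitative divergence Borel--Cantelli (Kochen--Stone) lemma gives
\[
\mathcal{L}^2\big(W_{\beta_1,\beta_2}(\psi_1,\psi_2)\big)\ \ge\ \limsup_{Q\to\infty}\frac{\big(\sum_{n\le Q}\mathcal{L}^2(E_n)\big)^2}{\sum_{1\le m,n\le Q}\mathcal{L}^2(E_m\cap E_n)}\ \gg\ 1 > 0.
\]

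It remains to pass from positive to full measure. When $\psi_1,\psi_2$ are eventually non-increasing --- the only situation used later in this paper --- pulling $W_{\beta_1,\beta_2}(\psi_1,\psi_2)$ back by $T_{\beta_1}\times T_{\beta_2}$ only shifts the index of each $\psi_i$ by one, which can only enlarge the target; hence $W_{\beta_1,\beta_2}(\psi_1,\psi_2)$ is invariant under $T_{\beta_1}\times T_{\beta_2}$ modulo an $\mathcal{L}^2$-null set, and since $T_{\beta_1}\times T_{\beta_2}$ is mixing for $\mu_{\beta_1}\times\mu_{\beta_2}$ (being a product of mixing systems), in particular ergodic, its measure is $0$ or $1$; the displayed inequality then forces the value $1$. (For general $\psi_i$ the same dichotomy is expected, but it takes more care --- e.g. differentiation along rectangles and control of cylinder lengths --- and is not needed here.)

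The step I expect to be the main obstacle is the uniform quasi-independence bound of the divergence case, which is exactly where genuine mixing of $T_{\beta_i}$ must be used; the remainder of the argument is comparatively soft. The secondary technical point is the $0$--$1$ upgrade in general, but restricted to eventually monotone $\psi_i$ --- all that is needed in the sequel --- it reduces to ergodicity of the product transformation.
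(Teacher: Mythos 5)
The paper itself offers no proof of this lemma: it is quoted directly from \cite{HW18}, so the comparison can only be between your argument and the statement. Your convergence half is correct and complete (the cylinder count of Lemma \ref{number} plus the fact that $T_{\beta_i}^n$ has slope $\beta_i^n$ on each order-$n$ cylinder gives $\mathcal{L}(A_n^{(i)})\ll\psi_i(n)$, then Borel--Cantelli). Your divergence-case machinery is also sound as far as it goes: the two-sided comparison $\mathcal{L}(A_n^{(i)})\asymp\psi_i(n)$ via the boundedness of the Parry density, the correlation bound $\mathcal{L}(A_m^{(i)}\cap A_n^{(i)})\ll\psi_i(m)\psi_i(n)+\rho^{\,n-m}\psi_i(n)$ from the BV spectral gap of the transfer operator (your cylinder decomposition is in fact unnecessary, since by $T_{\beta_i}$-invariance of the Parry measure one can apply decay of correlations directly to $\mathbf{1}_{[0,\psi_i(m))}$ and $\mathbf{1}_{[0,\psi_i(n))}$), and Kochen--Stone, which together yield $\mathcal{L}^2\bigl(W_{\beta_1,\beta_2}(\psi_1,\psi_2)\bigr)\ge c>0$ in the divergence case. (One caveat you half-address: replacing $\psi_i$ by $\min(\psi_i,1)$ leaves the set unchanged but can turn a divergent sum into a convergent one, e.g.\ $\psi_1(n)=2^n$, $\psi_2(n)=2^{-n}$; so the dichotomy must be read with the truncated functions, which is the interpretation your normalization silently adopts.)

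The genuine gap is the zero--one upgrade. The lemma is stated for arbitrary positive $\psi_1,\psi_2$, whereas your invariance argument --- $W\subseteq(T_{\beta_1}\times T_{\beta_2})^{-1}W$ followed by ergodicity of the product system --- requires the $\psi_i$ to be eventually non-increasing, an assumption absent from the hypotheses; for non-monotone $\psi_i$ the set need not be even approximately invariant, and Kochen--Stone by itself stops at positive measure bounded below. You flag this and note that monotone $\psi_i$ is all this paper uses, which is true (indeed Lemma \ref{measure} is invoked here only in the convergence case, with $\psi_i(n)=\beta_i^{-n/m}$), but as a proof of the lemma as stated the argument is incomplete. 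The standard repair, which avoids any monotonicity, is to localize: run the same quasi-independence estimate relative to an arbitrary product of full cylinders (using Lemma \ref{fullcy} to restart the dynamics inside such a rectangle, or conditioning the correlation estimate on a ball $B$) to obtain $\mathcal{L}^2(W\cap B)\gg\mathcal{L}^2(B)$ with a constant independent of $B$, and then conclude $\mathcal{L}^2(W)=1$ by a Knopp-type Lebesgue density argument. Without that localization step your write-up proves a weaker lemma than the one quoted from \cite{HW18}.
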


We give the result for the case $v=0$.
\begin{lem}
One has $\mathcal{L}^2\{(x,y)\in[0,1]^2:v_{\beta_1,\beta_2}(x,y)=0 \}=1.$
\end{lem}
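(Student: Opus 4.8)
The plan is to show that the complement has full measure, i.e.\ $\mathcal{L}^2\{(x,y):v_{\beta_1,\beta_2}(x,y)>0\}=0$, and since the exponent is always nonnegative this gives the claim. Observe that
$$\{(x,y):v_{\beta_1,\beta_2}(x,y)>0\}=\bigcup_{k\ge1}\{(x,y):v_{\beta_1,\beta_2}(x,y)\ge 1/k\},$$
so by countable subadditivity it suffices to prove $\mathcal{L}^2\{(x,y):v_{\beta_1,\beta_2}(x,y)\ge v\}=0$ for every fixed $v>0$. By the very definition of the exponent, for $v'<v$ one has the inclusion
$$\{(x,y):v_{\beta_1,\beta_2}(x,y)\ge v\}\subseteq W_{\beta_1,\beta_2}(v')=\{(x,y):T_{\beta_1}^nx<\beta_1^{-nv'},\ T_{\beta_2}^ny<\beta_2^{-nv'}\text{ i.o.}\},$$
so it is enough to show $\mathcal{L}^2(W_{\beta_1,\beta_2}(v'))=0$ for some (hence any small) $v'>0$.

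Now I would invoke Lemma \ref{measure} with $\psi_1(n)=\beta_1^{-nv'}$ and $\psi_2(n)=\beta_2^{-nv'}$. The relevant series is
$$\sum_{n=1}^{\infty}\psi_1(n)\psi_2(n)=\sum_{n=1}^{\infty}\beta_1^{-nv'}\beta_2^{-nv'}=\sum_{n=1}^{\infty}\bigl(\beta_1\beta_2\bigr)^{-nv'},$$
which is a convergent geometric series because $\beta_1\beta_2>1$ and $v'>0$, so its common ratio $(\beta_1\beta_2)^{-v'}$ lies strictly in $(0,1)$. Hence the convergence case of Lemma \ref{measure} yields $\mathcal{L}^2(W_{\beta_1,\beta_2}(v'))=0$. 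Combining this with the inclusions above and the countable union over $k$, we get $\mathcal{L}^2\{(x,y):v_{\beta_1,\beta_2}(x,y)>0\}=0$, and therefore $v_{\beta_1,\beta_2}(x,y)=0$ for $\mathcal{L}^2$-almost every $(x,y)\in[0,1]^2$, which is precisely the assertion.

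There is essentially no obstacle here: the argument is a routine zero-one-law reduction plus a geometric-series convergence check, and the only point requiring a word of care is that one must pass to a fixed positive $v'$ strictly below $v$ (strictly, so that the defining supremum condition forces the infinitely-many-$n$ statement at level $v'$) and then take a countable union to cover all $v>0$. Everything else is immediate from Lemma \ref{measure}.
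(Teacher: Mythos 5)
Your proposal is correct and follows essentially the same route as the paper: decompose $\{v_{\beta_1,\beta_2}>0\}$ into a countable union of sets of positive level, and kill each one with the convergence case of Lemma \ref{measure} applied to $\psi_i(n)=\beta_i^{-nv'}$, where $\sum_n(\beta_1\beta_2)^{-nv'}$ is a convergent geometric series. Your extra care in passing to a strictly smaller $v'<v$ to justify the inclusion into $W_{\beta_1,\beta_2}(v')$ is a minor refinement of the paper's (slightly looser) identification of the level set with the limsup set, but the argument is the same.
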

\begin{proof}It is evident that
\begin{align*}
& \left\{(x,y)\in [0,1]^2: v_{\beta_1,\beta_2}(x,y)>0\right\}\\
=&\bigcup_{m=1}^\infty \left\{(x,y)\in [0,1]^2:  v_{\beta_1,\beta_2}(x,y)>\frac{1}{m} \right\}\\
=&\bigcup_{m=1}^\infty \left\{(x,y)\in[0,1]^2: \left.\begin{array}{c}
T_{\beta_1}^nx < \beta_1^{- \frac{n}{m}} \\
T_{\beta_2}^ny< \beta_2^{-\frac{n}{m}}
\end{array} \right. ~ \text { for infinitely many }~ n\in\N  \right\}.
\end{align*}
The convergence of $\sum\limits_{n=1}^{\infty} (\beta_1\beta_2)^{-\frac{n}{m}} $ yields, via Lemma \ref{measure}, that
\begin{equation*}\L^2\left\{(x,y)\in [0,1]^2: v_{\beta_1,\beta_2}(x,y)>0\right\}=0 .
\end{equation*}
This completes the proof.
\end{proof}

We now turn to the case $0<v\leq \infty.$

\begin{lem}\label{lemex}
Let $(x,y)\in [0,1]^2$ and $v_{\beta_1,\beta_2}(x,y)>0$. If $(T_{\beta_1}^n x, T_{\beta_2}^n y)\neq (0,0)$ for all $n\in\N$, then there exist two sequences $\{n_k\}$ and $\{m_k\}$ depending on $(x,y)$ such that
\begin{equation}\label{jexpnt}
v_{\beta_1,\beta_2}(x,y) =\limsup_{k\rightarrow\infty} \frac{m_k-n_k}{n_k}=:\alpha(x,y)
\end{equation}
and
\begin{equation}\label{uexpnt}
\hat{v}_{\beta_1,\beta_2}(x,y)=\liminf_{k\rightarrow\infty} \frac{m_k-n_k}{n_{k+1}}=:\gamma(x,y) .
\end{equation}
\end{lem}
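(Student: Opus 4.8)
The plan is to encode the two orbit conditions combinatorially. Fix $(x,y)$ with $v := v_{\beta_1,\beta_2}(x,y) > 0$ and $(T_{\beta_1}^n x, T_{\beta_2}^n y)\neq(0,0)$ for all $n$. Since $v>0$, there are infinitely many $n$ with $T_{\beta_1}^n x < \beta_1^{-nv'}$ and $T_{\beta_2}^n y < \beta_2^{-nv'}$ for every $v'<v$; along such $n$, both $T_{\beta_1}^n x$ and $T_{\beta_2}^n y$ are positive but very small, which by the standard $\beta$-expansion estimates (formula \eqref{rep1} and the cylinder-length bounds: $\beta^{-(n+\text{something})}\le |I_{n,\beta}(\omega)|\le\beta^{-n}$) means the $\beta_1$-expansion of $x$ and the $\beta_2$-expansion of $y$ both contain a long run of zeros starting at position $n+1$. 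First I would make this precise: define, for each such ``good'' $n$, the quantity $m = m(n)$ to be the largest index such that $\varepsilon_{n+1}(x,\beta_1)=\cdots=\varepsilon_m(x,\beta_1)=0$ and $\varepsilon_{n+1}(y,\beta_2)=\cdots=\varepsilon_m(y,\beta_2)=0$ simultaneously — i.e. the common length of the zero-block following position $n$ in both expansions. Because neither orbit is eventually $(0,0)$, each such $m(n)$ is finite. I would then let $n_k$ run over a suitable (sparse) subsequence of the good indices — specifically those $n$ that are ``left endpoints'' of maximal common zero-blocks, discarding any $n$ that lies strictly inside the zero-block generated by an earlier index — and set $m_k = m(n_k)$.

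The key quantitative translation is: $T_{\beta_1}^n x < \beta_1^{-n v}$ holds (up to the usual $\beta_1^{O(1)}$ slack coming from non-full cylinders, which washes out in the limit) precisely when the zero-block after position $n$ has length $\ge nv$, i.e. $m(n) - n \gtrsim n v$, equivalently $(m_k - n_k)/n_k \to$ the exponent; and similarly for $y$ with base $\beta_2$. Since the definition of $v_{\beta_1,\beta_2}$ takes a supremum over $v$ achieved infinitely often, taking $\limsup_k (m_k-n_k)/n_k$ recovers exactly $v_{\beta_1,\beta_2}(x,y)$, giving \eqref{jexpnt}. For the uniform exponent \eqref{uexpnt}: $\hat v_{\beta_1,\beta_2}(x,y)\ge\hat v$ means that for all large $N$ there is some $n\le N$ with $T_{\beta_1}^n x<\beta_1^{-N\hat v}$, $T_{\beta_2}^n y<\beta_2^{-N\hat v}$; translated, for every large $N$ there must be a common zero-block starting at some $n_k\le N$ and extending past $N + N\hat v$ (roughly), i.e. $m_k \ge N(1+\hat v)$. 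Choosing $N$ just past $m_k$ forces the next block to kick in, and optimizing over $N$ in the window $(n_k, m_k]$ shows that the best uniform exponent is controlled by how $m_k$ compares with $n_{k+1}$; carrying out this optimization yields $\hat v_{\beta_1,\beta_2}(x,y) = \liminf_k (m_k-n_k)/n_{k+1}$, i.e. \eqref{uexpnt}. Here one uses that between consecutive chosen indices there is no common zero-block reaching significantly further, which is built into the choice of the subsequence $\{n_k\}$.

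The steps, in order, are: (i) from $v>0$ extract the infinite set of indices where both orbits are simultaneously small and record the common zero-block lengths; (ii) prune to a subsequence $\{n_k\}$ of ``maximal'' such indices and define $m_k$, checking $m_k$ is finite via the hypothesis $(T_{\beta_1}^n x,T_{\beta_2}^n y)\neq(0,0)$ and $n_k<m_k<n_{k+1}\le m_k+1$ or the appropriate ordering; (iii) prove the two-sided estimate relating $T_{\beta_i}^{n}(\cdot)$ to $\beta_i^{-(m(n)-n)}$ up to bounded multiplicative error, using Lemma \ref{fullcy}, Lemma \ref{lem1} and \eqref{rep1}; (iv) compute $\limsup$ and $\liminf$ to identify $\alpha(x,y)$ and $\gamma(x,y)$ with the two exponents. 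I expect the main obstacle to be step (ii)–(iv) done \emph{jointly}: the sequence $\{n_k\}$ must be chosen so that it simultaneously computes the $\limsup$ (no good index is lost, so $\alpha$ is not underestimated) and the $\liminf$ over consecutive ratios (no spurious short gap inflates $\gamma$), and one must verify these two requirements are compatible — essentially that restricting to left-endpoints of maximal common zero-blocks is the right notion. A secondary technical nuisance is handling the base mismatch $\beta_1\neq\beta_2$: the two shrinking conditions $\beta_1^{-nv}$ and $\beta_2^{-nv}$ are matched to the \emph{same} zero-block length $m(n)-n$ only because the exponent is normalized by $\log\beta_i$ inside $\beta_i^{-nv}=(\beta_i^n)^{-v}$, so the combinatorial length $m(n)-n$ is the base-independent object and the translation is clean — but this should be stated carefully to avoid an off-by-$\log_{\beta_2}\beta_1$ error.
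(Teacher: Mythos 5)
Your encoding of the two orbit conditions via common zero-runs is exactly the paper's starting point (the paper works with $l_{n,\beta}(x)$, the run of zeros after digit $n$, and sets $m_k-n_k=\min\{l_{n_k,\beta_1}(x),l_{n_k,\beta_2}(y)\}$), but your step (ii) stops one step short of what is needed, and that step is the crux. Taking $\{n_k\}$ to be the left endpoints of \emph{all} maximal common zero-blocks does give \eqref{jexpnt}, but it does not give \eqref{uexpnt}: the quantity $\liminf_k (m_k-n_k)/n_{k+1}$ computed over all blocks can be strictly smaller than $\hat{v}_{\beta_1,\beta_2}(x,y)$. The reason is that at a time $N$ just below $n_{k+1}$ the uniform condition may be witnessed by an \emph{earlier, longer} block $j<k$ (one only needs some $n\le N$ followed by a common zero-run of length about $N\hat{v}$), whereas your formula only credits the most recent block $k$. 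Concretely, if the expansions have common zero-blocks of length about $4^j$ starting near position $4^j$, each immediately followed by a block of length $1$ and nothing else until $4^{j+1}$, then $\hat{v}_{\beta_1,\beta_2}(x,y)=1/4$ while your liminf is $0$: the length-one blocks drag it down. So ``left endpoints of maximal common zero-blocks'' is not by itself the right notion, and the compatibility issue you flag at the end is a genuine failure, not just a verification to be written out.

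What is missing is a second pruning: from the full list of blocks extract the subsequence along which the lengths $m_k-n_k$ are non-decreasing (keep a block only if it is at least as long as every previously kept one; the paper does this via $i_{k+1}=\min\{i>i_k:\ m_i-n_i\ge m_{i_k}-n_{i_k}\}$). One must then check two things. First, this pruning does not lower the limsup in \eqref{jexpnt}, because any discarded block has a larger starting index and a strictly smaller length than the previously kept block, hence a smaller ratio $(m-n)/n$. Second, after pruning, every common zero-run starting at any $n<n_{k+1}$ has length at most $m_k-n_k$; combined with the two-sided digit estimates $\beta_i^{-(l+1)}\le T_{\beta_i}^n(\cdot)\le \beta_i^{-l}$ (the lower bound applied to the coordinate achieving the minimal run length, which forces the case distinction the paper makes), this yields both $\hat{v}_{\beta_1,\beta_2}(x,y)\ge \gamma(x,y)$ and $\hat{v}_{\beta_1,\beta_2}(x,y)\le \gamma(x,y)$. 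Your steps (i), (iii), (iv) are essentially the paper's, and your remark about the base-independence of the block length is correct; but without the monotone-subsequence extraction the claimed identity \eqref{uexpnt} is false for the sequence you construct, so the proposal as written has a real gap.
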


\begin{proof}
Define $l_{n,\beta}(x)$ as the maximal length of blocks of consecutive zeros just after the $n$th digit of $\varepsilon(x,\beta),$ i.e.,  \begin{equation*}\label{dfnln}l_{n,\beta}(x)=\sup\{k\geq0: \ep_{n+1}(x,\bt)=\cdots=\ep_{n+k}(x,\bt)=0 \}.\end{equation*}
For any $(x,y)\in [0,1]^2$, set
\begin{align*}
n_1= n_1(x,y) & =\inf\left\{n\geq 1: l_{n,\beta_1}(x)\cdot l_{n,\beta_2}(y)>0 \right\}, \\
m_1=m_1(x,y) & =n_1+\min \left\{l_{n,\beta_1}(x), l_{n,\beta_2}(y)\right\}.
\end{align*}

Assume that $n_k$ and $m_k$ have been defined for any $k\geq1$. Let
\begin{align*}
n_{k+1} &=n_{k+1}(x,y) =\inf\{n> m_k: l_{n,\beta_1}(x)\cdot l_{n,\beta_2}(y)>0 \}, \\
m_{k+1} &=m_{k+1}(x,y) =n_{k+1}+\min \{l_{n_{k+1},\beta_1}(x),l_{n_{k+1},\beta_2}(y)\}.
\end{align*}
Due to $v_{\beta_1,\beta_2}(x,y)>0$, $n_k$ is well defined.
Since $(T_{\beta_1}^n x, T_{\beta_2}^n y)\neq (0,0)$ for any $n\in\N$, $m_k$ is also well defined. Furthermore, $v_{\beta_1,\beta_2}(x,y)>0$ implies that $\limsup_{k\rightarrow \infty} (m_{k}-n_{k}) =\infty.$

Now we choose two subsequences $\{n_{i_k}\}$ and $\{m_{i_k}\}$ of $\{n_k\}$ and $\{m_k\}$ such that the sequence $\{m_{i_k}-n_{i_k}\}$ is non-decreasing. Let $i_1=1$. Suppose that $i_k$ has been defined. Let
\begin{equation*}
i_{k+1}=\min\{i> i_k : m_i-n_i\geq m_{i_k}-n_{i_k}\} .
\end{equation*}
For abbreviation, we continue to write
$\{n_k\}$ and $\{m_k\}$ for the subsequences $\{n_{i_k}\}$ and $\{m_{i_k}\}$ when there is no risk of confusion.

On the one hand, for any $\delta>0,$ there exists $k_0\geq1$ such that $m_k-n_k\geq (\gamma(x,y)-\delta)n_{k+1}$ for each $k\geq k_0$. Then, for any $N\geq n_{k_0}$, there exists $k\geq k_0$ such that $n_k\leq N <n_{k+1}$. Note that
\begin{equation*}
T^{n_k}_{\beta_1}x \leq \frac{1}{\beta_1^{l_{n_k,\beta_1}(x)}} \leq \frac{1}{\beta_1^{m_k-n_k}}\leq \frac{1}{\beta_1^{(\gamma(x,y)-\delta)n_{k+1}}}\leq\frac{1}{\beta_1^{(\gamma(x,y)-\delta)N}}.  \end{equation*}
Similarly, we obtain
\begin{equation*}
T^{n_k}_{\beta_2}y \leq \frac{1}{\beta_2^{l_{n_k,\beta_2}(y)}} \leq \frac{1}{\beta_2^{m_k-n_k}}\leq \frac{1}{\beta_2^{(\gamma(x,y)-\delta)N}}.
\end{equation*}
It follows that $\hat{v}_{\beta_1,\beta_2}(x,y)\geq\gamma (x,y)-\delta$ for any $\delta>0$.

On the other hand, for any $\delta>0$, there exists a subsequence $\{k_j\}$ such that
\begin{equation*}
m_{k_j}-n_{k_j}+1\leq (\gamma(x,y)+\delta)n_{{k_j}+1} .
\end{equation*}
From the definitions of the sequences $\{n_k\}_{k\geq1}$ and $\{m_k\}_{k\geq1}$, it follows that for any $1\leq n <n_{k+1}$,
\begin{equation*}
\min \left\{l_{n,\beta_1}(x),l_{n,\beta_2}(y)\right\}\leq m_k-n_k.
\end{equation*}
Thus, for any $T\geq1$, we can find $N:=n_{k_{j+1}}>T$ such that for all $1\leq n < N$, the following holds: if $l_{n,\beta_1}(x)=\min \{ l_{n,\beta_1}(x), l_{n,\beta_2}(y) \},$ then
\begin{equation*}
T_{\beta_1}^nx\geq \frac{1}{\beta_1^{ l_{n,\beta_1}(x) +1}}\geq \frac{1}{\beta_1^{ m_{k_j}-n_{k_j}+1}}\geq \frac{1}{\beta_1^{(\gamma(x,y)+\delta)n_{k_{j+1}}}} ,\end{equation*}
otherwise,
\begin{equation*}
 T_{\beta_2}^ny\geq \frac{1}{\beta_2^{ l_{n,\beta_2}(y)+1}}\geq \frac{1}{\beta_2^{ m_{k_j}-n_{k_j}+1}}\geq \frac{1}{\beta_2^{(\gamma(x,y)+\delta)n_{k_{j+1}}}}  .
\end{equation*}
Therefore, $\hat{v}_{\beta_1,\beta_2}(x,y)<\gamma(x,y)+\delta$ for any $\delta >0$. Hence, $\hat{v}_{\beta_1,\beta_2}(x,y)=\gamma(x,y)$.

The equality \eqref{jexpnt} follows from an analogous argument.
\end{proof}

Recall that
\begin{equation*}
E_{\beta_1,\beta_2}(\hat{v},v)=\{(x,y)\in[0,1]^2: \hat{v}_{\beta_1,\beta_2}(x,y)=\hat{v} ~\text{and}~ v_{\beta_1,\beta_2}(x,y)=v \}.
\end{equation*}
For any $(x,y)\in E_{\beta_1,\beta_2}(\hat{v},v)$ with $v>0$, if $(T_{\beta_1}^n x, T_{\beta_2}^n y)\neq (0,0)$ for all $n\in\N$, we associate $(x,y)$ with two sequences $\{n_k\}$, $\{m_k\}$ as in Lemma \ref{lemex}. It follows from the construction of the sequences $\{n_k\}$ and $\{m_k\}$ that
\begin{equation*}
m_k <n_{k+1} ~\text{for all}~ k\geq1 .
\end{equation*}
Thus
\begin{align}\label{tm1.6}
\hat{v}=\liminf\limits_{k\rightarrow \infty}\frac{m_k-n_k}{n_{k+1}}\leq  & \limsup\limits_{k\rightarrow \infty}\frac{m_k-n_k}{m_k} \\
= &1- \frac{1}{1+\limsup\limits_{k\rightarrow\infty}\frac{m_k-n_k}{n_k}}
= \frac{v }{1+v }, \nonumber
\end{align}
where $\frac{v}{1+v}=1~\text{if}~v=\infty$. Hence, if $0<\frac{v}{1+v}<\hat{v}\leq \infty$, then $E_{\beta_1,\beta_2}(\hat{v},v)$ is at most countable, and
\begin{equation*}
\hdim E_{\beta_1,\beta_2}(\hat{v},v)=0.
\end{equation*}

The task is now to construct a covering of $E_{\beta_1,\beta_2}(\hv,v) $ in the case $0\le \hat{v}\leq \frac{v}{1+v}<\infty$ and $0<v\leq \infty.$ Since $E_{\beta_1,\beta_2}(0,v)\subset \{(x,y)\in[0,1]^2: v_{\beta_1,\beta_2}(x,y)\geq v\}$, by Theorem \ref{wyf}, we get that
\begin{equation*} \hdim E_{\beta_1,\beta_2}(0,v) \le \left\{\begin{array}{ll}
\frac{2+v-v\log_{\beta_2}\beta_1}{1+v} &\text{if}~\beta_1^{1+v}<\beta_2,\\
\min\left\{\frac{1+\log_{\beta_2}\beta_1}{(1+v)\log_{\beta_2}\beta_1}, \frac{2+v-v\log_{\bt_2}\bt_1}{1+v}\right\} &\text{if}~\beta_1^{1+v}\geq\beta_2,
\end{array}
\right.\end{equation*}which is the desired upper bound.

It remains to deal with the case $0<\hat{v}\leq \frac{v}{1+v}<v<\infty.$ Since we focus on establishing the upper bound of $E_{\beta_1,\beta_2}(\hat{v},v),$ it suffices to construct a covering of
\begin{equation*}
\widetilde{E}_{\beta_1,\beta_2}(\hat{v},v):=E_{\beta_1,\beta_2}(\hat{v},v) \cap\left\{(x,y)\in[0,1]^2: ~(T_{\beta_1}^n x, T_{\beta_2}^n y)\neq (0,0)~\text{for all} ~n\in\N \right\} .\end{equation*}
Keep in mind that for all sequences $(\{n_k\},\{m_k\})$ associated with some $(x,y)\in \widetilde{E}_{\beta_1,\beta_2}(\hat{v},v),$ the following properties hold:

{\em{(i)}}~$x$ and $y$ have fixed blocks in the same positions, i.e.,
\begin{align}\label{Block}
&\varepsilon_{n_k+1}(x,\beta_1)=\cdots=\varepsilon_{m_k}(x,\beta_1)=0 ,~\text{and}\\
&\varepsilon_{n_k+1}(y,\beta_2)=\cdots=\varepsilon_{m_k}(y,\beta_2)=0 ,\nonumber
\end{align}
for all $k\geq1$.

{\em{(ii)}} For any $k\geq1$, we have
\begin{equation}\label{Ppstn}
n_k<m_k<n_{k+1} <m_{k+1}.
\end{equation}

{\em{(iii)}} The sequence $\{n_k\}$ increases exponentially. More specifically, there exists a constant $C>1$ independent of $(x,y)$ such that when $k$ is large enough,
\begin{equation*}
k\leq C \log n_k.
\end{equation*}
Indeed, it follows from $\hv>0$ that
$m_k-n_k\ge\frac{\hv}{2}n_{k+1}$ for all sufficiently large $k$, and thus for large enough $k$,
\begin{equation*}
\left(1-\frac{\hv}{2} \right)n_{k+1}\geq m_k-\frac{\hv}{2}n_{k+1}\geq n_k.
\end{equation*}

{\em{(iv)}} For any $\ep>0,$ we have that for large enough $k$,
\begin{equation}\label{keyeq1}
\sum_{i=1}^{k-1} (m_i-n_i)\geq \left(\frac{v\hv}{v-\hv }-\ep\right)n_k.
\end{equation}
In particular, we set $\frac{v\hv}{v-\hv}=\hv$ when $v=\infty.$

We conclude from \eqref{jexpnt} and \eqref{uexpnt} that for any $\delta>0$ there exists $k_0$ such that for any $k\geq k_0$,
\begin{equation*}
(\hat{v}-\delta)n_{k+1}\leq m_k-n_k\leq (v+\delta)n_k.
\end{equation*}
Thus, for any $\varepsilon>0$ there exist $\delta =\delta(\varepsilon)>0$ and $k_1>k_0$ such that for all $k>k_1$,
\begin{align*}
  \sum_{i=1}^{k-1}(m_i-n_i)&\geq \sum_{i=k_0}^{k-1}(\hat{v}-\delta)n_{i+1}\\
  & \ge \sum_{i=k_0}^{k-1}n_k(\hat{v}-\delta)\left(\frac{\hat{v}-\delta}{v+\delta}\right)^{k-i-1}\\
  & \ge \left(\frac{\hat{v} v}{v-\hat{v}}-\ep\right)n_k.
\end{align*}
Letting $\ep \to 0$, we have
\begin{equation*}
\liminf _{k \rightarrow \infty} \frac{\sum_{i=1}^{k-1} m_i-n_i}{n_k} \geq \frac{\hat{v} v}{v-\hat{v}}.
\end{equation*}

We now construct a covering of $\widetilde{E}_{\beta_1,\beta_2}(\hat{v},v)$ for $0<\hat{v}\leq \frac{v}{1+v}<\infty$ and $0<v\leq \infty$. We collect all sequences $\{n_k\}$ and $\{m_k\}$ associated with some $(x,y)\in \widetilde{E}_{\beta_1,\beta_2}(\hat{v},v)$ as in Lemma \ref{lemex} to form a set
\begin{equation*}
\Omega=\left\{(\{n_k\},\{m_k\}): \eqref{Ppstn}~\text{holds}, ~\limsup_{k\rightarrow\infty} \frac{m_k-n_k}{n_k}=v ~\text{and}~ \liminf_{k\rightarrow\infty} \frac{m_k-n_k}{n_{k+1}}=\hat{v} \right\} .
\end{equation*}
For $(\{n_k\},\{m_k\})\in \Omega,$ define
\begin{equation*}
F( \{n_k\},\{m_k\})=\{(x,y)\in [0,1]^2: \eqref{Block}~\text{holds} \},
\end{equation*}
\begin{equation*}
\Lambda_k,n_k(\ep)=\{(n_1,m_1;\cdots;n_{k-1},m_{k-1}): n_1<m_1<\cdots<m_{k-1}<n_k, ~\eqref{keyeq1}~\text{ holds}\},
\end{equation*}
and
\begin{align*}
  D_{n_1,m_1;\cdots;n_{k-1},m_{k-1}}=\{(\omega,\nu)\in \Sigma_{\bt_1}^{n_k}\times\ \Sigma_{\bt_2}^{n_k}:~& \omega_{n_i+1}=\cdots=\omega_{m_i}=0 , ~\text{and}~\\
 & \nu_{n_i+1}=\cdots=\nu_{m_i}=0 ,~\text{for all} ~1\leq i\leq k-1\}.
\end{align*}
For any $\omega\in \Sigma_{\bt}^n,$ let
\begin{equation*}
J_{n,\bt}^\ep(\omega) =\left\{x \in I_{n,\bt}(\omega): T_{\bt}^nx<\bt^{-n(v-\ep)}\right\} .
\end{equation*}
Thus, for any $\ep>0$,
\begin{align}\label{covering}
  &\widetilde{E}_{\bt_1,\bt_2}(\hv,v)\\  \nonumber
  \subset &\bigcup_{(\{n_k\},\{m_k\})\in \Omega} F( \{n_k\},\{m_k\})\\ \nonumber
  \subset& \bigcap_{K=1}^\infty\bigcup_{k=K}^\infty \bigcup_{n_k\geq  e^{\frac{k}{C}}}\bigcup_{(n_1,m_1,\cdots,n_{k-1},m_{k-1})\in \Lambda_{k,n_k}(\ep)} \bigcup_{(\omega,\nu)\in D_{n_1,m_1;\cdots;n_{k-1},m_{k-1}} } J_{n_k,\beta_1}^\ep(\omega)\times J_{n_k,\beta_2}^\ep(\nu).  \nonumber
\end{align}

It is straightforward to verify that
\begin{equation*}
\sharp \Lambda_{k,n_k}(\varepsilon)  \leq n_k^{2k} .
\end{equation*}
Applying \eqref{keyeq1}, $\bt_1\le\bt_2$, and Lemma \ref{number}, we deduce that if $(n_1,m_1,\cdots,m_{k-1})\in \Lambda_{k,n_k}(\ep),$ then
\begin{align*}
 \sharp D_{n_1,m_1;\cdots;n_{k-1},m_{k-1}} \leq & \left(\frac{\bt_1}{\bt_1-1}\right)^{k} \left(\frac{\bt_2}{\bt_2-1}\right)^{k} \bt_1^{ n_k- \sum_{i=1}^{k-1} (m_i-n_i)} \bt_2^{{ n_k- \sum_{i=1}^{k-1} (m_i-n_i)} } \\
 \leq  & (\bt_1-1)^{-2k}\bt_2^{k(1+\log_{\bt_2}\bt_1)}\bt_2^{n_k(1+\log_{\bt_2}\bt_1)\left(1-\frac{\hv v}{v-\hv}+\ep\right)}.
\end{align*}
Further, fixing $ \omega\in \Sigma_{\bt}^n,$ the equality \eqref{rep1} implies that for all $x,x'\in J_{n,\bt}^\ep(\omega),$
\begin{equation*}
|x-x'|\le \left|\frac{T_{\bt}^n x-T_{\bt}^n x'}{\bt^n} \right|\le \frac{2}{\bt^{n(1+v-\ep)}} ,
\end{equation*}
and thus
\begin{equation*}
|J_{n,\bt}^\ep(\omega)|\le \frac{2}{\bt^{n(1+v-\ep)}} .
\end{equation*}

According to \eqref{covering}, we observe that for any $K\ge1,$ the family
\begin{equation*}
\bigcup_{k=K}^\infty \bigcup_{n_k\geq  e^{\frac{k}{C}}}\bigcup_{\substack{\left(n_1, m_1; \cdots; n_{k-1},m_{k-1}\right) \in \Lambda_{k, n_k}(\varepsilon)}} \left\{ J^{\ep}_{n_k, \beta_1}(\omega) \times J^{\ep}_{n_k, \beta_2}(\nu): (\omega,\nu)\in D_{n_1,m_1;\cdots;n_{k-1},m_{k-1}}\right\}
\end{equation*}
forms a covering of $ \widetilde{E}_{\beta_1, \beta_2}(\hat{v}, v).$ We  proceed to cover  \begin{equation*}
\F_k:=\left\{ J_{n_k,\beta_1}^\ep(\omega)\times J_{n_k,\beta_2}^\ep(\nu): (\omega,\nu)\in D_{n_1,m_1;\cdots;n_{k-1},m_{k-1}}\right\}
\end{equation*}
and separate it into the following two cases.

{\em Case 1:}~If $\beta_1^{1+v}> \beta_2$, then choose $\ep>0$ such that $\bt_1^{1+v-\ep}\geq \bt_2.$ Thus, by \eqref{keyeq1}, in this case we get that
\begin{equation*}
\frac{1}{\bt_2^{( 1+v-\ep)n_k}}\le\frac{1}{\bt_1^{( 1+v-\ep)n_k}}\le \frac{1}{\bt_2^{n_k}}.
\end{equation*}
Hence, each rectangle $J_{n_k,\beta_1}^\ep(\omega)\times J_{n_k,\beta_2}^\ep(\nu)\in\F_k$ can be covered at most
\begin{equation*}
2+\frac{ 2\bt_1^{-n_k(1+v-\ep)} }{2\bt_2^{-n_k(1+v-\ep)}}\le 2\bt_2^{n_k(1+v-\ep)(1-\log_{\bt_2}\bt_1)}
\end{equation*}
many squares of side length $2\bt_2^{-n_k(1+v-\ep)}.$ Then, for any $s>(1+\log_{\beta_2}\beta_1)\left(\frac{v-\hat{v}-\hat{v} v}{(1+v-\ep)(v-\hat{v})}+2\varepsilon\right)+1-\log_{\beta_2}\beta_1$, we have
\begin{align}\label{e1}
&\mathcal{H}^s(\widetilde{E}_{\beta_1,\beta_2}(\hat{v},v))\\ \nonumber
\le &\liminf\limits_{K\rightarrow \infty} \sum_{k=K}^{\infty}\sum_{n_k=e^\frac{k}{C}}^\infty 2n_k^{2k} (\bt_1-1)^{-2k} \beta_2^{k(1+\log_{\beta_2}\beta_1)}\bt_2^{n_k(1+\log_{\bt_2}\bt_1)\left(1-\frac{\hv v}{v-\hv}+\ep\right)}\times \\ \nonumber
 &\qquad \qquad \qquad \qquad\bt_2^{n_k(1+v-\ep)(1-\log_{\bt_2}\bt_1)}\left( 2 \sqrt{2}\bt_2^{-(1+v-\ep)n_k} \right)^s \\ \nonumber
\ll& \liminf\limits_{K\rightarrow \infty}\sum_{k=K}^{\infty}\sum_{n_k=e^\frac{k}{C}}^\infty \beta_2^{n_k\left((1+\log_{\beta_2}\beta_1)\left(1-\frac{v\hv}{v-\hat{v}}+2\varepsilon\right)+(1+v-\ep)(1-\log_{\bt_2}\bt_1)-s(1+v-\ep)\right)}\\
\nonumber\le&\frac{1}{1-\bt_2^{t}}\sum_{k=1}^{\infty}(\beta_2^t)^{e^\frac{k}{C}}
<  \infty,
\end{align}where $t=\left((1+\log_{\beta_2}\beta_1)\left(1-\frac{v\hv}{v-\hat{v}}+2\varepsilon\right)+(1+v-\ep)(1-\log_{\bt_2}\bt_1)-s(1+v-\ep)\right)$. Here, $\mathcal{H}^s$ denotes the $s$-dimensional Hausdorff measure (see \cite{BP17}).
As a consequence, letting $\varepsilon\rightarrow 0$, we deduce that
\begin{align}\label{eq3009}
\hdim E_{\beta_1,\beta_2}(\hat{v},v)=&\hdim \widetilde{E}_{\beta_1,\beta_2}(\hat{v},v) \\
\leq & (1+\log_{\bt_2}\bt_1) \frac{v-\hv-\hv v}{(1+v)(v-\hv)} +1-\log_{\bt_2}\bt_1 .\nonumber
\end{align}

On the other hand, each rectangle $J_{n_k,\beta_1}^\ep(\omega)\times J_{n_k,\beta_2}^\ep(\nu)\in\F_k$ can be covered by a single square of side length $2\beta_1^{-n_k(1+v-\ep)}$. Thus, using the same method as in \eqref{e1}, for any $s>\frac{1+\log_{\bt_2}\bt_1}{\log_{\bt_2} \bt_1} \cdot \frac{v-\hv-\hv v+\ep(v-\hv)}{(1+v-\ep)(v-\hv)}$, we get
\begin{align*}
  \H^s(\widetilde{E}_{\bt_1,\bt_2}(\hv,v))&\leq \liminf\limits_{K\rightarrow \infty} \sum_{k=K}^{\infty}\sum_{n_k=e^\frac{k}{C}}^\infty n_k^{2k}(\bt_1-1)^{-2k} \bt_2^{k(1+\log_{\bt_2}\bt_1)} \\
  & \times \bt_2^{n_k(1+\log_{\bt_2}\bt_1)\left(1-\frac{\hv v}{v-\hv}+\ep\right)} \left(2\sqrt{2}\bt_2^{-n_k(1+v-\ep)\log_{\bt_2}\bt_1} \right)^s \\
  &<  \infty.
\end{align*}
Consequently,
\begin{align*}\label{eq3010}
\hdim E_{\beta_1,\beta_2}(\hat{v},v)=\hdim \widetilde{E}_{\beta_1,\beta_2}(\hat{v},v) \leq \frac{1+\log_{\beta_2}\beta_1}{\log_{\bt_2}\bt_1} \frac{v-\hat{v}-\hat{v} v}{(1+v)(v-\hat{v})}. \nonumber
\end{align*}
This combined with \eqref{eq3009} yields the desired upper bound for $\hdim E_{\beta_1,\beta_2}(\hat{v},v)$ in {\em Case 1}.

\smallskip
{\em Case 2:}~
If $\beta_1^{1+v}\leq \beta_2$, then for any $\ep>0$ we have $\frac{1}{\bt_2^{n_k}}<\frac{1}{\bt_1^{(1+v-\ep)n_k}}. $

Since $\bt_2^{-n_k(1+v-\ep)}\leq \bt_1^{-n_k(1+v-\ep)} ,$ as in {\em Case 1}, \eqref{eq3009} still holds. Moreover, each rectangle $J_{n_k,\beta_1}^\ep(\omega)\times J_{n_k,\beta_2}^\ep(\nu)\in \F_k$ can be covered by a single square of side length $2\bt_1^{-n_k(1+v-\ep)}.$

Combining this with $\bt_2^{-n_k} \leq \bt_1^{-n_k(1+v-\ep)},$ we conclude that each such square covers at least
\begin{equation*}
\frac{2\bt_1^{-n_k(1+v-\ep)}}{\bt_2^{-n_k} }-2\ge \frac{1}{2}\frac{2\bt_1^{-n_k(1+v-\ep)}}{\bt_2^{-n_k} } = \beta_2^{n_k(1-(1+v-\ep) \log_{\bt_2}\bt_1)}=:t_{n_k}(\omega)
\end{equation*}
many sets
\begin{equation*}
J_{n_k,\beta_1}^\ep(\omega)\times J_{n_k,\beta_2}^\ep(\nu_i)\subset J_{n_k,\beta_1}^\ep(\omega)\times I_{n_k,\beta_2}(\nu_i)~(i=1,\cdots,t_{n_k}(\omega)),
\end{equation*}
where $ I_{{n_k},\beta_2}(\nu_1),\cdots ,I_{{n_k},\beta_2}(\nu_{t_{n_k}(\omega)})$ are consecutive cylinders of order $n_k.$ Thus, the set
\begin{equation*}
\bigcup_{(\omega,\nu)\in D_{n_1,m_1;\cdots;n_{k-1},m_{k-1}} } J_{n_k,\beta_1}^\ep(\omega)\times J_{n_k,\beta_2}^\ep(\nu) \end{equation*}
can be covered by
\begin{align*}&\left(\frac{\bt_1}{\bt_1-1}\right)^{k} \left(\frac{\bt_2}{\bt_2-1}\right)^{k} \bt_1^{ n_k- \sum_{i=1}^{k-1} (m_i-n_i)} \bt_2^{{ n_k- \sum_{i=1}^{k-1} (m_i-n_i)} } \beta_2^{-n_k(1-(1+v-\ep) \log_{\bt_2}\bt_1)}\\
&\leq (\bt_1-1)^{-2k}  \bt_2^{k(1+\log_{\bt_2}\bt_1)}\bt_2^{n_k(1+\log_{\bt_2}\bt_1)\left(1-\frac{\hv v}{v-\hv}+\ep\right)}\beta_2^{-n_k(1-(1+v-\ep) \log_{\bt_2}\bt_1)}
\end{align*} many squares of side length $2\bt_1^{-{n_k}(1+v-\ep)}.$

Hence, for any $s>\frac{(3\ep+1)(v-\hv)-v\hv}{(1+v-\ep)(v-\hv) }+\frac{3\ep(v-\hv)-v\hv}{(1+v-\ep)(v-\hv)\log_{\bt_2}\bt_1}+1,$ we have
\begin{align*}
\H^s(\widetilde{E}_{\bt_1,\bt_2}(\hv,v))
&\leq \liminf\limits_{K\rightarrow \infty} \sum_{k=K}^{\infty}\sum_{n_k=e^\frac{k}{C}}^\infty n_k^{2k}\bt_2^{k(1+\log_{\bt_2}\bt_1)}\bt_2^{n_k(1+\log_{\bt_2}\bt_1)(1-\frac{\hv v}{v-\hv}+\ep)}\\
& \times  \beta_2^{-n_k(1-(1+v-\ep) \log_{\bt_2}\bt_1)} \left( 2 \sqrt{2}\bt_2^{-n_k(1+v-\ep)\log_{\bt_2}\bt_1} \right)^s\\
&< \infty.
\end{align*}
Therefore,
\begin{equation*}
\hdim E_{\beta_1,\beta_2}(\hat{v},v)=\hdim \widetilde{E}_{\beta_1,\beta_2}(\hat{v},v)\leq\frac{v-\hv-v\hv}{(1+v)(v-\hv)} -\frac{v\hv}{(1+v)(v-\hv)\log_{\bt_2}\bt_1}+1  .
\end{equation*}
Combining this result with \eqref{eq3009}, we obtain the required upper bound for $\hdim E_{\bt_1,\bt_2}(\hv,v)$ in {\em Case 2}.

\subsection{\bf lower bound}\

The method for determining the lower bound of $\hdim E_{\beta_1,\beta_2}(\hat{v},v)$ is classical. Firstly, we construct a Cantor
subset $F_\infty$ of the set $E_{\beta_1,\beta_2}(\hat{v},v)$; secondly, we define a suitable mass distribution $\mu$ supported on $F_\infty$; thirdly, we estimate the $\mu$-measure of a general ball; and finally, we apply the following mass distribution principle to complete the proof.
\begin{lem}[Mass distribution principle \cite{FA03}]\label{lem3003}
Let $\mu$ be a probability measure supported on a measurable set $\F$. Suppose there are positive constants $c$ and $r_0$ such that
\begin{equation*}
\mu(B(x,r))\le c r^s
\end{equation*}
for any ball $B(x,r)$ with radius $r\le r_0$ and center $x\in F$. Then $\mathcal{H}^s(F)\ge 1/c$ and thus $\hdim F\ge s$.
\end{lem}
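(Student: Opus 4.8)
The plan is to prove this classical mass distribution principle directly from the definition of Hausdorff measure, exploiting the single hypothesis that $\mu$ cannot concentrate too much mass on any small ball. The underlying idea is that every set of small diameter sits inside a small ball, so the hypothesis forces $\sum_i |U_i|^s$ to be bounded below along every economical cover $\{U_i\}$ of $F$, because the cover must capture all of the unit mass $\mu(F)=1$.

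Concretely, I would first record that $\mu(F)=1$: since $\mu$ is a probability measure whose support lies in the measurable set $F$, the $\mu$-measure of the complement of $F$ vanishes. Then I would fix $\delta$ with $0<\delta\le r_0$ and take an arbitrary countable cover $\{U_i\}_{i\ge 1}$ of $F$ with $|U_i|\le\delta$ for every $i$. Cover elements disjoint from $F$ may be discarded, as they neither contribute to $\mu\big(\bigcup_i U_i\big)$ nor increase $\sum_i|U_i|^s$; for each remaining $i$ choose $x_i\in U_i\cap F$. Every point of $U_i$ is within distance $|U_i|$ of $x_i$, so $U_i\subseteq B(x_i,|U_i|)$ with center $x_i\in F$ and radius $|U_i|\le\delta\le r_0$, whence the hypothesis yields $\mu(U_i)\le\mu\big(B(x_i,|U_i|)\big)\le c\,|U_i|^s$. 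Countable subadditivity of $\mu$ then gives
\begin{equation*}
1=\mu(F)\le\mu\Big(\bigcup_i U_i\Big)\le\sum_i\mu(U_i)\le c\sum_i|U_i|^s,
\end{equation*}
so $\sum_i|U_i|^s\ge 1/c$. Taking the infimum over all such $\delta$-covers shows $\H^s_\delta(F)\ge 1/c$, and since this holds for every $\delta\le r_0$, letting $\delta\to 0^+$ gives $\H^s(F)\ge 1/c$. In particular $\H^s(F)>0$, which by the definition of Hausdorff dimension forces $\hdim F\ge s$, completing the argument.

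Since the whole proof is a short inequality chain once the hypothesis is available, there is no genuine obstacle; the only point that warrants a word of care is that an arbitrary cover element $U_i$ need not be $\mu$-measurable. This is handled either by reading $\mu$ as the induced outer measure (still countably subadditive and monotone, which is all that is used), or by passing to the closed ball $\overline{B}(x_i,|U_i|)\subseteq B(x_i,2|U_i|)$ and invoking the hypothesis at radius $2|U_i|\le r_0$ once $\delta\le r_0/2$; the latter route merely inflates the constant by the harmless factor $2^s$ and leaves the dimension conclusion untouched. Accordingly I would either include this brief argument or simply cite \cite{FA03}.
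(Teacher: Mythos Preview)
Your proof is correct and is exactly the standard argument for the mass distribution principle. The paper itself gives no proof of this lemma; it simply cites it from Falconer \cite{FA03}, which is precisely the option you mention at the end of your proposal.
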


\subsubsection{\bf Cantor subset construction}\

According to Theorem \ref{wyf} and the fact that $E_{\bt_1,\bt_2}(\hv,\infty)\subset
\bigcap\limits_{v>0} W_{\bt_1,\bt_2}(v) $, we obtain that
\begin{equation*}
\hdim E_{\bt_1,\bt_2}(\hv,\infty)=0.
\end{equation*}
Since $E_{\beta_1,\beta_2}(0,0)$ is of full Lebesgue measure and $\hdim E_{\beta_1,\beta_2}(\hat{v},v)=0$ for $\hat{v} >\frac{v}{1+v}$, we only need to consider the case $0\leq\hat{v}\leq \frac{v}{1+v}<v<\infty$.

For such $v$ and $\hat{v},$ choose two sequences $\{n_k\}_{k\geq1}$ and $\{m_k\}_{k\geq1}$ satisfying the following conditions:
\begin{enumerate}
  \item $n_k<m_k<n_{k+1}$ and $\{m_k-n_k\}_{k\geq1}$ is non-decreasing.
  \item One has \begin{equation}\label{as1} \lim\limits_{k\rightarrow \infty} \frac{m_k-n_k}{n_k}=v~\text{and}~\lim\limits_{k\rightarrow \infty} \frac{m_k-n_k}{n_{k+1}}=\hat{v}.\end{equation}

\end{enumerate}
Indeed, when $\hat{v}>0$, we can take
\begin{equation*}
n_1=1,~ n_{k+1}=\left\lfloor \frac{v}{\hat{v}}n_k\right \rfloor+2,~ m_k=\lfloor (1+v)n_k \rfloor+1.
\end{equation*}
When $\hat{v}=0$, we can take
\begin{equation*}
n_k=2^{2^k},~ m_k=\lfloor (1+v)n_k \rfloor+1.
\end{equation*}

Recall that $\beta_2\geq \beta_1>1$ and for each $i=1,2$, $\beta_{i,N}$ is the unique real number satisfying the equation
\begin{equation*}
1=\frac{\varepsilon^*_1(1,\beta_i)}{z}+\cdots+\frac{\varepsilon^*_N(1,\beta_i)}{z^N},
\end{equation*}
where $\varepsilon^*_N(1,\beta_i) > 0.$

We will construct a tree-like Cantor set $F_\infty$ by addressing the $\beta$-expansion sequences of each point in $F_\infty$ step by step. To this end, we construct a sequence of families $\{\F_n\}_{n \geq 1}$ consisting of Cartesian products of cylinders, and the Cantor set is defined as
\begin{equation*}
F_\infty = \bigcap_{n=1}^\infty \bigcup_{Q_n \in \F_n}  Q_n.
\end{equation*}

\textit{The first level of the Cantor set.}

\textbullet Step $1_1$. Construct rectangles.

Define $l_1=n_1+2N+1$ and $h_1=l_1+m_1-n_1+2N+1.$ Let
\begin{equation*}
B_1(\beta_1)=\{\omega_1=\xi_1 0^N10^N\varepsilon_1 0^N10^N  \in\Sigma_{\beta_{1,N}}^{h_1}: \varepsilon_1=\underbrace{0\cdots 0}_{m_1-n_1 },\xi_1 \in\Sigma_{\beta_{1,N}}^{n_1}\},
\end{equation*}
and
\begin{equation*}
B_1(\beta_2)=\{\nu_1=\zeta_1 0^N10^N\varepsilon_1 0^N10^N  \in\Sigma_{\beta_{2,N}}^{h_1}: \varepsilon_1=\underbrace{0\cdots 0}_{m_1-n_1 },\zeta_1 \in\Sigma_{\beta_{2,N}}^{n_1}\}.
\end{equation*}
Here, the block $0^N$ is inserted to guarantee that the concatenation $\omega0^N\ep$ is still $\beta_i$-admissible for $i=1,2$ for any $\omega\in \Sigma_{\beta_{i,N}}^{n}$ and $\varepsilon\in \Sigma_{\beta_{i,N}}^{m}$~(see Lemma \ref{lem1}). The digit $1$ is inserted to control the run length of consecutive zeros.

For each $(\omega_1,\nu_1) \in B_1(\beta_1) \times B_1(\beta_2)$, the rectangle
\begin{equation*}
R_1=R_1(\omega_1,\nu_1):=I_{h_1,\beta_1} (\omega_1) \times I_{h_1,\beta_2}(\nu_1)
\end{equation*}
is called a basic rectangle of order $1.$ Then, define $\mR_1(Q_0) $ as the family of basic rectangles generated by $Q_0=[0,1]^2,$ that is,
\begin{equation*}
\mR_1(Q_0)=\left\{R_1(\omega_1,\nu_1):  (\omega_1,\nu_1) \in B_1(\beta_1) \times B_1(\beta_2)\right\} .
\end{equation*}

\medskip

\textbullet Step $2_1$. Dividing.

Let \begin{equation}\label{choose1}\tilde{h}_1= \lfloor h_1 \log_{\bt_1} \bt_2\rfloor+2N+1\end{equation} and choose $\tilde{t}_1\ge0$ and $0\leq \tilde{r}_1 < m_1-n_1$ such that
\begin{equation*}
\tilde{h}_1=h_1+\tilde{t}_1(m_1-n_1+2N+1)+\tilde{r}_1 +2N+1.
\end{equation*}
Then, for any $\omega_1\in B_{1}(\bt_1),$ set
\begin{align*}
D_1(\beta_1,\omega_1)=\{{\tilde{\omega}_1}=&\omega_1 (\xi_1' 0^N10^N) \cdots (\xi_{\tilde{t}_1}' 0^N10^N) (\tau_{1}' 0^N10^N)\in\Sigma_{\beta_{1,N}}^{\tilde{h}_1}:\\ &\omega_1 \in B_1(\beta_1),~\tau_{1}'\in \Sigma_{\beta_{1,N}}^{\tilde{r}_1 },~\xi_j' \in\Sigma_{\beta_{1,N}}^{m_1-n_1} ~\text{for}~ 1\leq j \leq \tilde{t}_1 \}.
\end{align*}

For each $(\tilde{\omega}_1,\nu_1) \in D_1(\beta_1,\omega_1) \times B_1(\beta_2)$, it follows from \eqref{choose1} that
\begin{equation*} |I_{\tilde{h}_1,\beta_1} (\tilde{\omega}_1)|=\frac{1}{\bt_1^{\tilde{h}_1}} \asymp\frac{1}{\bt_2^{h_1}} =|I_{h_1,\beta_2}(\nu_1)|.\end{equation*}
The implied constant in the $\asymp$ depends only on $N,$ $\beta_1$ and $\beta_2.$ Then, we write
\begin{equation*}
Q_1=Q_1(\tilde{\omega}_1,\nu_1):=I_{\tilde{h}_1,\beta_1} (\tilde{\omega}_1) \times I_{h_1,\beta_2}(\nu_1)
\end{equation*}
and call the square $Q_1$ a basic square of order $1.$
Denote by $\mQ_1(R_1)$ the family of the basic squares $Q_1\subset R_1$, i.e.,
\begin{equation*}
Q_1(R_1)=\left\{Q_1(\tilde{\omega}_1,\nu_1):  (\tilde{\omega}_1,\nu_1) \in D_1(\beta_1,\omega_1) \times B_1(\beta_2)\right\} .
\end{equation*}

With these notations, the first sub-level $\F_1$ is defined as
\begin{align*}
  \F_1=&\{Q_1\in\mQ_1(R_1):R_1\in \mR_1([0,1]^2)\}  \\
  =& \{Q_1(\tilde{\omega}_1,\nu_1) : (\omega_1,\nu_1)\in B_1(\beta_1)\times B_1(\beta_2)~\text{and}~\tilde{\omega}_1\in D_1(\beta_1,\omega_1)\}.
\end{align*}

\textit{From Level $k-1$ to Level $k.$}

Suppose that $\F_{k-1}$ has been defined. For each basic square
$Q_{k-1}\in\F_{k-1},$ we will define a sub-family $\F_k(Q_{k-1}),$ and then let
\begin{equation*}
\F_k = \left\{Q_k \in \F_k(Q_{k-1}) : Q_{k-1}\in \F_{k-1}\right\}.
\end{equation*}
Let $Q_{k-1}$ be an arbitrary square in $\mathcal{F}_{k-1},$ written as
\begin{equation*}
Q_{k-1}:=I_{\tilde{h}_{k-1},\bt_1}(\tilde{\omega}_{k-1})\times  I_{h_{k-1},\bt_2}(\nu_{k-1}).
\end{equation*}

\textbullet Step $1_{k}$. Construct rectangles.

Take integers $t_{k-1}\ge 0$ and $0\leq r_{k-1} < m_{k-1}-n_{k-1}$ such that
\begin{equation*}
n_k=m_{k-1} +t_{k-1}(m_{k-1}-n_{k-1}) +r_{k-1}.
\end{equation*}
Then let
\begin{align*}
l_{k}=&h_{k-1}+t_{k-1}(m_{k-1}-n_{k-1}+2N+1)+r_{k-1}+2N+1\\
=&n_{k} +2N+1+\sum_{i=1}^{k-1} (t_i+2)(2N+1)
\end{align*}and
\begin{align*}
h_{k}=l_{k}+m_{k}-n_{k}+2N+1=m_{k} +2(2N+1)+\sum_{i=1}^{k-1} (t_i+2)(2N+1).
\end{align*}
Note that $\log _{\beta_2}{\beta_1}>\frac{\hat{v}}{v}(1+v)$ and the equalities in \eqref{as1} hold. By passing to a subsequence if necessary, we may assume that $\{n_k\}_{k\geq1}$ and $\{m_k\}_{k\geq1}$ are sufficiently sparse that
\begin{equation*}
\frac{1}{\bt_1^{l_k}} \le \frac{1}{\bt_1^{\tilde{h}_{k-1}+2N+1}}~\text{for all }~k\ge 2.
\end{equation*}
Thus, there exist $\bar{t}_{k-1}\ge 0$ and $0\leq \bar{r}_{k-1} < m_{k-1}-n_{k-1}$ such that
\begin{equation}\label{as2} l_k=\tilde{h}_{k-1}+\bar{t}_{k-1}(m_{k-1}-n_{k-1}+2N+1) +\bar{r}_{k-1}+2N+1.\end{equation}

Then, set
\begin{align*}
&B_{k}(\beta_1,\tilde{\omega}_{k-1})\\
=&\Big\{\omega_k=\tilde{\omega}_{k-1}(\xi_1 0^N10^N) \cdots (\xi_{\bar{t}_{k-1}}0^N10^N) (\tau_{k-1}0^N10^N) (\varepsilon_{k}0^N10^N) \in \Sigma_{\beta_{1,N}}^{h_{k}}: \\
&\quad \tilde{\omega}_{k-1}\in D_{k-1}(\beta_1,\omega_{k-1} ),~\varepsilon_{k}=\underbrace{0\cdots 0}_{m_{k}-n_{k} },~
 \tau_{k-1}\in \Sigma_{\beta_{1,N}}^{\bar{r}_{k-1}},~\xi_j\in \Sigma_{\beta_{1,N}}^{m_{k-1}-n_{k-1}} ~\text{for}~ 1\leq j \leq \bar{t}_{k-1}\Big\},
\end{align*}
and
\begin{align*}
&B_k(\beta_2,\nu_{k-1})\\
=&\big\{\nu_k=\nu_{k-1}(\zeta_1 0^N10^N) \cdots (\zeta_{t_{k-1}}0^N10^N) (\gamma_{k-1}0^N10^N) (\varepsilon_{k}0^N10^N) \in \Sigma_{\beta_{2,N}}^{h_k}:\\&\quad \nu_{k-1} \in B_{k-1}(\beta_2,\nu_{k-2}),~\varepsilon_k=\underbrace{0\cdots 0}_{m_k-n_k},~\gamma_{k-1}\in \Sigma_{\beta_{2,N}}^{r_{k-1}},~\zeta_j \in\Sigma_{\beta_{2,N}}^{m_{k-1}-n_{k-1}} ~\text{for}~ 1\leq j \leq t_{k-1} \big\}.
\end{align*}

For each $(\omega_k,\nu_k) \in B_{k}(\beta_1,\tilde{\omega}_{k-1}) \times B_k(\beta_2,\nu_{k-1})$,
the rectangle
\begin{equation*}
R_k=R_k(\omega_k,\nu_k):=I_{h_k,\beta_1} (\omega_k) \times I_{h_k,\beta_2}(\nu_k)
\end{equation*}
is called a basic rectangle of order $k.$ Denote by $\mR_k(Q_{k-1}) $ the family of basic rectangles generated by $Q_{k-1},$ i.e.,
\begin{equation*}
\mR_k(Q_{k-1})=\left\{R_k(\omega_k,\nu_k):  (\omega_k,\nu_k) \in B_k(\beta_1,\tilde{\omega}_{k-1}) \times B_k(\beta_2,\nu_{k-1})\right\} .
\end{equation*}

\textbullet Step $2_{k}$. Dividing.

Define
\begin{equation}\label{as3}
\tilde{h}_k = \left\lfloor h_k \log_{\bt_1} \bt_2 \right\rfloor+2N+1,
\end{equation}
and select $\tilde{t}_k \ge 0$ and $0 \leq \tilde{r}_k < m_k - n_k$ such that
\begin{equation*}
\tilde{h}_k = h_k + \tilde{t}_k(m_k - n_k + 2N + 1) + \tilde{r}_k + 2N + 1.
\end{equation*}
Fix $\omega_k \in B_{k}(\beta_1,\tilde{\omega}_{k-1})$, and set
\begin{align*}
D_k(\beta_1,\omega_k)=\{{\tilde{\omega}_k}=&\omega_k  (\xi_k' 0^N10^N) \cdots (\xi_{\tilde{t}_k}' 0^N10^N) (\tau_{k}' 0^N10^N)\in\Sigma_{\beta_{1,N}}^{\tilde{h}_k}:\\
&\omega_k \in B_{k}(\beta_1,\tilde{\omega}_{k-1}),~\tau_{k}'\in \Sigma_{\beta_{1,N}}^{\tilde{r}_k},~\xi_j' \in\Sigma_{\beta_{1,N}}^{m_k-n_k} ~\text{for}~ 1\leq j \leq \tilde{t}_k \}.
\end{align*}

For each $(\tilde{\omega}_k,\nu_k) \in D_k(\beta_1,\omega_k) \times B_k(\beta_2,\nu_{k-1})$, we define
\begin{equation*}
Q_k=Q_k(\tilde{\omega}_k,\nu_k):=I_{\tilde{h}_k,\beta_1} (\tilde{\omega}_k) \times I_{h_k,\beta_2}(\nu_k)
\end{equation*}
and we call the square $Q_k(\tilde{\omega}_k,\nu_k)$ a basic square. Note that
$ Q_k(\tilde{\omega}_k,\nu_k)\subset R_k(\omega_k,\nu_k).$  Then, for each $R_{k}=R_k(\omega_k,\nu_k)\in\mR_k(Q_{k-1}),$ denote the family of  basic squares  $Q_k \subset R_{k}$ by $\mQ_k(R_{k}) $, i.e.,
\begin{equation*}
\mQ_k(R_{k})=\left\{Q_k(\tilde{\omega}_k,\nu_k): (\tilde{\omega}_k,\nu_k) \in D_k(\beta_1,\omega_{k})\times B_k(\beta_2,\nu_{k-1}) \right\} .
\end{equation*}

Then the $k$th level, $\F_k$, is defined as
\begin{equation*}
\F_{k}=\left\{\F_{k}(Q_{k-1}): Q_{k-1}\in \F_{k-1}\right\},
\end{equation*}
where, for a fixed $Q_{k-1}=Q_{k-1}(\tilde{w}_{k-1},v_{k-1})\in \F_{k-1},$
\begin{align*}\F_{k}(Q_{k-1})=&\left\{Q_k: Q_k\in \mQ_k(R_k)~\text{and}~R_k \in \mR_k(Q_{k-1})\right\} \\
=&\left\{Q_k(\tilde{\omega}_k,\nu_k): (\omega_k, \nu_k) \in B_{k}\left(\beta_1,\tilde{w}_{k-1}\right) \times B_{k}\left(\beta_2,v_{k-1}\right)~\text{and}~ \tilde{\omega}_k \in D_{k}\left(\beta_1,w_{k}\right)\right\}.\end{align*}

Finally, the desired Cantor set is defined as
\begin{equation*}
F_\infty:=\bigcap_{k\geq1}\bigcup_{Q_k\in \F_k}Q_{k}.
\end{equation*}

\begin{rem}Under the assumption $\log_{\beta_2}{\beta_1} > \frac{\hat{v}}{v}(1 + v),$ we have \eqref{as2} holds. Consequently, the Cantor subset $F_\infty$ is well defined. However, without the condition $\log_{\beta_2}{\beta_1} > \frac{\hat{v}}{v}(1 + v),$ the lower bound estimate of $\hdim E_{\beta_1,\beta_2}(\hat{v},v)$ is not known, and the methods developed in the present paper are likely to be ineffective.
\end{rem}

Subsequently, we summarise the properties of the Cantor set $F_\infty$ for later use.

\begin{lem}\label{lengap} Let $k\ge 1$ and $Q_{k-1}\in \F_{k-1}.$
  \begin{enumerate}

    \item\label{len0} For any $R_k\in \mR_k(Q_{k-1})$ and $Q_k \in \mQ_k(R_k),$ they are of the form
        \begin{equation*} R_k=I_{h_k,\beta_1} (\omega_k) \times I_{h_k,\beta_2}(\nu_k)~\text{and}~ Q_k=I_{\tilde{h}_k,\beta_1} (\tilde{\omega}_k) \times I_{h_k,\beta_2}(\nu_k),\end{equation*}
    with lengths satisfying
\begin{equation*}\frac{1}{\beta_1^{\tilde{h}_k}}=|I_{\tilde{h}_k,\beta_1}(\tilde{\omega}_k)|\asymp|I_{h_k,\beta_2} (\nu_k) |=\frac{1}{\bt_2^{h_k}}< \frac{1}{\bt_1^{h_k}}
 =|I_{h_k,\beta_1} (\omega_k)|  .\end{equation*}

  \item\label{gap0} The rectangles in $\mR_k(Q_{k-1})$ are $\frac{1}{\bt_i^{l_k}}$-separated in the $i$th direction; the basic squares in $\mQ_k(R_k)$  are $\frac{1}{\beta_1^{\tilde{h}_k}}$-separated in the first direction.

    \item  One has \begin{equation*}\sharp \mR_k(Q_{k-1})=(\sharp \Sigma_{\beta_{1,N}}^{m_{k-1}-n_{k-1}})^{\bar{t}_{k-1}} \cdot \Sigma_{\beta_{1,N}}^{\bar{r}_{k-1}} \cdot (\sharp \Sigma_{\beta_{2,N}}^{m_{k-1}-n_{k-1}})^{t_{k-1}} \cdot \Sigma_{\beta_{2,N}}^{r_{k-1}} \end{equation*} and for any $R_k \in \mR_k(Q_{k-1}),$
\begin{equation*}\sharp \mQ_k(R_{k})=(\Sigma_{\beta_{1,N}}^{m_{k-1}-n_{k-1}})^{\tilde{t}_{k}} \cdot \Sigma_{\beta_{1,N}}^{\tilde{r}_{k}}.
\end{equation*}
  \end{enumerate}
\end{lem}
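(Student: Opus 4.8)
The plan is to read parts (1)--(3) off the construction directly, the only substantial work being the two separation bounds in (2).

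\textit{Part (1).} The crucial observation is that every word produced by the construction --- each $\omega_k\in B_k(\beta_1,\tilde\omega_{k-1})$, each $\nu_k\in B_k(\beta_2,\nu_{k-1})$, and each $\tilde\omega_k\in D_k(\beta_1,\omega_k)$ --- is full for the relevant $\beta_i$. I would prove this by induction on $k$: every constituent block has the form $\mu 0^N$ with $\mu$ a $\beta_{i,N}$-admissible word ($\mu$ a free block $\xi_j,\tau_{k-1},\dots$, the digit $1$, or the zero block $\varepsilon_k$), hence is full by Lemma \ref{lem1}; a concatenation of full words is admissible and has multiplicative length by Lemma \ref{fullcy}, so is again full. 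Combined with the identities $|\omega_k|=|\nu_k|=h_k$ and $|\tilde\omega_k|=\tilde h_k$ (which follow from \eqref{as2} and the definitions of $l_k,h_k,\tilde h_k$), this gives $|I_{h_k,\beta_1}(\omega_k)|=\beta_1^{-h_k}$, $|I_{h_k,\beta_2}(\nu_k)|=\beta_2^{-h_k}$, and $|I_{\tilde h_k,\beta_1}(\tilde\omega_k)|=\beta_1^{-\tilde h_k}$. Finally $\tilde h_k=\lfloor h_k\log_{\beta_1}\beta_2\rfloor+2N+1$ yields $\beta_1^{\tilde h_k}\asymp\beta_2^{h_k}$ with an implied constant depending only on $N,\beta_1,\beta_2$, while $\beta_2\ge\beta_1$ gives $\beta_2^{-h_k}\le\beta_1^{-h_k}$; this is the displayed chain.

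\textit{Part (2).} Let $R_k,R_k'\in\mR_k(Q_{k-1})$ be distinct and differing in the $i$th direction, with $i$th words $u\ne u'$. By construction $u,u'$ share their length-$\tilde h_{k-1}$ (resp.\ $h_{k-1}$) prefix, agree at every index beyond $l_k$ since the tail $\varepsilon_k0^N10^N$ is fixed, and in fact can differ only at free-digit positions, all of which have index $\le l_k-(2N+1)$. Let $p\le l_k-(2N+1)$ be the \emph{last} index at which $u,u'$ differ, say with digits $a<a'$ there. For $z\in I_{h_k,\beta_i}(u)$ and $z'\in I_{h_k,\beta_i}(u')$, the images $T_{\beta_i}^p z,T_{\beta_i}^p z'$ lie in the same order-$(h_k-p)$ cylinder (the positions $[p+1,h_k]$ agree), hence differ by at most $\beta_i^{-(h_k-p)}$, and expanding $z,z'$ about their common prefix gives
\begin{equation*}
z'-z=\beta_i^{-p}\Bigl((a'-a)+\bigl(T_{\beta_i}^p z'-T_{\beta_i}^p z\bigr)\Bigr)\ \ge\ \beta_i^{-p}\bigl(1-\beta_i^{-(h_k-p)}\bigr).
\end{equation*}
Since $p\le l_k-(2N+1)$ and $h_k-p\ge 2N+1$, the right-hand side is at least $\beta_i^{-l_k}(\beta_i^{2N+1}-1)\ge\beta_i^{-l_k}$ once $N$ is large (which is permissible, $N$ being a free parameter sent to infinity at the end), so $d_i(R_k,R_k')\ge\beta_i^{-l_k}$. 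The separation of the squares in $\mQ_k(R_k)$ in the first direction is the same computation applied to distinct $\tilde\omega_k,\tilde\omega_k'\in D_k(\beta_1,\omega_k)$: they share the prefix $\omega_k$, agree on their last $2N+1$ positions, and differ only at free positions of index $\le\tilde h_k-(2N+1)$, yielding a gap $\ge\beta_1^{-\tilde h_k}$.

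\textit{Part (3).} This is a direct count: with $Q_{k-1}$ (equivalently $\tilde\omega_{k-1},\nu_{k-1}$) fixed, a word of $B_k(\beta_1,\tilde\omega_{k-1})$ is determined by the free blocks $\xi_1,\dots,\xi_{\bar t_{k-1}}\in\Sigma_{\beta_{1,N}}^{m_{k-1}-n_{k-1}}$ and $\tau_{k-1}\in\Sigma_{\beta_{1,N}}^{\bar r_{k-1}}$, with everything else fixed, and similarly for $B_k(\beta_2,\nu_{k-1})$ with $t_{k-1},r_{k-1}$; multiplying the counts gives $\sharp\mR_k(Q_{k-1})$, and the identical bookkeeping with $\tilde t_k,\tilde r_k$ gives $\sharp\mQ_k(R_k)$ (admissibility, hence nonemptiness of all cylinders, being guaranteed by the $0^N$ buffers via Lemma \ref{lem1}). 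The main obstacle throughout is the separation estimate in part (2): getting a clean lower bound $\beta_i^{-l_k}$ (rather than a mere positive gap) forces one to use both the fixed $0^N10^N$ buffers, which keep the last differing digit strictly inside the length-$l_k$ prefix, and the freedom to choose $N$ large.
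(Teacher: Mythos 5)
Parts (1) and (3) of your proposal are fine and are exactly what the paper leaves implicit: the paper's own proof of Lemma \ref{lengap} is a one-line appeal to \eqref{as3}, Lemma \ref{lem1} and the construction, and your fullness-by-induction argument (each block is of the form $\mu 0^N$ with $\mu$ a $\beta_{i,N}$-admissible word, so it is full by Lemma \ref{lem1}, and concatenations of full words are full with multiplicative lengths by Lemma \ref{fullcy}) together with the direct count of free blocks is the intended justification. The problem is in part (2), which you rightly identify as the only substantive point.

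The displayed identity $z'-z=\beta_i^{-p}\bigl((a'-a)+(T_{\beta_i}^p z'-T_{\beta_i}^p z)\bigr)$ is only valid when the digits of $u$ and $u'$ agree at every position $1,\dots,p-1$; with $p$ chosen as the \emph{last} differing index this is not guaranteed, since two words of the same family typically differ in several free positions, and then the correct expansion is $z'-z=\sum_{j\le p}(u'_j-u_j)\beta_i^{-j}+\beta_i^{-p}(T^p_{\beta_i}z'-T^p_{\beta_i}z)$. Moreover, at the last differing index the inequality $a<a'$ does not even determine the order of the two cylinders, so the sign of $z'-z$ is not controlled. In an integer base the step could be rescued because $\sum_{j\le p}(u'_j-u_j)\beta^{-j}$ is then a nonzero integer multiple of $\beta^{-p}$, but for general $\beta>1$ there is no such lower bound --- this is precisely why $\beta$-cylinders are not all of length $\beta^{-n}$ --- so the bound $z'-z\ge\beta_i^{-p}(1-\beta_i^{-(h_k-p)})$ is unjustified as written. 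The conclusion is still true and can be repaired with the tools you already invoke: all differences occur at indices at most $l_k-(2N+1)$ (resp.\ $\tilde h_k-(2N+1)$), and the prefixes of length $l_k-N-1$ (resp.\ $\tilde h_k-N-1$), which end with a free block followed by $0^N$, are full; hence the corresponding cylinders are disjoint intervals of exact length $\beta_i^{-(l_k-N-1)}$, so their left endpoints differ by at least that amount, and since the remaining suffix ($10^N\varepsilon_k0^N10^N$, resp.\ $10^N$) is the same for both words, the order-$h_k$ (resp.\ order-$\tilde h_k$) cylinders sit at the same offset inside them. The gap is therefore at least $\beta_i^{-(l_k-N-1)}-\beta_i^{-h_k}\ge\beta_i^{-l_k}$ once $\beta_i^{N+1}\ge 2$ (and likewise at least $\beta_1^{-\tilde h_k}$ for the squares), which recovers the stated separation under the same ``$N$ large'' convention you already allow.
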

\begin{proof}
These results follow directly from \eqref{as3}, Lemma \ref{lem1} and the construction of Cantor set $F_\infty.$
\end{proof}

\begin{pro}
One has
$
F_{\infty} \subset E_{\beta_1, \beta_2}(\hat{v}, v).
$
\end{pro}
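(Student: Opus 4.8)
The plan is to verify the membership $F_\infty \subset E_{\beta_1,\beta_2}(\hat v, v)$ by taking an arbitrary point $(x,y) \in F_\infty$, reading off its $\beta_1$- and $\beta_2$-expansions from the Cantor construction, and directly computing the two exponents $v_{\beta_1,\beta_2}(x,y)$ and $\hat v_{\beta_1,\beta_2}(x,y)$ from the positions of the prescribed zero blocks. Since $(x,y)$ lies in exactly one basic square $Q_k$ at each level, the digit sequence $\varepsilon(x,\beta_1)$ is the nested limit of the words $\tilde\omega_k$ and $\varepsilon(y,\beta_2)$ is the nested limit of the words $\nu_k$. By construction, both $x$ and $y$ carry a block of $m_k - n_k$ consecutive zeros starting just after position $n_k$ (modulo the bounded shift $\sum_i (t_i+2)(2N+1)$ coming from the inserted $0^N 1 0^N$ spacers), and between these prescribed blocks the digits are governed by admissible words from $\Sigma_{\beta_{i,N}}$ with no long runs of zeros, because of the inserted separating digit $1$.

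The first step is to make the bookkeeping precise: set $\tilde n_k := n_k + \sum_{i<k}(t_i+2)(2N+1)$ (or the appropriate analogue for each coordinate, using $l_k, h_k, \tilde h_k$ as defined in the construction) and check that the spacer overhead $\sum_{i=1}^{k-1}(t_i+2)(2N+1)$ is $o(n_k)$, which follows from condition \eqref{as1} and the fact that $t_i \leq (m_i - n_i)$-many units fit, so the number of spacers grows linearly in $k$ while $n_k$ grows at least geometrically (when $\hat v > 0$) or super-exponentially (when $\hat v = 0$). This is where the chosen explicit sequences $n_{k+1} = \lfloor \frac{v}{\hat v} n_k\rfloor + 2$, $m_k = \lfloor (1+v)n_k\rfloor + 1$ (resp. $n_k = 2^{2^k}$) do the work: they guarantee $\lim_k \frac{m_k - n_k}{n_k} = v$ and $\lim_k \frac{m_k - n_k}{n_{k+1}} = \hat v$ survive the passage from the idealized positions to the actual shifted positions.

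The second step is the computation of $v_{\beta_1,\beta_2}(x,y)$. Using the block of $m_k - n_k$ zeros, one gets $T_{\beta_1}^{\tilde n_k} x < \beta_1^{-(m_k - n_k)}$ and similarly for $y$, so taking $n = \tilde n_k$ shows $v_{\beta_1,\beta_2}(x,y) \geq \limsup_k \frac{m_k - n_k}{\tilde n_k} = v$; the reverse inequality uses Lemma \ref{lemex} together with the fact that, between the prescribed blocks, the inserted digit $1$ caps the length of any zero run, so along any other subsequence the quantity $\min\{l_{n,\beta_1}(x), l_{n,\beta_2}(y)\}$ cannot exceed (a bounded additive constant plus) some $m_k - n_k$, giving $\limsup \leq v$. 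The third step is entirely analogous for $\hat v$: the uniform exponent is $\gamma(x,y) = \liminf_k \frac{m_k - n_k}{\tilde n_{k+1}}$ by the same analysis as in Lemma \ref{lemex}, and this equals $\hat v$ by \eqref{as1} after absorbing the $o(n_k)$ spacer corrections. One must also dispose of the degenerate possibility $(T_{\beta_1}^n x, T_{\beta_2}^n y) = (0,0)$, but the inserted $1$'s guarantee infinitely many nonzero digits in each coordinate, so this never occurs and Lemma \ref{lemex} applies.

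I expect the main obstacle to be purely the \emph{error-term control}: showing that replacing the clean indices $n_k, m_k$ by the true digit positions (which are offset by the cumulative spacer length $\sum_{i<k}(t_i+2)(2N+1)$ and by the $\lfloor\cdot\rfloor$ roundings in $\tilde h_k = \lfloor h_k \log_{\beta_1}\beta_2\rfloor + 2N+1$) does not perturb either the $\limsup$ or the $\liminf$. Concretely one needs $\tilde n_{k+1}/\tilde n_k$ to stay bounded (so that a point's orbit at a general time $N \in [\tilde n_k, \tilde n_{k+1})$ is still controlled by the block at $\tilde n_k$), and one needs $k = o(n_k)$ so the spacers are negligible — both of which hold for the explicit sequences chosen, using $\log_{\beta_2}\beta_1 > \frac{\hat v}{v}(1+v)$ only indirectly (that hypothesis was already used to guarantee \eqref{as2}, i.e.\ that the construction makes sense). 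The rest is routine estimation with geometric series.
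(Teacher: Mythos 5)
Your overall strategy coincides with the paper's: read off the expansions of a point of $F_\infty$, locate the prescribed zero blocks at the shifted positions $l_k$, show the spacer overhead is negligible, and rerun the argument of Lemma \ref{lemex} to compute both exponents (the paper writes $\hat{v}_{\beta_1,\beta_2}(x,y)=\lim_k\frac{h_k-l_k-1}{l_{k+1}-N}$ and $v_{\beta_1,\beta_2}(x,y)=\lim_k\frac{h_k-l_k-1}{l_k-N}$ and then disposes of the overhead). The gap is exactly at the point you yourself flag as the main obstacle, the error-term control, and it concerns the case $\hat{v}=0$. First, the number of inserted spacers up to level $k$ is $\sum_{i<k}(t_i+2)$ with $t_i$ of order $\frac{n_{i+1}-m_i}{m_i-n_i}$; this is bounded for the explicit sequences when $\hat{v}>0$, but for $\hat{v}=0$ (where $n_k=2^{2^k}$, $n_{i+1}=n_i^2$) one has $t_i\sim\frac{n_{i+1}}{v\,n_i}\to\infty$, so your claim that ``the number of spacers grows linearly in $k$'' is false, and $k=o(n_k)$ alone does not make the overhead $\sum_{i<k}(t_i+2)(2N+1)$ negligible. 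Second, you assert that one needs $\tilde{n}_{k+1}/\tilde{n}_k$ to stay bounded and that this holds for the chosen sequences; for $\hat{v}=0$ it manifestly fails ($n_{k+1}/n_k=2^{2^k}$). Fortunately it is not needed: for a window $N\in[l_k,l_{k+1})$ the lower bound on the uniform exponent only requires $m_k-n_k\ge(\hat{v}-\delta)l_{k+1}\ge(\hat{v}-\delta)N$, which is precisely how Lemma \ref{lemex} argues, and for $\hat{v}=0$ that lower bound is trivial anyway; so this assertion is an error about what the proof requires, not merely an unverified hypothesis.

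The correct and uniform way to kill the overhead is the paper's: from $n_{k+1}=m_k+t_k(m_k-n_k)+r_k$ one gets $t_k\le\frac{n_{k+1}-n_k}{m_k-n_k}$, and since $m_k-n_k\to\infty$ by \eqref{as1}, $\frac{t_k}{n_{k+1}-n_k}\to0$; the Stolz--Ces\`aro theorem then yields $\frac{\sum_{i\le k}t_i}{n_{k+1}}\to0$, which is \eqref{eq31101}. Consequently $l_k=n_k+2N+1+\sum_{i<k}(t_i+2)(2N+1)=n_k\,(1+o(1))$ and $h_k-l_k-1=m_k-n_k+2N$, so the two limits above equal $\hat{v}$ and $v$ in both cases. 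With this replacement your argument goes through; note also that, thanks to \eqref{as2}, the prescribed zero blocks of the two coordinates occupy the same absolute digit positions, so only the $t_i$ (not $\bar{t}_i,\tilde{t}_i$) enter the position bookkeeping. The remaining points of your outline (the zero runs inside the filler regions being capped by $m_{k-1}-n_{k-1}+2N$ because $\{m_k-n_k\}$ is non-decreasing, and ruling out $(T_{\beta_1}^nx,T_{\beta_2}^ny)=(0,0)$ via the inserted $1$'s so that the Lemma \ref{lemex} machinery applies) agree with the paper's intended argument.
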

\begin{proof}
By the construction of $ F_{\infty} $ and the same method as in the proof of Lemma \ref{lemex}, it follows that for any $(x,y)\in F_\infty,$
\begin{equation*}
\hat{v}_{\beta_1,\beta_2}(x,y)=\lim\limits_{k\rightarrow \infty}\frac{h_k-l_k-1}{l_{k+1}-N}= \lim\limits_{k\rightarrow \infty} \frac{m_k-n_k+2N}{n_{k+1}+\sum_{i=1}^{k} (t_i+2)(2N+1)+N+1},
\end{equation*}
and
\begin{equation*}
v_{\beta_1,\beta_2}(x,y)=\lim\limits_{k\rightarrow \infty}\frac{h_k-l_k-1}{l_{k}-N}=\lim\limits_{k\rightarrow \infty} \frac{m_k-n_k+2N}{n_{k}+\sum_{i=1}^{k-1} (t_i+2)(2N+1)+N+1} .
\end{equation*}
Thus, by applying the Stolz-Ces\`{a}ro Theorem,
\begin{equation}\label{eq31101}
\lim _{k \rightarrow \infty} \frac{\sum_{i=1}^k t_i}{n_{k+1}}=\lim _{k \rightarrow \infty} \frac{t_k}{n_{k+1}-n_k}=0.
\end{equation}
Hence,
\begin{equation}\label{e2} \hat{v}_{\beta_1,\beta_2}(x,y)=\lim\limits_{k\rightarrow \infty}\frac{h_k-l_k-1}{l_{k+1}-N}=\lim\limits_{k\rightarrow \infty} \frac{m_k-n_k}{n_{k+1}}=\hat{v},
\end{equation}
and
\begin{equation}\label{e3}
v_{\beta_1,\beta_2}(x,y)=\lim\limits_{k\rightarrow \infty}\frac{h_k-l_k-1}{l_{k}-N}=\lim\limits_{k\rightarrow \infty} \frac{m_k-n_k}{n_k}=v .
\end{equation}
Therefore,
\begin{equation*}
F_\infty\subset E_{\beta_1,\beta_2}(\hat{v},v).
\end{equation*}
\end{proof}
\subsubsection{\bf Measure distribution}\

The mass distribution to be defined comes from the following consideration:

\textbullet Since the measure is supported on $F_{\infty}$, the total measure of the rectangles $R_k$  generated by the same basic square $Q_{k-1}$ is equal to the measure of this square. Note that such rectangles $R_k$ are of the same size. Therefore, it is natural to distribute the mass of $Q_{k-1}$ equally to the rectangles $R_k\in \mR_k(Q_{k-1})$.

\textbullet For the same reason, the measure of $R_k$ is equally distributed to $\mQ_k(R_k)$.

In the following we distribute a measure $\mu$ on $F_{\infty}$.

\textit{Measure of basic squares in $\mathcal{F}_1$.}
The measure of $R_1\in \mR_1(Q_0)$ is defined as
\begin{equation*}
\mu(R_{1} ):= \frac{1}{\sharp \mR_1(Q_0)}=\frac{1}{ \sharp\Sigma_{\beta_{1,N}}^{n_1}\cdot \sharp\Sigma_{\beta_{2,N}}^{n_1}} ,
\end{equation*}
and for any $Q_1 \in \mQ_1(R_1)$, we define
\begin{equation*}
\mu\left(Q_1\right):=\frac{\mu\left(R_1\right)}{\sharp \mQ_1(R_1)} =\frac{1}{ \sharp\Sigma_{\beta_{1,N}}^{n_1}\cdot \sharp\Sigma_{\beta_{2,N}}^{n_1}\cdot (\sharp \Sigma_{\beta_{1,N}}^{m_{1}-n_{1}})^{\tilde{t}_{1}} \cdot \sharp \Sigma_{\beta_{1,N}}^{\tilde{r}_{1}} } .
\end{equation*}

\textit{Measure of basic squares in $\mathcal{F}_k.$} Assume that the measure of basic squares $Q_{k-1}\in \F_{k-1}$ has already been defined. For any basic rectangle $R_{k}\in \F_k,$ there exists a unique basic square $Q_{k-1}\in\F_{k-1}$ such that $R_{k}\in\mR_k(Q_{k-1}).$ Then, we define
\begin{align*}
\mu\left(R_{k}\right)
:=&\frac{\mu\left(Q_{k-1}\right)}{\sharp \mR_k(Q_{k-1})}\\
=&\frac{1}{ \sharp\Sigma_{\beta_{1,N}}^{n_1}\cdot \sharp\Sigma_{\beta_{2,N}}^{n_1} \cdot \prod_{l=1}^{k-1} [ ( \sharp \Sigma_{\beta_{1,N}}^{m_l-n_l} )^{\tilde{t}_{l}+\bar{t}_{l}}\cdot \sharp\Sigma_{\beta_{1,N}}^{\tilde{r}_l}\cdot\sharp\Sigma_{\beta_{1,N}}^{\bar{r}_l}\cdot(\sharp \Sigma_{\beta_{2,N}}^{m_l-n_l} )^{t_{l}}\cdot \sharp\Sigma_{\beta_{2,N}}^{r_l}]}.
\end{align*}
For any basic square $Q_k\in \mQ_k(R_k),$ we set
\begin{equation*}
\mu(Q_{k} ):=\frac{\mu(R_k)}{\sharp \mQ_k(R_k)}=\frac{\mu(R_k)}{(\sharp \Sigma_{\beta_{1,N}}^{m_{k-1}-n_{k-1}})^{\tilde{t}_{k}} \cdot \sharp\Sigma_{\beta_{1,N}}^{\tilde{r}_{k}} } .
\end{equation*}

For any $n\ge1$ and any $n$th order rectangle $I_{n,\beta_1}(\omega_n) \times I_{n,\beta_2}(\nu_n)$ with $ I_{n,\beta_1}(\omega_n) \times I_{n,\beta_2}(\nu_n) \cap \F_\infty \neq \emptyset$, let $k \geq 0$ denote the integer such that $h_k < n\leq h_{k+1}$ (by setting $h_0=0$). To reach a measure, we set
\begin{equation*}
\mu(I_{n,\beta_1}(\omega_n) \times I_{n,\beta_2}(\nu_n) ) =\sum_{R_{k+1} \subset I_{n,\beta_1}(\omega_n) \times I_{n,\beta_2}(\nu_n) } \mu(R_{k+1}),
\end{equation*}
where the summation is taken over all basic rectangles $R_{k+1}\in \F_{k+1}$ contained in $ I_{n,\beta_1}(\omega_n) \times I_{n,\beta_2}(\nu_n)$.

Then by Kolmogorov's consistency theorem, $\mu$ can be uniquely extended to a probability measure supported on $F_{\infty}$.

\subsubsection{\bf Measure of basic squares in $\F_k$ }\

To apply the mass distribution principle (Lemma \ref{lem3003}), we first analyze the relationship between $\mu(Q_k)$ and $|Q_k|$.

By \eqref{as3} and Lemma \ref{lengap} \eqref{len0}, we conclude that
\begin{equation*}
\frac{1}{\bt_2^{h_k}} \cdot \frac{\sqrt{2}}{\bt_1^{2N+2}}\le |Q_k|=\left|I_{\tilde{h}_k,\beta_1} (\tilde{\omega}_k) \times I_{h_k,\beta_2}(\nu_k) \right|\le \frac{\sqrt{2}}{\bt_2^{h_k}}.
\end{equation*}
From Lemma \ref{number}, it follows that
\begin{align}\label{32303}
&\liminf\limits_{k\rightarrow\infty} \frac{\log\mu(Q_k)}{\log |Q_k|} \\
= & \liminf\limits_{k\rightarrow \infty} \frac{1}{h_k}\cdot\left[\log_{\beta_2} \sharp \Sigma _{\beta_{1,N}}^{n_1}+\log_{\beta_2} \sharp \Sigma _{\beta_{2,N}}^{n_1}+ \tilde{t}_k \log_{\beta_2} \sharp\Sigma_{\beta_{1,N}}^{m_{k-1}-n_{k-1}}+\log_{\beta_2} \sharp\Sigma_{\beta_{1,N}}^{\tilde{r}_k} \right. \nonumber\\
&+\left.\sum_{l=1}^{k-1} ((\tilde{t}_l+\bar{t}_l) \log_{\beta_2} \sharp \Sigma_{\beta_{1,N}}^{m_l-n_l}
+ \log_{\beta_2} \sharp \Sigma_{\beta_{1,N}}^{\tilde{r}_l}+\log_{\beta_2} \sharp \Sigma_{\beta_{1,N}}^{\bar{r}_l} +t_l \log_{\beta_2} \sharp \Sigma_{\beta_{2,N}}^{m_l-n_l}+\log_{\beta_2} \sharp \Sigma_{\beta_{2,N}}^{r_l})  \right] \nonumber\\
=& \liminf\limits_{k\rightarrow \infty} \frac{(l_k-\sum_{i=1}^{k-1}(h_i-l_i)- 2N-1)(\log_{\beta_2} {\beta_{1,N}}+\log_{\beta_2} {\beta_{2,N}} )}{h_k} \nonumber\\
&- \liminf\limits_{k\rightarrow \infty}\frac{\sum\limits_{i=1}^{k-1}[(\tilde{t}_i+\bar{t}_i+2)(2N+1)\log_{\bt_2}\bt_{1,N}+(t_i+1)(2N+1)\log_{\bt_2}\bt_{2,N}]}{h_k}\nonumber\\
&+ \liminf\limits_{k\rightarrow \infty}\frac{[\tilde{h}_k -h_k-(\tilde{t}_k+1)(2N+1)] \log_{\bt_2} \bt_{1,N}}{h_k}.\nonumber
\end{align}

Using the Stolz-Ces\`{a}ro Theorem, we can assert that
\begin{align}\label{32302}
\lim\limits_{k\rightarrow\infty} \frac{l_k-\sum_{i=1}^{k-1}(h_i-l_i)}{h_k} =&\lim\limits_{k\rightarrow \infty} \frac{ l_1+\sum_{i=1}^{k-1} (l_{i+1}-h_i)}{h_k} \nonumber \\
=& \lim\limits_{k\rightarrow \infty} \frac{l_k-h_{k-1}}{h_k-h_{k-1}} \nonumber\\
=& \frac{v-\hat{v}-\hat{v} v}{(1+v)(v-\hat{v})},
\end{align}
where the last equality follows from \eqref{e2} and \eqref{e3}. Note that $\tilde{t}_k+\bar{t}_k\le t_k+2$ for all $k\ge1$. Combining \eqref{as3}, \eqref{eq31101}, \eqref{32303} and \eqref{32302}, we have
\begin{equation*}
\liminf\limits_{k\rightarrow\infty} \frac{\log\mu(Q_k)}{\log |Q_k|}=\frac{(\log_{\beta_2} {\beta_{1,N}}+\log_{\beta_2} {\beta_{2,N}} )(v-\hat{v}-\hat{v} v)}{(1+v)(v-\hat{v})}+1-\log_{\beta_2}\beta_1.
\end{equation*}

Fix $\delta>0$, and let $s=\frac{(\log_{\beta_2} {\beta_{1}}+1 )(v-\hat{v}-\hat{v} v)}{(1+v)(v-\hat{v})}+ 1-\log_{\beta_2}\beta_1 -\delta$. Hence for any $k \gg 1$, we have
\begin{equation}\label{32301}
\mu\left(Q_k\right) \leq\left|Q_k\right|^{s}\ll{\beta_2}^{-sh_k}.
\end{equation}

\subsubsection{\bf Measure of a general ball}\

Let $\ep>0.$ Since \eqref{e3} holds, there exists $k_0\in\N$ such that for any $k\ge k_0$,
\begin{equation*}  \left\{\begin{array}{ll}
\frac{1}{\bt_1^{h_k}}<\frac{1}{\bt_2^{l_k}}  &\text{if}~\beta_1^{1+v}>\beta_2;\\
\frac{1}{\bt_1^{h_k}}> \frac{1}{\bt_2^{l_k}}  &\text{if}~\beta_1^{1+v}< \beta_2;\\
\frac{1}{\bt_2^{l_k(1+\ep)}}\le\frac{1}{\bt_1^{h_k}}\le \frac{1}{\bt_2^{l_k(1-\ep)}}  & \text{if}~\beta_1^{1+v}= \beta_2.
\end{array}
\right.\end{equation*}Set
$ r_0=\frac{1}{\bt_1^{\tilde{h}_{k_0}}}.$ For any $\mathbf{z}=\left(x, y\right) \in F_{\infty}$ and  $r<r_0,$ we estimate the measure of the ball $B(\mathbf{z}, r).$

Since $\mu$ is a probability measure supported on the set $F_{\infty}$, $\mu(B(\mathbf{z}, r))=0$ whenever $B(\mathbf{z}, r) \cap F_{\infty} = \emptyset$. If the ball $B(\mathbf{z}, r)$ intersects only one element in $\mathcal{F}_{k}$ for any $k>0$, then by \eqref{32301} we have $\mu(B(\mathbf{z}, r))=0$. So we can take an integer $k$ such that the ball $B(\mathbf{z}, r)$ intersects with only one element in $\mathcal{F}_{k-1}$, and at least with two elements in $\mathcal{F}_{k}$. Let $Q_{k-1}$ be the unique element in $\mathcal{F}_{k-1}$ that intersects the ball $B(\mathbf{z}, r)$.

If $r \ge \beta_2^{-h_{k-1}}$, then
\begin{equation*}
\mu(B(\mathbf{z}, r)) \leq \mu\left(Q_{k-1}\right) \ll{\beta_2}^{-sh_{k-1}} \leq r^s.
\end{equation*}
Without loss of generality, assume that $r \leq  \beta_2^{-h_{k-1}}.$ Combining  Lemma \ref{lengap} \eqref{gap0} with the fact that the ball $B(\mathbf{z}, r)$ intersects at least two elements in $\mathcal{F}_{k},$ we obtain
\begin{equation*}
\frac{1}{\bt_2^{h_k}} \asymp \frac{1}{\bt_1^{\tilde{h}_k}}\le r<\frac{1}{\beta_2^{h_{k-1}}},
\end{equation*}
which implies $k\ge k_0.$ Based on the above discussion, we consider the following three cases.

{\em {Case 1.}}
If $\beta_1^{1+v}>\beta_2$, this corresponds to {\em {Case 1}} of the upper bound. In this case we have
\begin{equation*}
\beta_2^{-h_{k-1}} \geq \beta_1^{-l_k} \geq \beta_2^{-l_k} > \beta_1^{-h_k} \geq \beta_2^{-h_k}.
\end{equation*}
We consider four subcases as follows.

{\em {Case 1.1.}} $\beta_2^{-h_{k-1}} \geq r \geq \beta_1^{-l_k}$.

We estimate the number $T$ of basic squares $Q_k\in \F_k(Q_{k-1})$ that intersect $B(\mathbf{z}, r).$ By the volume calculation and Lemma \ref{lengap} \eqref{gap0}, we have
\begin{align*}
  &\sharp\left\{R_k \in \mR_k\left(Q_{k-1}\right): R_k \cap B(\mathbf{z}, r) \neq \emptyset\right\} \\
\leq &\left(2 \cdot r \cdot \beta_1^{l_k}+2\right)\left(2 \cdot r \cdot \beta_2^{l_k}+2\right).
\end{align*} Thus,
\begin{equation*}
  T\le \sharp\left\{R_k \in \mR_k\left(Q_{k-1}\right): R_k \cap B(\mathbf{z}, r) \neq \emptyset\right\}\cdot \sharp  \mQ_k(R_{k}).
\end{equation*}

Hence we obtain
\begin{equation*}
\begin{aligned}
\mu(B(\mathbf{z}, r))
&\leq T \cdot \mu(Q_k)\ll r^2 \cdot \beta_1^{l_k}\beta_2^{l_k} \cdot \mu(R_k)\\
&= r^2 \cdot \beta_1^{l_k}\cdot \beta_2^{l_k} \cdot \beta_1^{-[l_k - \sum_{i=1}^{k-1}(h_i - l_i)-2N-1-\sum_{i=1}^{k-1}(\tilde{t}_i+\bar{t}_i+2)(2 N+1)]} \\
&\times \beta_2^{-[l_k - \sum_{i=1}^{k-1}(h_i - l_i)-2N-1-\sum_{i=1}^{k-1}(t_i+1)(2 N+1)]} .
\end{aligned}
\end{equation*}
Combining \eqref{eq31101}, if $t$ satisfies
\begin{equation}\label{32402}
t\leq\liminf _{k \rightarrow \infty} \left[2+\frac{l_k \log \beta_1 + l_k \log \beta_2 - [l_k - \sum_{i=1}^{k-1}(h_i - l_i)](\log \beta_1 + \log \beta_2)}{\log r}\right]=:t_1,
\end{equation}
then for any sufficiently small $r$, we get $\mu(B(\mathbf{z}, r)) \ll  r^t$ for every $t\leq t_1$.

According to the Stolz-Ces\`{a}ro Theorem, we can deduce that
\begin{equation}\label{32401}
\begin{split}
\lim _{k \rightarrow \infty} \frac{l_k-\sum_{i=1}^{k-1}\left(h_i-l_i\right)}{l_k}
& =\lim _{k \rightarrow \infty} \frac{l_k-h_{k-1}}{l_k-l_{k-1}} \\
& =\frac{v-\hat{v}-\hat{v} v}{v-\hat{v}},
\end{split}
\end{equation}
where the final equality follows from \eqref{e2} and \eqref{e3}.

Notice that by applying \eqref{32401}, we have
\begin{equation*}
\begin{aligned}
t_1&\leq \liminf _{k \rightarrow \infty}\left[ 2 + \frac{l_k \log \beta_1 + l_k \log \beta_2 - [l_k - \sum_{i=1}^{k-1}(h_i -l_i)](\log \beta_1 + \log \beta_2)}{-{h_{k-1}} \log \beta_2}\right] \\
&=2-\left(1+\log _{\beta_2} \beta_1\right)\frac{v^2}{(1+v)(v-\hat{v})},
\end{aligned}
\end{equation*}
that is
\begin{equation*}
 t\leq 2-\left(1+\log _{\beta_2} \beta_1\right)\frac{v^2}{(1+v)(v-\hat{v})}.
\end{equation*}
Hence, \eqref{32402} holds.

{\em {Case 1.2.}}   $\beta_1^{-l_k} > r \geq \beta_2^{-l_k}$.

Using the same method as in {\em Case 1.1}, we have
\begin{equation*}
\begin{aligned}
\mu(B(\mathbf{z}, r))
&\leq T \cdot \mu(Q_k)\ll r\beta_2^{l_k} \cdot \mu(R_k) \\
&\ll r^2 \cdot \beta_2^{l_k} \cdot \beta_1^{-[l_k - \sum_{i=1}^{k-1}(h_i - l_i)-2N-1-\sum_{i=1}^{k-1}(\tilde{t}_i+\bar{t}_i+2)(2 N+1)]} \\
&\times \beta_2^{-[l_k - \sum_{i=1}^{k-1}(h_i - l_i)-2N-1-\sum_{i=1}^{k-1}(t_i+1)(2 N+1)]}.
\end{aligned}
\end{equation*}
Combining \eqref{eq31101}, if $t$ satisfies
\begin{equation}\label{c121}
\begin{aligned}
 t&\leq \liminf _{k \rightarrow \infty} \left[1 + \frac{l_k \log \beta_2 - [n_k - \sum_{i=1}^{k-1}(m_i - n_i)](\log \beta_1 + \log \beta_2)}{\log r}\right] \\
&= \min \left\{\liminf _{k \rightarrow \infty} \left[1 + \frac{l_k \log \beta_2 - \left[n_k - \sum_{i=1}^{k-1}(m_i - n_i)\right](\log \beta_1 + \log \beta_2)}{-l_k \log \beta_1}\right],\right. \\
&\left.\quad  \liminf _{k \rightarrow \infty} \left[1 + \frac{l_k \log \beta_2 - \left[n_k - \sum_{i=1}^{k-1}(m_i - n_i)\right](\log \beta_1 + \log \beta_2)}{-l_k \log \beta_2}\right]\right\} \\
&=:t_2,
\end{aligned}
\end{equation}
for any sufficiently small $r$, it follows that $\mu(B(\mathbf{z}, r)) \ll r^t$ for every $t\leq t_2$. Then, by combining \eqref{32401}, one has
\begin{equation*}
\begin{aligned}
&\liminf _{k \rightarrow \infty} \left[1 + \frac{l_k \log \beta_2 - \left[n_k - \sum_{i=1}^{k-1}(m_i - n_i)\right](\log \beta_1 + \log \beta_2)}{-l_k \log \beta_1}\right]\\
= & 1-\log _{\beta_1}{\beta_2} + (1+\log _{\beta_1}{\beta_2}) \frac{v- \hat{v}-v\hat{v}}{v - \hat{v}},
\end{aligned}
\end{equation*}
and
\begin{equation*}
\begin{aligned}
&\liminf _{k \rightarrow \infty} \left[1 + \frac{l_k \log \beta_2 - \left[n_k - \sum_{i=1}^{k-1}(m_i - n_i)\right](\log \beta_1 + \log \beta_2)}{-l_k \log \beta_2}\right]\\
= &(1+\log _{\beta_2}{\beta_1}) \frac{v- \hat{v}-v\hat{v}}{v - \hat{v}}.
\end{aligned}
\end{equation*}
That is to say,
\begin{equation*}
t\leq \min \left\{1-\log _{\beta_1}{\beta_2} + (1+\log _{\beta_1}{\beta_2}) \frac{v- \hat{v}-v\hat{v}}{v - \hat{v}}, ~(1+\log _{\beta_2}{\beta_1})\frac{v- \hat{v}-v\hat{v}}{v - \hat{v}} \right\}.
\end{equation*}
Therefore, \eqref{c121} holds.

{\em {Case 1.3.}} $\beta_2^{-l_k} > r > \beta_1^{-h_k}$.

In this situation, the ball $B(\mathbf{z}, r)$ intersects exactly one rectangle. Hence we obtain
\begin{equation*}
\begin{aligned}
\mu(B(\mathbf{z}, r))
&\leq T \cdot \mu(Q_k)\\
&\leq \beta_1^{-[l_k - \sum_{i=1}^{k-1}(h_i - l_i)-2N-1-\sum_{i=1}^{k-1}(\tilde{t}_i+\bar{t}_i+2)(2 N+1)]} \\
&\times \beta_2^{-[l_k - \sum_{i=1}^{k-1}(h_i - l_i)-2N-1-\sum_{i=1}^{k-1}(t_i+1)(2 N+1)]}.
\end{aligned}
\end{equation*}
Combining \eqref{eq31101}, if $t$ satisfies
\begin{equation}\label{c131}
\begin{aligned}
t&\leq \liminf _{k \rightarrow \infty} \frac{-\left[n_k - \sum_{i=1}^{k-1}(m_i - n_i)\right](\log \beta_1 + \log \beta_2)}{\log r} \\
&= \min \left\{\liminf _{k \rightarrow \infty}\frac{\left[n_k - \sum_{i=1}^{k-1}(m_i - n_i)\right](\log \beta_1 + \log \beta_2)}{l_k \log \beta_2},\right. \\
&\left.\quad \liminf _{k \rightarrow \infty}\frac{\left[n_k - \sum_{i=1}^{k-1}(m_i - n_i)\right](\log \beta_1 + \log \beta_2)}{h_k \log \beta_1}\right\}=:t_3,
\end{aligned}
\end{equation}
for any sufficiently small $r$, it can be shown that $\mu(B(\mathbf{z}, r)) \leq r^t$ holds for every $t\leq t_3$.
Using \eqref{32302} and \eqref{32401}, we obtain
\begin{equation*}
\liminf _{k \rightarrow \infty}\frac{\left[n_k - \sum_{i=1}^{k-1}(m_i - n_i)\right](\log \beta_1 + \log \beta_2)}{l_k \log \beta_2}
= (1+\log _{\beta_2} \beta_1) \frac{v-\hat{v}-v \hat{v}}{v - \hat{v}}
\end{equation*}
and
\begin{equation*}
\liminf _{k \rightarrow \infty}\frac{\left[n_k - \sum_{i=1}^{k-1}(m_i - n_i)\right](\log \beta_1 + \log \beta_2)}{h_k \log \beta_1}
= (1+\log _{\beta_1} \beta_2)\cdot\frac{v-\hat{v}-v \hat{v}}{(1 + v)(v - \hat{v})}.
\end{equation*}
Thus, we have
\begin{equation*}
t\leq \min \left\{(1+\log _{\beta_2} \beta_1)\frac{v-\hat{v}-v \hat{v}}{v - \hat{v}},(1+\log _{\beta_1} \beta_2)\cdot\frac{v-\hat{v}-v \hat{v}}{(1 + v)(v - \hat{v})}\right\} .
\end{equation*}
Hence, \eqref{c131} holds.

{\em {Case 1.4.}}  \( \beta_1^{-h_k} \geq r \geq \beta_2^{-h_k} \).

In this situation, we deduce that
\begin{equation*}
\begin{aligned}
\mu(B(\mathbf{z}, r))
&\leq T \cdot \mu(Q_k) \leq \frac{r}{\beta_2^{-h_k}} \cdot\mu(R_k) \cdot \frac{\beta_2^{-h_k}}{\beta_1^{-h_k}}\\
&=r\cdot\beta_1^{h_k} \beta_1^{-[l_k - \sum_{i=1}^{k-1}(h_i - l_i)-2N-1-\sum_{i=1}^{k-1}(\tilde{t}_i+\bar{t}_i+2)(2 N+1)]} \\
&\times \beta_2^{-[l_k - \sum_{i=1}^{k-1}(h_i - l_i)-2N-1-\sum_{i=1}^{k-1}(t_i+1)(2 N+1)]}.
\end{aligned}
\end{equation*}
Combining \eqref{eq31101}, if $t$ satisfies
\begin{equation}\label{c141}
\begin{aligned}
t
&\leq \min \left\{\liminf _{k \rightarrow \infty} \left[1 + \frac{h_k \log \beta_1 - [n_k - \sum_{i=1}^{k-1}(m_i - n_i)](\log \beta_1 + \log \beta_2)}{-h_k \log \beta_1}\right],\right. \\
&\left. \quad  \liminf _{k \rightarrow \infty} \left[1 + \frac{h_k \log \beta_1 - \left[n_k - \sum_{i=1}^{k-1}(m_i - n_i)\right](\log \beta_1 + \log \beta_2)}{-h_k \log \beta_2}\right]\right\} \\
&=:t_4,
\end{aligned}
\end{equation}
then for any sufficiently small $r$, we have $\mu(B(\mathbf{z}, r)) \leq r^t$ for every $t\leq t_4$. By applying \eqref{32302}, we can obtain
\begin{equation*}
\begin{aligned}
&\liminf _{k \rightarrow \infty} \left[1 + \frac{h_k \log \beta_1 - [n_k - \sum_{i=1}^{k-1}(m_i - n_i)](\log \beta_1 + \log \beta_2)}{-h_k \log \beta_1}\right]\\
= & (1+\log _{\beta_1} \beta_2)\frac{v-\hat{v}-v \hat{v}}{(1+v)(v - \hat{v})},
\end{aligned}
\end{equation*}
and
\begin{equation*}
\begin{aligned}
&\liminf _{k \rightarrow \infty} \left[1 + \frac{h_k \log \beta_1 - \left[n_k - \sum_{i=1}^{k-1}(m_i - n_i)\right](\log \beta_1 + \log \beta_2)}{-h_k \log \beta_2}\right]\\
=& 1 - \log _{\beta_2} \beta_1 + (1+\log _{\beta_2} \beta_1)\frac{v- \hat{v}-v \hat{v}}{(1 + v)(v - \hat{v})}.
\end{aligned}
\end{equation*}
Equivalently, if
\begin{equation*}
t\leq \min \left\{(1+\log _{\beta_1} \beta_2)\frac{v-\hat{v}-v \hat{v}}{(1+v)(v - \hat{v})}, 1 - \log _{\beta_2} \beta_1 + (1+\log _{\beta_2} \beta_1)\frac{v- \hat{v}-v \hat{v}}{(1 + v)(v - \hat{v})}\right\},
\end{equation*}
then \eqref{c141} holds.

From the four cases presented above, we deduce that for any $t\leq \min\{t_1,t_2,t_{3},t_{4}\}$,
\begin{equation*}
\mu(B(\mathbf{z}, r)) \ll r^t.
\end{equation*}
Thus, applying Lemma \ref{lem3003}, we get
\begin{equation*}
\hdim E_{\beta_1, \beta_2}(\hat{v}, v)\geq t.
\end{equation*}
Then, by the arbitrariness of $t$, we have
\begin{equation*}
\hdim E_{\beta_1, \beta_2}(\hat{v}, v)\geq \min\{t_1,t_2,t_3,t_{4}\}.
\end{equation*}
Using the inequality $\hat{v} \leq \frac{v}{1+v}$, we obtain
\begin{equation*}
\begin{aligned}
&\min\{t_1,t_2,t_{3},t_{4}\}\\
=& \min \left\{\left(1+\log _{\beta_1} {\beta_2}\right)\frac{v-\hat{v}-v \hat{v}}{(1+v)(v - \hat{v})},1 - \log _{\beta_2} {\beta_1} + \left(1+\log _{\beta_2} {\beta_1}\right) \frac{v-\hat{v}-v \hat{v}}{(1 + v)(v - \hat{v})}\right\}.
\end{aligned}
\end{equation*}
Hence, when $ \beta_1^{1+v} > \beta_2$, we conclude that
\begin{equation}\label{c1}
\begin{aligned}
&\hdim E_{\beta_1, \beta_2}(\hat{v}, v) \\
\geq& \min \left\{\frac{(1+\log _{\beta_1} {\beta_2})(v-\hat{v}-v \hat{v})}{(1+v)(v - \hat{v})},1 - \log _{\beta_2} {\beta_1} + \frac{(1+\log _{\beta_2} {\beta_1})(v-\hat{v}-v \hat{v})}{(1 + v)(v - \hat{v})}\right\}.
\end{aligned}
\end{equation}

{\em {Case 2.}}
If $\beta_1^{1+v}< \beta_2$, this situation is denoted as {\em {Case 2}} for the upper bound. Taking into account the construction methods of the Cantor subset and the sequences $\left\{h_k\right\}$ and $\left\{l_k\right\}$, it follows that $\beta_1^{-h_k}> \beta_2^{-l_k}$. In this case, we have
\begin{equation*}
\beta_2^{-h_{k-1}} \geq \beta_1^{-l_k} \geq \beta_1^{-h_k} >  \beta_2^{-l_k} \geq  \beta_2^{-h_k}.
\end{equation*}
There are four subcases, which are listed as follows.

{\em {Case 2.1.}} $\beta_2^{-h_{k-1}} \geq r \geq \beta_1^{-l_k} $.

Similarly to {\em {Case 1.1}}, we also have
\begin{equation*}
\begin{aligned}
\mu(B(\mathbf{z}, r)) & \leq T \cdot \mu\left(Q_k\right) \\
& \ll r^2\cdot \beta_1^{l_k}\cdot\beta_2^{l_k}\cdot \beta_1^{-[l_k - \sum_{i=1}^{k-1}(h_i - l_i)-2N-1-\sum_{i=1}^{k-1}(\tilde{t}_i+\bar{t}_i+2)(2 N+1)]} \\
&\times \beta_2^{-[l_k - \sum_{i=1}^{k-1}(h_i - l_i)-2N-1-\sum_{i=1}^{k-1}(t_i+1)(2 N+1)]},
\end{aligned}
\end{equation*}
and thus, for any sufficiently small $r$, we obtain $\mu(B(\mathbf{z}, r))\ll r^t$ for every $t\leq t_1$.

{\em {Case 2.2.}} $ \beta_1^{-l_k} > r \geq \beta_1^{-h_k} $.

Now, we can obtain
\begin{equation*}
\begin{aligned}
\mu(B(\mathbf{z}, r))
&\ll r \cdot\beta_2^{l_k} \cdot\beta_1^{-[l_k - \sum_{i=1}^{k-1}(h_i - l_i)-2N-1-\sum_{i=1}^{k-1}(\tilde{t}_i+\bar{t}_i+2)(2 N+1)]} \\
&\times \beta_2^{-[l_k - \sum_{i=1}^{k-1}(h_i - l_i)-2N-1-\sum_{i=1}^{k-1}(t_i+1)(2 N+1)]} .
\end{aligned}
\end{equation*}
Combining \eqref{eq31101}, if $t$ satisfies
\begin{equation}\label{c221}
\begin{aligned}
t &\leq \min \left\{\liminf _{k \rightarrow \infty} \left[1 + \frac{l_k \log \beta_2 - [n_k - \sum_{i=1}^{k-1}(m_i - n_i)](\log \beta_1 + \log \beta_2)}{-l_k \log \beta_1}\right],\right. \\
&\left.\quad \liminf _{k \rightarrow \infty} \left[ 1 + \frac{l_k \log \beta_2 - [n_k - \sum_{i=1}^{k-1}(m_i - n_i)](\log \beta_1 + \log \beta_2)}{-h_k \log \beta_1}\right]\right\} \\
&=:t_5,
\end{aligned}
\end{equation}
then for any sufficiently small $r$, we get $\mu(B(\mathbf{z}, r)) \ll r^t$ for every $t \leq t_5$. Observe that, by \eqref{32302} and \eqref{32401},
\begin{equation*}
\begin{aligned}
&\liminf _{k \rightarrow \infty} \left[1 + \frac{l_k \log \beta_2 - [n_k - \sum_{i=1}^{k-1}(m_i - n_i)](\log \beta_1 + \log \beta_2)}{-l_k \log \beta_1}\right]\\
=&1-\log _{\beta_1}{\beta_2} + (1+\log _{\beta_1}{\beta_2}) \frac{v- \hat{v}-v\hat{v}}{v - \hat{v}}
\end{aligned}
\end{equation*}
and
\begin{equation*}
\begin{aligned}
&\liminf _{k \rightarrow \infty} \left[ 1 + \frac{l_k \log \beta_2 - [n_k - \sum_{i=1}^{k-1}(m_i - n_i)](\log \beta_1 + \log \beta_2)}{-h_k \log \beta_1}\right]\\
=&1 - \frac{1}{(1 + v)} \log _{\beta_1} \beta_2 + \left(1+\log _{\beta_1} \beta_2\right) \frac{v-\hat{v}-v \hat{v}}{(1+v)(v-\hat{v})} .
\end{aligned}
\end{equation*}
Thus, we conclude that
\begin{equation*}
\begin{aligned}
t\leq& \min \left\{1-\log _{\beta_1}{\beta_2} +(1+\log _{\beta_1}{\beta_2}) \frac{v- \hat{v}-v\hat{v}}{v - \hat{v}},\right. \\
&\left.1 - \frac{1}{(1 + v)} \log _{\beta_1} \beta_2 + \left(1+\log _{\beta_1} \beta_2\right) \frac{v-\hat{v}-v \hat{v}}{(1+v)(v-\hat{v})}\right\},
\end{aligned}
\end{equation*}
and then \eqref{c221} holds.

{\em {Case 2.3.}} $ \beta_1^{-h_k} > r > \beta_2^{-l_k} $.

In this situation, we first estimate
\begin{equation*}
\begin{aligned}
\mu(B(\mathbf{z}, r)) &\ll  r^2\beta_2^{l_k} \cdot \beta_1^{h_k} \cdot \beta_1^{-[l_k - \sum_{i=1}^{k-1}(h_i - l_i)-2N-1-\sum_{i=1}^{k-1}(\tilde{t}_i+\bar{t}_i+2)(2 N+1)]} \\
&\times \beta_2^{-[l_k - \sum_{i=1}^{k-1}(h_i - l_i)-2N-1-\sum_{i=1}^{k-1}(t_i+1)(2 N+1)]} .
\end{aligned}
\end{equation*}
Combining \eqref{eq31101}, if $t$ satisfies
\begin{equation}\label{c231}
\begin{aligned}
t &\leq\min \left\{\liminf _{k \rightarrow \infty} \left[2 + \frac{l_k \log \beta_2 - [n_k - \sum_{i=1}^{k-1}(m_i - n_i)](\log \beta_1 + \log \beta_2) + h_k \log \beta_1}{-h_k \log \beta_1}\right],\right. \\
&\left.\quad \liminf _{k \rightarrow \infty} \left[2 + \frac{l_k \log \beta_2 - [n_k - \sum_{i=1}^{k-1}(m_i - n_i)](\log \beta_1 + \log \beta_2) + h_k \log \beta_1}{-l_k \log \beta_2}\right]\right\} \\
&=:t_6,
\end{aligned}
\end{equation}
and consequently, for any sufficiently small $r$, it can be shown that $\mu(B(\mathbf{z}, r)) \ll r^t$ holds for every $t\leq t_6$.
From \eqref{32302} and \eqref{32401},
\begin{equation*}
\begin{aligned}
&\liminf _{k \rightarrow \infty} \left[2 + \frac{l_k \log \beta_2 - [n_k - \sum_{i=1}^{k-1}(m_i - n_i)](\log \beta_1 + \log \beta_2) + h_k \log \beta_1}{-h_k \log \beta_1}\right]\\
=&1 - \frac{1}{(1 + v)} \log _{\beta_1} \beta_2 + \left(1+\log _{\beta_1} \beta_2\right) \frac{v-\hat{v}-v \hat{v}}{(1+v)(v-\hat{v})}
\end{aligned}
\end{equation*}
and
\begin{equation*}
\begin{aligned}
&\liminf _{k \rightarrow \infty} \left[2 + \frac{l_k \log \beta_2 - [n_k - \sum_{i=1}^{k-1}(m_i - n_i)](\log \beta_1 + \log \beta_2) + h_k \log \beta_1}{-l_k \log \beta_2}\right]\\
=&1 - (1 + v) \log _{\beta_2} \beta_1 + \left(1+\log _{\beta_2} \beta_1\right)  \frac{v- \hat{v}-v\hat{v}}{v - \hat{v}}.
\end{aligned}
\end{equation*}
That is,
\begin{equation*}
\begin{aligned}
t &\leq\min \left\{1 - \frac{1}{(1 + v)} \log _{\beta_1} \beta_2 + \left(1+\log _{\beta_1} \beta_2\right) \frac{v-\hat{v}-v \hat{v}}{(1+v)(v-\hat{v})},\right. \\
&\left.\quad 1 - (1 + v) \log _{\beta_2} \beta_1 + \left(1+\log _{\beta_2} \beta_1\right)  \frac{v- \hat{v}-v\hat{v}}{v - \hat{v}}\right\},
\end{aligned}
\end{equation*}
in this case, \eqref{c231} holds.

{\em {Case 2.4.}} $ \beta_2^{-l_k} \geq r \geq \beta_2^{-h_k} $.

In this situation, we first calculate
\begin{equation*}
\begin{aligned}
\mu(B(\mathbf{z}, r))
&\leq T \cdot \mu(Q_k)\\
&\leq \frac{r}{\beta_2^{-h_k}} \cdot\mu(R_k) \cdot \frac{\beta_2^{-h_k}}{\beta_1^{-h_k}}\\
&=r\cdot\beta_1^{h_k} \cdot \beta_1^{-[l_k - \sum_{i=1}^{k-1}(h_i - l_i)-2N-1-\sum_{i=1}^{k-1}(\tilde{t}_i+\bar{t}_i+2)(2 N+1)]} \\
&\times \beta_2^{-[l_k - \sum_{i=1}^{k-1}(h_i - l_i)-2N-1-\sum_{i=1}^{k-1}(t_i+1)(2 N+1)]} .
\end{aligned}
\end{equation*}
Combining \eqref{eq31101}, if $t$ satisfies
\begin{equation}\label{c241}
\begin{aligned}
t &\leq \min \left\{\liminf _{k \rightarrow \infty} \left[1 + \frac{h_k \log \beta_1 - [n_k - \sum_{i=1}^{k-1}(m_i - n_i)](\log \beta_1 + \log \beta_2)}{-l_k \log \beta_2}\right],\right. \\
&\left.\quad  \liminf _{k \rightarrow \infty} \left[1 + \frac{h_k \log \beta_1 - [n_k - \sum_{i=1}^{k-1}(m_i - n_i)](\log \beta_1 + \log \beta_2)}{-h_k \log \beta_2}\right]\right\} \\
&=:t_7,
\end{aligned}
\end{equation}
then, for any sufficiently small $r$, we obtain $\mu(B(\mathbf{z}, r)) \leq r^t$ for every $t\leq t_7$.
By \eqref{32302} and \eqref{32401},
\begin{equation*}
\begin{aligned}
&\liminf _{k \rightarrow \infty} \left[1 + \frac{h_k \log \beta_1 - [n_k - \sum_{i=1}^{k-1}(m_i - n_i)](\log \beta_1 + \log \beta_2)}{-l_k \log \beta_2}\right]\\
=&1 - (1 + v) \log _{\beta_2} \beta_1 + \left(1+\log _{\beta_2} \beta_1\right)  \frac{v - \hat{v}-v \hat{v}}{v - \hat{v}}
\end{aligned}
\end{equation*}
and
\begin{equation*}
\begin{aligned}
&\liminf _{k \rightarrow \infty} \left[1 + \frac{h_k \log \beta_1 - [n_k - \sum_{i=1}^{k-1}(m_i - n_i)](\log \beta_1 + \log \beta_2)}{-h_k \log \beta_2}\right]\\
=&1 - \log _{\beta_2} \beta_1 + \left(1+\log _{\beta_2} \beta_1\right)\cdot \frac{v-\hat{v}-v \hat{v}}{(1+v) (v - \hat{v})} .
\end{aligned}
\end{equation*}
So we obtain
\begin{equation*}
\begin{aligned}
t &\leq\min \left\{1 - (1 + v) \log _{\beta_2} \beta_1 + \left(1+\log _{\beta_2} \beta_1\right)  \left(1 - \frac{v \hat{v}}{v - \hat{v}}\right),\right. \\
&\left.\quad 1 - \log _{\beta_2} \beta_1 + \left(1+\log _{\beta_2} \beta_1\right)\cdot \frac{v-\hat{v}-v \hat{v}}{(1+v) (v - \hat{v})}\right\}.
\end{aligned}
\end{equation*}
Therefore, \eqref{c241} holds.

From the four cases above, we conclude that
\begin{equation*}
\mu(B(\mathbf{z}, r)) \ll r^t \text{~for any~} t\leq \min\{t_1,t_{5},t_6,t_{7}\}.
\end{equation*}
Therefore, by Lemma \ref{lem3003}, we get
\begin{equation*}
\hdim E_{\beta_1, \beta_2}(\hat{v}, v)\geq t.
\end{equation*}
Then, by the arbitrariness of $t$, we have
\begin{equation*}
\hdim E_{\beta_1, \beta_2}(\hat{v}, v)\geq \min\{t_1,t_{5},t_6,t_{7}\}.
\end{equation*}
By $\hat{v} \leq \frac{v}{1+v}$, we obtain
\begin{equation*}
\begin{aligned}
&\min\{t_1,t_{5},t_6,t_{7}\}\\
= &\min  \left\{1-\frac{1}{(1+v)}\log _{\beta_1} {\beta_2}+\left(1+\log _{\beta_1} {\beta_2}\right)\cdot \frac{v-\hat{v}-v \hat{v}}{(1+v)(v-\hat{v})},\right. \\
&\left. 1 - \log _{\beta_2} {\beta_1} + \left(1+\log _{\beta_2} {\beta_1}\right)\cdot\frac{v-\hat{v}-v \hat{v}}{(1 + v)(v - \hat{v})}\right\}.
\end{aligned}
\end{equation*}
Hence, when $ \beta_1^{-h_k} >\beta_2^{-l_k}$, we conclude that
\begin{equation}\label{c2}
\begin{aligned}
&\hdim E_{\beta_1, \beta_2}(\hat{v}, v) \\
\geq& \min \left\{1-\frac{1}{(1+v)}\log _{\beta_1} {\beta_2}+ \frac{(1+\log _{\beta_1} {\beta_2})(v-\hat{v}-v \hat{v})}{(1+v)(v-\hat{v})},\right. \\
&\left. 1 - \log _{\beta_2} {\beta_1} + \frac{(1+\log _{\beta_2} {\beta_1})(v-\hat{v}-v \hat{v})}{(1 + v)(v - \hat{v})}\right\}.
\end{aligned}
\end{equation}

{\em {Case 3.}}
Let $\varepsilon>0$. When $\beta_1^{1+v}=\beta_2$, we consider two cases: $\beta_1^{-h_k} \leq \beta_2^{-l_k(1-\varepsilon)}$ and $\beta_1^{-h_k} \geq \beta_2^{-l_k(1+\varepsilon)}$.

{\em {Case 3.1.}} $\beta_1^{-h_k} \leq \beta_2^{-l_k(1-\varepsilon)}$. This condition is similar to that in {\em {Case 1}}, and we only need to change $\beta_2^{-l_k}$ to $\beta_2^{-l_k(1-\varepsilon)}$ in the arguments for {\em {Case 1}}. By the same reasiong, we have
\begin{equation*}
\mu(B(\mathbf{z}, r)) \ll r^t \text{~ for any~} t \leq \min \left\{t_1, t_2^{\prime}, t_3^{\prime}, t_4\right\},
\end{equation*}
where $t_1$ and $t_4$ are the same as in {\em {Case 1}},
\begin{equation*}
t_2^{\prime} := \min \left\{1-(1-\varepsilon)\log _{\beta_1} \beta_2+\left(1+\log _{\beta_1} \beta_2\right) \frac{v-\hat{v}-v \hat{v}}{v-\hat{v}},\frac{(1+\log _{\beta_2} \beta_1)(v-\hat{v}-v\hat{v})}{(1-\varepsilon)(v-\hat{v})}\right\},
\end{equation*}
and
\begin{equation*}
t_3^{\prime} := \min \left\{\left(1+\log _{\beta_1} \beta_2\right)\frac{v-\hat{v}-v\hat{v}}{(1+v)(v-\hat{v})},\frac{(1+\log _{\beta_2} \beta_1)(v-\hat{v}-v\hat{v})}{(1-\varepsilon)(v-\hat{v})}\right\}.
\end{equation*}
Hence, by Lemma \ref{lem3003}, it follows that
\begin{equation*}
\hdim E_{\beta_1, \beta_2}(\hat{v}, v)\geq t.
\end{equation*}
By the arbitrariness of $t$, we have
\begin{equation*}
\hdim E_{\beta_1, \beta_2}(\hat{v}, v)\geq \min \left\{t_1, t_2^{\prime}, t_3^{\prime}, t_4\right\}.
\end{equation*}
Note that $\hat{v} \leq \frac{v}{1+v}$ under the condition $\beta_1^{-h_k} \leq \beta_2^{-l_k(1-\varepsilon)}$. Letting $\varepsilon \rightarrow 0$, we obtain
\begin{equation*}
\begin{aligned}
& \hdim E_{\beta_1, \beta_2}(\hat{v}, v) \\
\geq & \min \left\{\frac{\left(1+\log _{\beta_1} \beta_2\right)(v-\hat{v}-v \hat{v})}{(1+v)(v-\hat{v})}, 1-\log _{\beta_2} \beta_1+\frac{\left(1+\log _{\beta_2} \beta_1\right)(v-\hat{v}-v \hat{v})}{(1+v)(v-\hat{v})}\right\}.
\end{aligned}
\end{equation*}
This is equivalent to the result of \eqref{c1}.

{\em {Case 3.2.}} $\beta_1^{-h_k} \geq \beta_2^{-l_k(1+\varepsilon)}$. This situation is comparable to that in {\em {Case 2}}. We simply replace $\beta_2^{-l_k}$ with $\beta_2^{-l_k(1+\varepsilon)}$ in the arguments for {\em {Case 2}} and thus obtain
\begin{equation*}
\mu(B(\mathbf{z}, r)) \ll r^t \text { for any } t \leq \min \left\{t_1, t_5,  t_6^{\prime}, t_7^{\prime}\right\},
\end{equation*}
where $t_1$ and $t_5$ take the same values as in {\em {Case 2}},
\begin{equation*}
\begin{aligned}
t_6^{\prime} := & \min \left\{1-\frac{1+\varepsilon}{(1+v)} \log _{\beta_1} \beta_2+\left(1+\log _{\beta_1} \beta_2\right) \frac{v-\hat{v}-v \hat{v}}{(1+v)(v-\hat{v})},\right. \\
& \left.1-\frac{1+v}{1+\varepsilon}\log _{\beta_2} \beta_1+ \frac{(1+\log _{\beta_2} \beta_1)(v-\hat{v}-v \hat{v})}{(1+\varepsilon)(v-\hat{v})}\right\},
\end{aligned}
\end{equation*}
and
\begin{equation*}
\begin{aligned}
t_7^{\prime} := & \min \left\{1-\frac{1+v}{1+\varepsilon} \log _{\beta_2} \beta_1+\frac{(1+\log _{\beta_2} \beta_1)(v-\hat{v}-v \hat{v})}{(1+\varepsilon)(v-\hat{v})}, \right. \\
& \left.1-\log _{\beta_2} \beta_1+\left(1+\log _{\beta_2} \beta_1\right)  \frac{v-\hat{v}-v \hat{v}}{(1+v)(v-\hat{v})}\right\}.
\end{aligned}
\end{equation*}
Thus, from Lemma \ref{lem3003}, we obtain
\begin{equation*}
\hdim E_{\beta_1, \beta_2}(\hat{v}, v)\geq t.
\end{equation*}
By the arbitrariness of $t$, it follows that
\begin{equation*}
\hdim E_{\beta_1, \beta_2}(\hat{v}, v)\geq \min \left\{t_1, t_5,  t_6^{\prime}, t_7^{\prime}\right\}.
\end{equation*}
Applying the inequality $\hat{v} \leq \frac{v}{1+v}$ under $\beta_1^{-h_k} \geq \beta_2^{-l_k(1-\varepsilon)}$ and taking the the limit as $\varepsilon \rightarrow 0$, we conclude that
\begin{equation*}
\begin{aligned}
& \hdim E_{\beta_1, \beta_2}(\hat{v}, v) \\
\geq & \min \left\{1-\frac{1}{(1+v)} \log _{\beta_1} \beta_2+\frac{\left(1+\log _{\beta_1} \beta_2\right)(v-\hat{v}-v \hat{v})}{(1+v)(v-\hat{v})},\right. \\
& \left.1-\log _{\beta_2} \beta_1+\frac{\left(1+\log _{\beta_2} \beta_1\right)(v-\hat{v}-v \hat{v})}{(1+v)(v-\hat{v})}\right\} .
\end{aligned}
\end{equation*}
This is the same as the result of \eqref{c2}.

According to \eqref{c1} and \eqref{c2}, we have thus established the lower bound of the Hausdorff dimension of $E_{\beta_1, \beta_2}(\hat{v}, v)$.

\section{\bf Proof of Theorem \ref{thm2}}\
In this section, we provide the proof of Theorem \ref{thm2}, which is a corollary of Theorem \ref{thm3}.

Firstly, when $\hat{v}=v=0$, we have
\begin{equation*}
\mathcal{L}^2\left(E_{\beta_1, \beta_2}\left(0, 0\right)\right)=1.
\end{equation*}
Moreover, since
\begin{equation*}
E_{\beta_1, \beta_2}\left(0, 0\right) \subset \left\{(x, y) \in [0,1]^2: \hat{v}_{\beta_1, \beta_2}(x, y)=0\right\},
\end{equation*}
we obtain
\begin{equation*}
\hdim \left\{(x, y) \in[0,1]^2: \hat{v}_{\beta_1, \beta_2}(x, y)=0\right\}=2.
\end{equation*}

Next, when $\hat{v}>1$, it follows from (\ref{tm1.6}) that
\begin{equation*}
\hdim \left\{(x, y) \in [0,1]^2: \hat{v}_{\beta_1, \beta_2}(x, y) \geq \hat{v}\right\}=0.
\end{equation*}

It remains to consider the case when $0<\hat{v}\leq1$. Observe that the inclusion \( E_{\beta_1, \beta_2}(\hat{v}, v) \subset U_{\beta_1, \beta_2}(\hat{v}) \) holds for any \( 0 \le v \le \infty \). Consequently,
\begin{equation*}
\hdim U_{\bt_1,\bt_2}(\hv) \ge \sup\limits_{0\le v \le \infty} \hdim E_{\bt_1,\bt_2}(\hv,v).
\end{equation*}
Thus, we only need to study the upper bound of the Hausdorff dimension of the set
\begin{equation*}
\left\{(x, y) \in[0,1]^2: \hat{v}_{\beta_1, \beta_2}(x, y) \geq \hat{v}\right\}
\end{equation*}
when $0<\hat{v} \leq 1$.

Write $v_0:=\frac{\hat{v}}{1-\hat{v}}$. For any sufficiently small $\delta>0$ and integer $l \geq 1$, we define
\begin{equation*}
F_{\beta_1,\beta_2}(\hat{v},l, \delta)=\left\{(x, y) \in[0,1]^2: \hat{v}_{\beta_1,\beta_2}(x, y) \geq \hat{v}, v_0+(l-1)\delta \leq v_{\beta_1,\beta_2}(x, y) \leq v_0+l\delta\right\}
\end{equation*}
and
\begin{equation*}
F_{\beta_1,\beta_2}(\hat{v},\infty)=\left\{(x, y) \in[0,1]^2: \hat{v}_{\beta_1,\beta_2}(x, y) \geq \hat{v}, v_{\beta_1,\beta_2}(x, y)=\infty \right\} .
\end{equation*}
Fix $\delta>0$. It follows from \eqref{tm1.6} that
\begin{equation*}
\left\{(x, y) \in[0,1]^2: \hat{v}_{\beta_1,\beta_2}(x, y)\geq\hat{v}\right\}
=\left( \bigcup_{l=1}^{+\infty} F_{\beta_1,\beta_2}(\hat{v}, l, \delta)\right)\bigcup F_{\beta_1,\beta_2}(\hat{v},\infty).
\end{equation*}
Hence,
\begin{equation*}
\hdim \left\{(x, y) \in[0,1]^2: \hat{v}_{\beta_1,\beta_2}(x, y) \geq \hat{v}\right\} =\max\{ \sup _{l\geq 1} \hdim  F_{\beta_1,\beta_2}(\hat{v}, l, \delta),~ \hdim F_{\beta_1,\beta_2}(\hat{v},\infty)\}.
\end{equation*}
When $v_{\beta_1,\beta_2}(x, y)=\infty$, notice that
\begin{equation*}
\left\{(x, y) \in[0,1]^2: \hat{v}_{\beta_1,\beta_2}(x, y) \geq \hat{v}, v_{\beta_1,\beta_2}(x, y)=\infty \right\} \subset \left\{(x, y) \in[0,1]^2: v_{\beta_1,\beta_2}(x, y)=\infty \right\}.
\end{equation*}
By Theorem \ref{thm1}, we know that $\hdim \left\{(x, y) \in[0,1]^2: v_{\beta_1,\beta_2}(x, y)=\infty \right\}=0$. It follows that
\begin{equation*}
\hdim F_{\beta_1,\beta_2}(\hat{v},\infty)=0.
\end{equation*}
Therefore, for any $\delta>0$,
\begin{equation}\label{401}
\hdim \left\{(x, y) \in[0,1]^2: \hat{v}_{\beta_1,\beta_2}(x, y) \geq \hat{v}\right\} =\sup _{l\geq 1} \hdim  F_{\beta_1,\beta_2}(\hat{v}, l, \delta).
\end{equation}
Next, we show that
\begin{equation*}
\hdim \left\{(x, y) \in[0,1]^2: \hat{v}_{\beta_1,\beta_2}(x, y)\geq \hat{v}\right\} \le \sup\limits_{0\le v \le \infty} \hdim E_{\bt_1,\bt_2}(\hv,v).
\end{equation*}

We divide the proof into a sequence of claims.

\medskip

{\sc Claim 1.} For any $ 0<\hv\le 1,$ $l>1$ and $\delta>0,$ we have \begin{align}\label{eqf}
\hdim  F_{\beta_1,\beta_2}(\hat{v}, l,\delta)\le&
\min\{ \tilde{A}(\hat{v},v_0+l\delta),~\tilde{B}(\hat{v},v_0+l\delta)\} \cdot 1_{\beta_1^{1+v_0+l\delta}>\beta_2}\\
&+\min \{\tilde{A}(\hat{v},v_0+l\delta), \tilde{C}(\hat{v},v_0+l\delta)\} \cdot 1_{\beta_1^{1+v_0+l\delta} \leq \beta_2}, \nonumber
\end{align}
where
\begin{align*}
\tilde{A}(\hat{v},v_0+l\delta)=&\left(1+\log _{\beta_2} \beta_1\right)\frac{v_0+(l+2)\delta-\hat{v}[1+v_0+(l+1)\delta]}{(1+v_0+l\delta)(v_0+l\delta-\hat{v})}+1-\log _{\beta_2} \beta_1,\\
\tilde{B}(\hat{v},v_0+l\delta)=&\left(1+\log _{\beta_1} \beta_2\right)\frac{v_0+(l+2)\delta-\hat{v}[1+v_0+(l+1)\delta]}{(1+v_0+l\delta)(v_0+l\delta-\hat{v})},
\\
\tilde{C}(\hat{v},v_0+l\delta)=&\frac{v_0+(l+2)\delta-\hat{v}-\hat{v}[v_0+(l+1)\delta](1+\log _{\beta_1} \beta_2)}{(1+v_0+l\delta)(v_0+l\delta-\hat{v})}+1.
\end{align*}

Equation \eqref{eqf} can be proved by employing a method analogous to that in Section \ref{section1}; here we only outline the differences.

\textbullet~ For any $\varepsilon>0$, we have
\begin{equation*}
\sum_{i=1}^{k-1}\left(m_i-n_i\right) \geq\left[\frac{(v_0+(l+1)\delta) \hat{v}}{v_0+(l+2)\delta-\hat{v}}-\varepsilon\right] n_k
\end{equation*}
for sufficiently large $k$. We also have
\begin{equation*}
\begin{aligned}
D_{n_1, m_1 ; \cdots ; n_{k-1}, m_{k-1}}=\left\{(\omega, \nu) \in \Sigma_{\beta_1}^{n_k} \times \Sigma_{\beta_2}^{n_k}: \omega_{n_i+1}\right. & =\cdots=\omega_{m_i}=0, \text { and } \\
\nu_{n_i+1} & \left.=\cdots=\nu_{m_i}=0, \text { for all } 1 \leq i \leq k-1\right\} .
\end{aligned}
\end{equation*}

\textbullet~
Replace $\Omega$ by
\begin{equation*}
\begin{aligned}
\Omega=\left\{(\{n_k\},\{m_k\}):\right. & \eqref{Ppstn}~\text{holds}, ~\liminf_{k\rightarrow\infty} \frac{m_k-n_k}{n_{k+1}}\geq \hat{v}
\\& \left. \text{~and}~v_0+(l-1)\delta\leq \limsup_{k\rightarrow\infty} \frac{m_k-n_k}{n_k}\leq v_0+l\delta \right\} .
\end{aligned}
\end{equation*}

\textbullet~
The estimate
\begin{equation*}
\sharp D_{n_1, m_1 ; \cdots ; n_{k-1}, m_{k-1}} \leq\left(\beta_1-1\right)^{-2 k} \beta_2^{k\left(1+\log _{\beta_2} \beta_1\right)} \beta_2^{n_k\left(1+\log _{\beta_2} \beta_1\right)\left[\frac{v_0+(l+2)\delta-\hat{v}(1+v_0+(l+1)\delta)}{v_0+l\delta-\hat{v}}+\varepsilon\right]}
\end{equation*}
holds.

\medskip
{\sc Claim 2.} When $0\leq v_{\beta_1,\beta_2}(x, y)<\infty$, we aim to prove that
\begin{equation}\label{402}
\sup _{l\geq1} \hdim  F_{\beta_1,\beta_2}(\hat{v}, l,\delta)
\leq  \sup _{l\geq1} \hdim E_{\beta_1, \beta_2}\left(\hat{v}, v_0+l \delta\right)+2(1+\log _{\beta_1} \beta_2)\delta.
\end{equation}

To this end, we note the following inequalities
\begin{equation*}
\tilde{A}(\hat{v},v_0+l\delta)-A(\hat{v},v_0+l\delta)=\frac{(1+\log _{\beta_2} \beta_1)\delta}{(1+v_0+l\delta)(v_0+l\delta-\hat{v})}\leq 2(1+\log _{\beta_2} \beta_1)\delta,
\end{equation*}
\begin{equation*}
\tilde{B}(\hat{v},v_0+l\delta)-B(\hat{v},v_0+l\delta)=\frac{(1+\log _{\beta_1} \beta_2)\delta}{(1+v_0+l\delta)(v_0+l\delta-\hat{v})}\leq 2(1+\log _{\beta_1} \beta_2)\delta,
\end{equation*}
and
\begin{equation*}
\tilde{C}(\hat{v},v_0+l\delta)-C(\hat{v},v_0+l\delta)=\frac{\delta}{(1+v_0+l\delta)(v_0+l\delta-\hat{v})}
\leq 2\delta.
\end{equation*}

\medskip
{\sc Claim 3.} \begin{equation}\label{403}
\lim _{\delta \rightarrow 0} \sup _{l \geq 1} \hdim E_{\beta_1, \beta_2}(\hat{v}, v_0+l\delta)=\sup _{0 \le v \le \infty} \hdim E_{\beta_1, \beta_2}(\hat{v}, v).
\end{equation}

By the definition of the supremum, for any $\eta>0$, there exists $v^* \geq v_0$ such that
\begin{equation*}
\hdim E_{\beta_1, \beta_2}(\hat{v}, v^*)>\sup _{ v \geq v_0} \hdim E_{\beta_1, \beta_2}(\hat{v}, v))-\eta.
\end{equation*}
Choose a sufficiently small $\delta$ satisfying $\delta<\eta$ and let $l=\lfloor \frac{v^*-v_0}{\delta} \rfloor$. Then $v_0+l\delta\in[v^*-\delta,v^*]$. Since $\hdim E_{\beta_1, \beta_2}(\hat{v}, v^*)$ is continuous with respect to $v$, we obtain
\begin{equation*}
\hdim E_{\beta_1, \beta_2}(\hat{v}, v_0+l\delta)\geq \hdim E_{\beta_1, \beta_2}(\hat{v}, v^*)-\eta>\sup _{ v \geq v_0} \hdim E_{\beta_1, \beta_2}(\hat{v}, v))-2\eta.
\end{equation*}
Hence,
\begin{equation*}
\sup _{l\geq1} \hdim E_{\beta_1, \beta_2}(\hat{v}, v_0+l\delta)>\sup _{ v \geq v_0} \hdim E_{\beta_1, \beta_2}(\hat{v}, v))-2\eta.
\end{equation*}
By Theorem \ref{thm3}, $\hdim E_{\beta_1, \beta_2}(\hat{v}, v)=0$ for all $0\leq v<v_0$ , so
\begin{equation*}
\sup _{0 \leq v \leq \infty} \hdim E_{\beta_1, \beta_2}(\hat{v}, v)=\sup _{ v \geq v_0} \hdim E_{\beta_1, \beta_2}(\hat{v}, v).
\end{equation*}
Letting $\delta \rightarrow 0$(and correspondingly, $\eta\rightarrow 0$), we combine the above result with the trivial bound $\sup _{l\geq1} \hdim E_{\beta_1, \beta_2}(\hat{v}, v_0+l\delta)\leq \sup _{0 \leq v \leq \infty} \hdim E_{\beta_1, \beta_2}(\hat{v}, v)$ to get \eqref{403}.

Finally, combining \eqref{401}, \eqref{402} and \eqref{403}, and letting $\delta \rightarrow 0$, we conclude that
\begin{align*}
\hdim \left\{(x, y) \in[0,1]^2: \hat{v}_{\beta_1,\beta_2}(x, y)\ge\hv\right\}
&\le \lim_{\delta \to 0}\sup _{l \geq 1} \hdim F_{\beta_1, \beta_2}(\hat{v}, l, \delta) \\
&\leq \lim_{\delta \to 0}\left(\sup _{l \geq 1} \hdim E_{\beta_1, \beta_2}(\hat{v}, v_0+l\delta)+(1+\log _{\beta_1} \beta_2)\delta\right)\\
&= \sup _{0 \leq v \leq \infty} \hdim E_{\beta_1, \beta_2}(\hat{v}, v).
\end{align*}This completes the proof.

\section*{Acknowledgment}
The authors would like to express their sincere gratitude to Professor Baowei Wang (Huazhong University of Science and Technology) for his helpful suggestions, and to Professor Jian Xu (Huazhong University of Science and Technology) for his valuable guidance and insightful comments. They also wish to thank Dr. Jing Feng (Wuhan University of Science and Technology) for her continuous support and helpful feedback. This work is supported by National Key R\&D Program of China (No. 2024YFA1013700) and Natural Science Foundation of China (No. 12331005).

\renewcommand{\refname}{\centerline{{\Large References}}}

\end{document}